\newtheorem{thm}{Theorem}[section]
\newtheorem{cor}[thm]{Corollary}
\newtheorem{lem}[thm]{Lemma}
\newtheorem{prop}[thm]{Proposition}
\theoremstyle{definition}
\newtheorem{defi}[thm]{Definition}
\newtheorem{ex}[thm]{Example}
\newtheoremstyle{rmk}
  {12pt}                   
  {12pt}                   
  {}                       
  {}                       
  {\normalfont\bfseries}   
  {.}                      
  {\newline}               
  {}
\theoremstyle{rmk}
\newtheorem{rmk}[thm]{Remark}
\newtheorem{rmk/defi}[thm]{Remark/Definition}
\newcommand{\N} { \mathbb{N} }  
\newcommand{\R} { \mathbb{R} }
\newcommand{\holim}{\textup{holim}}
\newcommand{\hocolim}{\textup{hocolim}}
\newcommand{\op}{\textup{op}}
\newcommand{\hodim}{\textup{hodim}}
\newcommand{\Tot}{\textup{Tot}}
\newcommand{\holimsub}[1]{\begin{array}[t]{cc} \textup{holim} \\ [-1mm]
\scriptstyle{#1} \end{array}}
\newcommand{\hocolimsub}[1]{\begin{array}[t]{cc} \textup{hocolim} \\
[-1.2mm] \scriptstyle{#1} \end{array}}
\begin{document}\parindent 0pt 

\title{Manifold Calculus adapted for Simplicial Complexes}

\vspace{0,3cm}

\author{Steffen Tillmann}
\address{Math.~Institut, Universit\"at M\"unster, 48149 M\"unster, Einsteinstrasse 62, Germany}
\email{s_till05@uni-muenster.de}

\begin{abstract} 
We develop a generalization of manifold calculus in the sense of Goodwillie-Weiss where the manifold is replaced by a simplicial complex. We consider functors from the category of open subsets of a fixed simplical complex into the category of topological spaces and prove an analogue of the approximation theorem. Namely, under certain conditions such a functor can be approximated by a tower of (appropriately adapted) polynomial functors.
\end{abstract}
\maketitle

\tableofcontents\setcounter{tocdepth}{3}
\addtocounter{section}{-1}

\section{Introduction}

Let $K$ be a simplicial complex\footnote[1]{By simplicial complex we mean the geometric realization of a simplical complex.} and let $\mathcal{O}(K)$ be the category of open subsets of $K$ and inclusions between open subsets. 
Then we consider 
contravariant functors $F$ from $\mathcal{O}(K)$ to the category of topological spaces $(Top)$. Such a functor $F$ is called \textit {good } if it takes stratified isotopy equivalences to weak equivalences and if it fulfils the (co)limit axiom. Roughly speaking, a stratified isotopy equivalence is a simplexwise smooth isotopy equivalence (for a precise Definition see \ref{isotopyequ}). This notion emphasizes the important property of the simplicial complex that each stratum carries a smooth structure, but note that when $K$ comes from a smooth triangulation of a smooth manifold, the notion of stratified isotopy equivalence does not agree with the usual notion of isotopy equivalence. \\

We will define the Taylor approximations $T_{k}F$ of a good contravariant functor $F$ and show that they are appropriate approximations to $F$ under certain additional conditions. In order to define these functors $T_{k}F$, we have to introduce a category of special open subsets $\mathcal{O}k(K)$ depending on an integer $k\geq 0$.
We define $\mathcal{O}k(K)$ to be a full subcategory of $\mathcal{O}(K)$. The objects are those open subsets of $K$ with at most $k$ connected components where each component is stratified isotopy equivalent to an open star neighbourhood in $K$. Then $T_{k}F:\mathcal{O}(K)\rightarrow (Top)$ is given by
\begin{align}\label{intr}
T_{k}F(V) := \holimsub{U\in\mathcal{O}k(K), \hspace{1mm}U\subset V} F(U)
\end{align}
By analogy with manifold calculus, we can define \textit{$k$-polynomial} functors. One of the main results of this paper is that the functors $T_{k}F$ are $k$-polynomial (Corollary \ref{kpolyn}). Every $k$-polynomial functor has the property that it is determined by its restriction to the 
 subcategory $\mathcal{O}k(K)$ of $\mathcal{O}(K)$ (see Theorem \ref{F1F2}). \\

We have canonical natural transformations $F\rightarrow T_{k}F$ and restriction transformations $T_{k}F\rightarrow T_{k-1}F$ induced by the inclusions $\mathcal{O}(k-1)(K)\hookrightarrow \mathcal{O}k(K)$ for all $k\in\N$. This gives us a tower of functors - by analogy we call it the \textit{Taylor tower} - and a canonical natural transformation
\[
F \rightarrow T_{\infty}F := \holim_{k}\hspace{1mm} T_{k}F
\]
By definition, the Taylor tower converges to $F$ if this map is a weak equivalence for every $V \in \mathcal O(K)$. We want to define conditions under which the tower converges. Therefore, we introduce the notion of a \textit{$\rho$-analytic} functor where  $\rho >0$ is an integer (see Definition \ref{nalytic}). Morally, $\rho$ is the radius of convergence. The following theorem is the main result of this paper. \\

\begin{thm}\label{maintheorem}
Let $\rho > \text{dim}(K)$ be an integer. If the functor $F$ is good and $\rho$-analytic, the canonical map 
\[
F(V)\rightarrow T_{\infty}F(V) = \holimsub{U\in \cup_{k} \mathcal{O}k(K), U\subset V} F(U)
\]
is a weak equivalence for all $V\in\mathcal{O}(K)$.
\end{thm}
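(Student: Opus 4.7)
The plan is to establish the stronger quantitative statement that the map $F(V) \to T_k F(V)$ is $c(k)$-connected for some function $c$ tending to $\infty$ with $k$, for all $V \in \mathcal{O}(K)$. Since $T_\infty F(V) = \holim_k T_k F(V)$, this quantitative bound implies via the Milnor-type tower short exact sequence that $F(V) \to T_\infty F(V)$ is a weak equivalence, which is what the theorem asserts. This mirrors the overall strategy of the Goodwillie--Weiss convergence theorem for ordinary manifold calculus.

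I would first dispense with the base case: whenever $U$ is itself an object of $\mathcal{O}j(K)$ for some $j$, the map $F(U)\to T_k F(U)$ is a weak equivalence for every $k\geq j$. This should follow because $U$ is essentially terminal in the indexing category $\{U' \in \mathcal{O}k(K) : U'\subset U\}$, up to a cofinality argument using stratified isotopy invariance, so that the homotopy limit defining $T_k F(U)$ collapses to $F(U)$ by the (co)limit axiom of a good functor.

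For the inductive step I would cover a general $V$ by a finite collection of open star neighbourhoods and analyze $F(V)$ via the cubical diagram of intersections from this cover. The $\rho$-analyticity hypothesis should provide a connectivity bound for how well $F(V)$ is approximated by the total homotopy limit of this cubical diagram, and the same bound applies to $T_k F$ since $T_k F$ is itself good and $k$-polynomial. Comparing the two diagrams vertex by vertex reduces the problem on $V$ to the same problem on each intersection, at a fixed connectivity cost per step. A point of $K$ is contained in at most $\dim(K)+1$ open stars, so the cover has bounded combinatorial depth; iterating the refinement at most $\dim(K)+1$ times should make every intersection appearing in the final cubical diagram land in $\bigcup_k \mathcal{O}k(K)$, where the base case applies.

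The main obstacle I expect is controlling the intersections during this iterative refinement: pairwise intersections of open stars are not themselves open stars, so it takes genuine work to show that after finitely many refinements these intersections are stratified isotopy equivalent to disjoint unions of basic neighbourhoods in some $\mathcal{O}k(K)$. The hypothesis $\rho > \dim(K)$ enters exactly at this point as the numerical balance making the induction close: each layer of refinement costs at most $\dim(K)$ in connectivity (the depth of the cover) but gains $\rho$ from analyticity, so the net connectivity of $F(V)\to T_k F(V)$ grows linearly in $k$ rather than stabilizing, yielding convergence of the tower.
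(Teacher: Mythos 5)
Your overall frame --- proving that $F(V)\to T_kF(V)$ is $c(k)$-connected with $c(k)\to\infty$ and then passing to the limit of the tower --- is exactly the paper's strategy, and your base case (objects of $\mathcal{O}j(K)$ with $j\le k$ are essentially terminal in the indexing category, so $\eta_k$ is an equivalence there) is correct. But the inductive step has a genuine gap, and it is the central one. The $\rho$-analyticity hypothesis (Definition \ref{nalytic}) does not give a connectivity estimate for the intersection cube of an arbitrary cover by open stars. It only applies to cubes of the very special form $T\mapsto F(\text{int}(P\cup A_T))$ where $A_0,\dots,A_r$ are \emph{pairwise disjoint} compact codimension zero subobjects of controlled relative handle index; equivalently, to intersection cubes of covers $\{V\setminus A_i\}$ whose closed complements are pairwise disjoint. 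A cover by open stars is not of this form (the complements of distinct open stars overlap), so neither the analyticity of $F$ nor the $k$-polynomiality of $T_kF$ says anything about the cubical diagram you propose to compare vertex by vertex.

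There is a second, quantitative problem even if one could apply analyticity: your cover has a fixed finite size determined by $K$ and $V$, so the cubes appearing have bounded dimension, and the analyticity estimate $c+\sum_i(\rho-q_{A_i})$ for a cube of bounded dimension is a \emph{fixed} number. Iterating a bounded number of times (at most $\text{dim}(K)+1$, as you note) then yields a connectivity bound that does not grow with $k$, so the tower estimate stabilizes instead of diverging and you cannot conclude that $F(V)\to T_\infty F(V)$ is an equivalence. The growth in $k$ in the paper's proof comes from a different mechanism: one inducts on the handle index function of a tame open set $V$, chooses in each top-index handle $D^q\times D^{j-q}$ a family of $k$ \emph{parallel disjoint} thickened cocores $A_0,\dots,A_{k-1}$ (extended to collars in $V$), and applies analyticity to the resulting $k$-dimensional cube $T\mapsto F(V\setminus\cup_{i\in T}A_i)$; the sum $\sum_{i}(\rho-q)$ then has $k$ terms, producing the estimate $c+k(\rho-q)$, while $T_{k-1}F$ applied to the same cube is cartesian by polynomiality. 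Removing one $A_i$ strictly lowers the handle index in dimension $j$, which is what drives the induction; and $\rho>\text{dim}(K)$ enters because $q\le\text{dim}(K)$, so $\rho-q>0$ and the estimate grows linearly in $k$. This handle-theoretic construction (Sections 1.2 and 3.1 of the paper) is the missing idea; the Čech-style refinement of star covers cannot replace it.
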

\vspace{0,2cm}

The analogue of Theorem \ref{maintheorem} in the setting of Goodwillie-Weiss 
is proven by induction on the (relative) handle index of a compact, smooth codimension zero submanifold of $M$. In order to find an appropriate analogue of the handle index, we have to introduce a compact codimension zero subobject in a simplicial complex.
To this end, we use the smooth structure of each (open) simplex. So roughly speaking, we define a codimension zero subobject as well as its handle index simplexwise. 
In particular, we get a handle index function which assigns to each simplex its handle index. 
The handle index of a codimension zero subobject in a simplicial complex is then defined as the maximum of this function over its simplices. We will show that this notion is different from its analogue in a smooth manifold. For example, if $M$ is a compact smooth manifold, then ($M$ is a compact codimension zero subobject of itself and) in general the handle index of $M$ in the usual sense is not equal to the handle index of a fixed triangulation whereby $M$ is regarded as a simplicial complex. \\
As our main application we study occupants in simplicial complexes \cite{occupantsinsimplicialcomplexes}. Let $M$ be a smooth manifold and $K\subset M$ be a simplicial complex where each closed simplex is smoothly embedded in $M$. We look for a homotopical formula for $M\setminus K$ in terms of spaces $M\setminus T$ where $T$ is a finite subset of $K$. The finite subset $T\subset K$ could be regarded as a finite set of occupants. In the smooth setting, where $K$ is replaced by a smooth submanifold $L \subset M$, this has been done in \cite{occupants}.
But by studying occupants in simplicial complexes we are allowed to consider more general situations. This also leads directly to generalizations of results in \cite{confcat} and \cite{pontryjagin}. For more details of this application see Chapter \ref{occsection}. As another example we study spaces of stratified smooth embeddings from a simplicial complex $K$ into a smooth manifold $M$.   \\

Can we compare this new theory with the Goodwillie-Weiss manifold calculus? We have seen that some of the key definitions are very different. We need to consider stratified isotopy equivalences, redefine $\mathcal{O}k$ and adapt the definition of a  codimension zero subobject. So it may come as a surprise that this new version is a generalization of Goodwillie-Weiss manifold calculus in the following sense: Let $M$ be a smooth manifold and $F$ be a good functor from the category of open subsets of $M$ to the category of topological spaces. 
We can choose a triangulation of $M$. Then the $k$-th stage of the Goodwillie-Weiss tower as defined in \cite {mancal1} coincides (up to homotopy) with $T_{k}F$ as defined in (\ref{intr}) (see Theorem \ref{vergltt}). \\

Can we generalise the results here to more general situations?
In fact, the methods developed and results proved in this paper could also be adapted for stratified manifolds more generally. At the moment we have no applications of such an extended theory. So, for simplicity, we restrict ourselves to the case of simplicial complexes which is adequate for the applications we have in mind. \\

\textbf{Notation:} The category $(Top)$ is the category of topological spaces. By a simplex $S$ of a simplicial complex, we mean a nondegenerate closed simplex. For such a simplex $S$, we denote by $\op(S)$ the open simplex. For a positive integer $k$, we set $[k]:=\left\{0,1,...,k\right\}$.  \\

\textbf{Acknowledgment:} This paper is a part of the author's PhD thesis under the supervision of Michael Weiss. It is a pleasure to thank him for suggesting this interesting topic, supporting the author and improving an earlier draft of this version.

\section{Polynomial functors}

We start to adapt the basic definitions. We introduce good and $k$-polynomial functors as well as the category $\mathcal{O}k$ of special open subsets and study the relationship between them. To this end, we will also introduce a concept of handle index in a simplicial complex.

\subsection{Basic definitions}

Let $K$ be a simplicial complex. We define the category $\mathcal{O}=\mathcal{O}(K)$ as follows: The objects are the open subsets of $K$ and the morphisms are inclusions, i.e. for $U,V\in\mathcal{O}$ there is exactly one morphism $U\rightarrow V$ if $U\subset V$ and there are no morphisms otherwise. 
\begin{defi}
Let $U,V\in\mathcal{O}$ be open subsets and let $f_{0},f_{1}: U \rightarrow V$ be two maps such that $f_{i}|_{U\cap S}$ is a smooth embedding from $U\cap S$ into $V\cap S$ for all simplices $S$ of $K$ and $i=0,1$. We call $f_{0}$ and $f_{1}$ \textit{stratified isotopic} if there is a continuous map
$H:U\times \left[ 0,1 \right] \rightarrow V$ such that 
\[
H|_{(U\cap S)\times \left[ 0,1 \right]}: (U\cap S)\times \left[ 0,1 \right]\rightarrow (V\cap S)
\] 
is a smooth isotopy from $f_{0}|_{U\cap S}$ to $f_{1}|_{U\cap S}$ for all simplices $S$ of $K$. \\
Note: For an $n$-dimensional simplex $S$, we regard $U\cap S$ as a subspace in the euclidean space $\R^{n+1}$. 
\end{defi}
\begin{defi}\label{isotopyequ}
Let $U,V\in\mathcal{O}$ be two open subsets with $U\subset V$. The inclusion $i: U\rightarrow V$ is a \textit{stratified isotopy equivalence} if there is a map $e: V\rightarrow U$ such that 
$e|_{V\cap S}$ is an embedding from $V\cap S$ into $U\cap S$ for all simplices $S$ of $K$ and $i\circ e$, respectively $e\circ i$, is stratified isotopic to $id_{V}$, 
respectively $id_{U}$.
\end{defi}
\begin{defi}\label{good}
A contravariant functor $F: \mathcal{O}\rightarrow (Top)$ is \textit{good} if
\begin{enumerate}
\item $F$ takes stratified isotopy equivalences to weak homotopy equivalences
\item for every family $\left\{ V_{i} \right\}_{i\in \N}$ of objects in $\mathcal{O}$ with $V_{i}\subset V_{i+1}$ for all $i\in \N$, the following canonical map is a weak homotopy equivalence:
\begin{center}
$F(\cup_{i} V_{i}) \rightarrow \holim_{i}\hspace{1mm} F(V_{i})$
\end{center}
\end{enumerate}
\end{defi}

Recall: For a positive integer $k$, let $\mathcal{P}([k])$ be the power set of $[k]$. Then a functor from $\mathcal{P}([k])$ to $(Top)$ is a $k$-cube of spaces. 
\begin{defi}
Let $\chi$ be a cube of spaces. The \textit{total homotopy fiber} of $\chi$ is the homotopy fiber of the canonical map
\begin{align*}
\chi(\emptyset ) \rightarrow \holimsub{\emptyset\neq T\subset [k]} \hspace{0,15cm}\chi (T)
\end{align*}
If this map is a weak homotopy equivalence, we call the cube $\chi$ \textit{(weak homotopy) cartesian}.
\end{defi}
Now we are going to define polynomial functors. Therefore let $F$ be a good functor, let $V\in \mathcal{O}$ be an open subset of $K$, and let $A_{0},A_{1},...,A_{k}$ be pairwise disjoint closed subsets of $V$ (for a positive integer $k$). Define a $k$-cube by
\begin{align}\label{polcube}
T\mapsto F(V\setminus \cup_{i\in T} A_{i})
\end{align}
\begin{defi}\label{polynomial}
The functor $F$ is \textit{polynomial of degree} $\leq k$ if the $k$-cube defined in (\ref{polcube}) is cartesian for all $V\in\mathcal{O}$ and pairwise disjoint closed subsets $A_{0},A_{1},...,A_{k}$ of $V$.
\end{defi}
\begin{prop}
Let $F:\mathcal{O}\rightarrow (Top)$ be a good contravariant functor which is polynomial of degree $\leq k$. Then $F$ is also polynomial of degree $\leq k+1$. 
\end{prop}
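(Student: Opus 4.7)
The plan is to use the standard trick of splitting a $(k+1)$-cube into two $k$-cubes linked by a natural transformation. Let me sketch the steps.

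Given an open set $V \in \mathcal{O}$ and pairwise disjoint closed subsets $A_0, A_1, \ldots, A_{k+1}$ of $V$, I want to show that the $(k+1)$-cube $\chi(T) = F(V \setminus \cup_{i \in T} A_i)$, indexed by $T \subset [k+1]$, is cartesian. I will view $\chi$ as a map of $k$-cubes (indexed by $T \subset [k]$):
\[
\chi_0(T) = F(V \setminus \cup_{i \in T} A_i), \qquad \chi_1(T) = F\bigl((V \setminus A_{k+1}) \setminus \cup_{i \in T} A_i\bigr),
\]
with the natural transformation $\chi_0 \to \chi_1$ induced by the inclusions $(V \setminus A_{k+1}) \setminus \cup_{i\in T}A_i \hookrightarrow V \setminus \cup_{i\in T}A_i$ and the contravariance of $F$.

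First I would verify that each of $\chi_0$ and $\chi_1$ is cartesian by appealing to the degree-$\leq k$ hypothesis. For $\chi_0$ this is immediate, since $A_0, \ldots, A_k$ are pairwise disjoint closed subsets of $V \in \mathcal{O}$. For $\chi_1$, the open set in question is $V \setminus A_{k+1}$, which lies in $\mathcal{O}$ because $A_{k+1}$ is closed in the open set $V$, and the relevant subsets $A_0 \cap (V \setminus A_{k+1}) = A_0, \ldots, A_k \cap (V \setminus A_{k+1}) = A_k$ (using pairwise disjointness with $A_{k+1}$) remain pairwise disjoint and closed in $V \setminus A_{k+1}$. So by the hypothesis on $F$, $\chi_1$ is cartesian as well.

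Next I would invoke the standard homotopical fact that a $(k+1)$-cube, regarded as a map of $k$-cubes $\chi_0 \to \chi_1$, has total homotopy fiber equal to the homotopy fiber of the induced map $\mathrm{tfib}(\chi_0) \to \mathrm{tfib}(\chi_1)$ on total homotopy fibers. Since both total fibers are contractible by the previous step, the induced map between them is a weak equivalence, and the total homotopy fiber of $\chi$ is contractible. Thus $\chi$ is cartesian, and since $V$ and $A_0, \ldots, A_{k+1}$ were arbitrary, $F$ is polynomial of degree $\leq k+1$.

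There is no serious obstacle here: the only thing to be careful about is checking that the $A_i$ with $i \leq k$ really do constitute an admissible family of pairwise disjoint closed subsets inside the smaller open set $V \setminus A_{k+1}$, which is exactly what pairwise disjointness of the original $A_i$ provides. The rest is a formal consequence of the homotopy cofinality / composition property of total homotopy fibers of cubes, which is independent of the simplicial complex setting and applies verbatim from the manifold calculus argument.
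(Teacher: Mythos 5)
Your proof is correct and follows essentially the same route as the paper: both split the $(k+1)$-cube along the last coordinate into a map of $k$-cubes, apply the degree $\leq k$ hypothesis to each, and conclude by a formal cube lemma. The paper phrases the final step as "a square whose horizontal arrows are weak equivalences is a homotopy pullback," while you phrase it via total homotopy fibers; these are equivalent formulations of the same fact.
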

\begin{proof}
Let $V\in\mathcal{O}$ be an open subset and let $A_{0},A_{1},...,A_{k+1}$ be pairwise disjoint closed subsets of $V$. We have to show that the canonical map
\[
F(V) \rightarrow \holimsub{\emptyset \neq T \subset [k+1]} F(V\setminus A_{T})
\]
is a weak equivalence where $A_{T}:= \cup_{i\in T} A_{i}$. This is equivalent (see section 1 of \cite{goodwillie2}) to saying that the following commutative diagram is a homotopy pullback:
\begin{equation*}
\begin{gathered}
\xymatrix{
F(V) \ar[d] \ar[r] & \holim_{\emptyset\neq T\subset[k]} \hspace{1mm}F(V\setminus A_{T}) \ar[d] \\
F(V\setminus A_{k+1}) \ar[r] & \text{holim}_{\emptyset\neq T\subset[k]} \hspace{1mm}F(V\setminus (A_{T}\cup A_{k+1}))
}
\end{gathered}
\end{equation*}
By assumption, the horizontal arrows are weak equivalences. Therefore, the diagram is a homotopy pullback.
\end{proof}
Manifold calculus assigns a Taylor tower to each good contravariant functor (see \cite{mancal1}). More precisely: For a good functor $F$ there is a $k$-polynomial functor $T_{k}F$ for all $k$ which coincides with $F$ on a full subcategory of special open sets (depending on $k$). Our aim is to construct an analogous theory for simplicial complexes. To this end, we need the notation of a special open set. \\

Let $x\in K$ be given and let $\mathcal{S}_{x}$ be the open star of the open simplex containing $x$, i.e. $\mathcal{S}_{x} :=\cup_{S} \hspace{0,1cm} \op(S)$ where the union ranges over all closed simplices $S$ of $K$ such that $x$ is an element of $S$. 

\begin{defi}
For a positive integer $k$, we define a full subcategory $\mathcal{O}k(K) = \mathcal{O}k$ of $\mathcal{O}$. Its objects are the open subsets $V\subset K$ with the following properties: $V$ has at most $k$ connected components and for each component $V_{0}$ of $V$, there is an $x\in K$ such that $V_{0}\subset \mathcal{S}_{x}$ and the inclusion $V_{0}\rightarrow\mathcal{S}_{x}$ is a stratified isotopy equivalence. An element of $\mathcal{O}k$ (for some $k$) is called a \textit{special} \textit{open set}.
\end{defi}
\begin{rmk}\label{compI1}
By definition, up to stratified isotopy equivalence the category $\mathcal{O}1$ has as many objects as the simplicial complex $K$ has simplices.
\end{rmk}
We will work out the relationship between the category $\mathcal{O}k$ and polynomial functors of degree $\leq k$.

\subsection{Handle index in a simplicial complex}\label{handleif}

For a compact manifold, there is a concept of relative handle index (see \cite{mancal2}). 
Reminder: Given a manifold triad $Q$, there are boundary sets $\partial_{0}Q$ and $\partial_{1}Q$ and a corner set $\partial_{0}Q\cap\partial_{1}Q$. The \textit{relative handle index} of $Q$ is the smallest integer $q$ such that $Q$ can built from a collar on $\partial_{0}Q$ by attaching handles of index $\leq q$. If $Q$ is a collar on $\partial_{0}Q$, then the handle index is $-\infty$.

\begin{ex}
(1) Let $D^{n}:=\left\{ x\in \R^{n} \mid \left\|x\right\| \leq 1\right\}$ be the $n$-disk. Then, $Q:= D^{q}\times D^{j-q}$ is a manifold triad with boundary sets $\partial_{0}Q:=S^{q-1}\times D^{j-q}$ and $\partial_{1}Q:=D^{q}\times S^{n-q-1}$. The relative handle index is $q$. \\
(2) Let $M$ be a smooth manifold with boundary and $f:M\rightarrow \R$ be a smooth map such that $0$ is a regular value for $f$ and $f|_{\partial M}$. Then $Q:= f^{-1} \left(\left[0,\infty \right)\right)$ is a manifold triad with $\partial_{0} Q = \partial M\cap Q$. Every $Q \subset M$ which can be obtained in this way will be called \textit{codimension zero subobject in a manifold} (compare \cite[\S 0]{mancal2}).
\end{ex}

We need an analogous concept of codimension zero subobjects in simplicial complexes:

\begin{defi}
A subset $P \subset K$ is called a \textit{codimension zero subobject} if there is a map $f: K \rightarrow \R$ such that 
\begin{enumerate}
    \item $f|_{S} : S \rightarrow \R$ is smooth for all simplices $S$ of $K$
		\item $P:= f^{-1} \left(\left[0,\infty \right)\right)$
		\item for all simplices $S$ of $K$: $0$ is a regular value for $f|_{\op(S)}$
\end{enumerate}
Note that for every simplex $S$, $P \cap S$ is a manifold triad (in a non-smooth sense) with $\partial_{0} (P\cap S) = \partial S \cap P$.
\end{defi}

\begin{defi}\label{tame}
An open subset $V\in\mathcal{O}$ of $K$ is called \textsl{tame} if it is the interior of a codimension zero compact subobject $C$ of $K$.
\end{defi}

\textbf{Notation}: Let $K^{n}\subset K$ be the $n$-skeleton of $K$, i.e. $K^{n}$ is the union of all $m$-simplices of $K$ with $m\leq n$. For a subset $U\in K$ we set $U^{n}:= U\cap K^{n}$.

\begin{rmk}
Let $V\in\mathcal{O}$ be tame. Then $V$ satisfies the following condition: For all simplices $S_{u}$ and all subsimplices $S_{v}\subset S_{u}$, we have 
\[
\text{cl}(V\cap S_{v})=\text{cl}(V\cap S_{u})\cap S_{v}
\]
where for a subset $U$ of $K$, $\text{cl}(U)$ is the closure of $U$ in $K$.
This statement emphasizes an important property of tame open subsets. In particular, the set $\op(S)\subset K$ where $S$ is a simplex of $K$ need not be tame in $K$, even if it is open in $K$. 
\end{rmk}

Now we define the \textit{handle index function} $f_{V}:\N\rightarrow \N\cup\left\{ -\infty \right\}$ for a tame set $V\in\mathcal{O}$. By definition, $V$ is the interior of a compact codimension zero subobject $C$ of $K$. Define $C_{u}:=S_{u}\cap C$ for all simplices $S_{u}$ of $K$ and let $I$ be the finite set of all $u$ with $C_{u}\neq \emptyset$. Note: Every $C_{u}$ is a manifold triad. 

In more detail: Let $u\in I$ be given and let $S_{u}$ be an $n$-simplex. A closed simplex is a manifold with boundary. Therefore, $C_{u}$ is a compact manifold with corners. The boundary sets are given by $\partial_{0}C_{u}=\partial S_{u} \cap C_{u}$ and $\partial_{1}C_{u}$ is the closure of $\partial C_{u} \cap \op(S_{u})$ in $C_{u}$. Therefore, the corner set is given by $\partial_{0}C_{u}\cap\partial_{1}C_{u}= \partial (C_{u}\cap\partial S_{u})$. 

Choose a handle decomposition for $C_{u}$ relative to $\partial_{0} C_{u}$ and let $q_{u}$ be the handle index of $C_{u}$ relative to $\partial_{0} C_{u}$. Note:
\begin{align*}
\partial_{0} C_{u} = \partial S_{u} \cap C_{u} = K^{n-1} \cap C_{u} = C_{u}^{n-1}
\end{align*} 

\begin{defi}
We set $f_{V}(j):= \text{max}_{u\in I(j)} \hspace{0,15cm}q_{u}$ where $I(j)\subset I$ is the subset of all $u\in I$ such that $S_{u}$ is a $j$-simplex. If $I(j)=\emptyset$, we set $f_{V}(j):= -\infty$. \\
The function $f_{V}:\N\rightarrow \N\cup\left\{ -\infty \right\}$ is called the \textit{handle index function} of $V$ and the integer $q_{V}:= \text{max}_{j\in\N}\hspace{0,1cm} f_{V}(j)$ is called the \textit{handle index} of $V$.
\end{defi}

\begin{ex}
Let $K$ be an $1$-dimensional simplicial complex with four $0$-simplices $S_{0},S_{1},S_{2},S_{3}$ and three $1$-simplices $I_{1},I_{2},I_{3}$ which are defined by $I_{k}:=\left\{ S_{k-1},S_{k} \right\}$ for $k=1,2,3$. Then $K$ is identified with the interval $\left[ 0,3 \right]$ by the identifications $S_{l} = l$ for $l=0,1,2,3$ and $I_{k}=\left[k-1,k\right]$ for $k=1,2,3$. Let $V\in\mathcal{O}$ be a tame open set and $f_{V}$ be the handle index function. By definition, we have $f_{V}(j)=-\infty$ for all $j\geq 2$. \\
Let $V:= \left[ 0; 0.5 \right) \in\mathcal{O}$. The handle index function of $V$ is then given by $f_{V}(0)=0$ and $f_{V}(1)= -\infty$ because $V$ is a collar of the $0$-simplex $S_{0}$. \\
For $V:= \left( 1.2 ; 1.8 \right)$ we have $f_{V}(0)= -\infty$ and $f_{V}(1)=0$ because $V\cap K^{0} = \emptyset$. \\
Now we consider a more interesting example. Up to now we only considered special open sets, i.e. elements of $\mathcal{O}k$ for some $k$. Now we define the open set $W:= \left( 0.5 ; 2.5 \right)$ so that $W$ is not a special open set. Then the handle index function is given by $f_{W}(0)=0$, $f_{W}(1)=1$.
\end{ex}

\begin{ex}
Let $K$ be an $n$-simplex. Then $K\in\mathcal{O}$ is a tame open set. Therefore, we can consider the handle index function $f_{K}$. It is defined by $f_{K}(j)=j$ for all $0\leq j\leq n$ and $f_{K}(j)= -\infty$ for all $j>n$.
\end{ex}

\subsection{Polynomial functors and special open sets}

In manifold calculus it is shown that a polynomial functor is determined by its restriction to a selection of special open sets \cite[Theorem 5.1]{mancal1}. We can verify an analogous result by extending the proof of \cite[5.1]{mancal1}. Therefore, we need the following concept of a collar.

\begin{rmk/defi}\label{col}
Let $V\in\mathcal{O}$ be a tame open set such that there is an integer $j\leq \text{dim}(K)$ with $f_{V}(m)\leq 0$ for all $m>j$, let $S'$ be a $j$-simplex of $K$ and let $A\subset \op(S')$ be compact in the open simplex $\op(S')$. By definition of the handle index function, there is a closed subset $\text{col}_{V}(A)$ in $V$ - \textit{the collar of $A$ in $V$} - such that there are diffeomorphisms
\[ 
\text{col}_{V}(A)\cap S \cong A \times \left[0,1\right)^{n-j}
\]
for each $n$-simplex $S$ with $S'\subset S$, compatibly as $S$ runs through the simplices of $K$ with $S'\subset S$. What does \textit{compatibly} mean? If $S_{1}$ is a $n_{1}$-simplex and $S_{2}$ is a $n_{2}$-simplex of $K$ with $S'\subset S_{1}\subset S_{2}$, then the following diagram commutes
\begin{equation*}
\begin{gathered}
\xymatrix{
\text{col}_{V}(A)\cap S_{1} \ar[d]^{incl.} \ar[r]^{\cong} & A \times \left[0,1\right)^{n_{1}-j} \ar[d] \\
\text{col}_{V}(A)\cap S_{2} \ar[r]^{\cong} & A \times \left[0,1\right)^{n_{2}-j}
}
\end{gathered}
\end{equation*}
where the right vertical arrow is the canonical inclusion, in particular it is the identity in the first coordinate.  \\
Note that we constructed the collar $\text{col}_{V}(A)$ of $A$ in $V$ uniquely up to stratified isotopy equivalence. 
\end{rmk/defi}

\begin{thm}\label{F1F2}
Let $F_{1}\rightarrow F_{2}$ be a natural transformation between $k$-polynomial functors. If $F_{1}(V)\rightarrow F_{2}(V)$ is a weak equivalence for all $V\in\mathcal{O}k$, it is a weak equivalence for all $V\in\mathcal{O}$. 
\end{thm}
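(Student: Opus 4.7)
The plan is to adapt the proof of \cite[Theorem 5.1]{mancal1} from manifold calculus to the simplicial setting, with the handle index function $f_V$ from Section \ref{handleif} (in place of the usual manifold handle index) driving the induction.

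First I would reduce to tame open sets. Any $V \in \mathcal{O}$ can be exhausted by an increasing sequence of tame open sets $V_0 \subset V_1 \subset \cdots$ with $V = \bigcup_i V_i$ (e.g.\ via sub-level sets of a function which is smooth on each simplex and transverse to $0$ on each open simplex). Goodness of $F_1$ and $F_2$ gives $F_n(V) \simeq \holim_i F_n(V_i)$ for $n = 1, 2$, and a levelwise weak equivalence of towers induces one on homotopy limits. Hence we may assume $V$ is tame.

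For tame $V$ I would induct lexicographically on the pair $(q_V, N_V)$, where $q_V := \max_j f_V(j)$ is the handle index of $V$ and $N_V$ counts the simplices of $K$ realizing $q_V$ in $f_V$. \textbf{Base case} $q_V \leq 0$: then each simplexwise piece $C \cap S_u$ of the defining compact codimension zero subobject $C$ is, relative to $\partial_0 C_u$, a collar possibly augmented by $0$-handle disks. Combined across the stratified structure this forces $V$ to be a disjoint union of components, each stratified isotopy equivalent to an open star neighbourhood $\mathcal{S}_x$. Write $V = V_0 \sqcup \cdots \sqcup V_{m-1}$ and sub-induct on $m$. If $m \leq k$ then $V \in \mathcal{O}k$ and the hypothesis applies. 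If $m > k$, set $A_i := V_i$ (closed in $V$, since components of an open set are clopen) for $i = 0, \ldots, k$; by $k$-polynomiality,
\[
F_n(V) \;\simeq\; \holimsub{\emptyset \neq T \subset [k]} F_n(V \setminus A_T), \qquad n = 1, 2,
\]
and each $V \setminus A_T$ has $m - |T| < m$ components, so induction closes this case. \textbf{Inductive step} $q := q_V > 0$: pick a simplex $S'$ of $K$ witnessing the handle index $q$ and isolate a handle $H$ of relative index $q$ inside $V \cap S'$. Using Remark/Definition \ref{col}, extend a suitable closed subset $A \subset H$ (whose removal cancels this handle) coherently through all higher-dimensional simplices $S \supset S'$ via the collars. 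Produce $k+1$ pairwise disjoint parallel copies $A_0, \ldots, A_k$ and apply $k$-polynomiality,
\[
F_n(V) \;\simeq\; \holimsub{\emptyset \neq T \subset [k]} F_n(V \setminus A_T),
\]
so that each $V \setminus A_T$ (for $T \neq \emptyset$) has $(q_{V \setminus A_T}, N_{V \setminus A_T})$ strictly smaller than $(q_V, N_V)$, and the inductive hypothesis applies.

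The main obstacle is the geometric ``peel-off'' step in the inductive case: one must verify that $k+1$ pairwise disjoint parallel copies of the chosen subset $A$ can in fact be placed inside $V$ coherently with the stratified structure, and that each resulting $V \setminus A_T$ is again tame with strictly smaller $(q, N)$. When $q < \dim S'$ there is a genuine $(\dim S' - q)$-dimensional cocore direction inside $\op(S')$ providing the room for the parallel copies. The delicate case is $q = \dim S'$ (a top-index handle in its own simplex): one must exploit the collar factors $[0,1)^{\dim S - \dim S'}$ coming from incident higher-dimensional simplices $S \supset S'$, together with stratified isotopy invariance of $F_1$ and $F_2$, to produce and position the $k+1$ copies. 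Carrying out this geometry rigorously — and verifying preservation of tameness and the strict decrease in the induction parameter — is the main technical work.
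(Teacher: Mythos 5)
Your overall strategy matches the paper's: reduce to tame open sets via the (co)limit axiom, then induct on handle-index data, handling the base case by splitting off components and the inductive step by removing $k+1$ pairwise disjoint closed sets and invoking $k$-polynomiality. The substantive divergence is your induction scheme, and it is where the gap lies. You induct lexicographically on $(q_V, N_V)$ with $q_V$ the global maximum of $f_V$. But in the peel-off step the sets $A_i$ are unions of collars $\text{col}_V(A_i^u)$, which by construction extend into every simplex $S\supset S'$; if such an $S$ carries handles of positive index (merely $\leq q_V$), you have no control over what deleting these collars does to the handle decomposition of $V_T\cap S$ --- it could a priori create new handles of index $q_V$ there, so neither $q$ nor $N$ is guaranteed to decrease, and your acknowledged ``main technical work'' is genuinely problematic in this order. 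The paper's induction is arranged precisely to avoid this: the outer induction is on the threshold $j$ such that $f_V(m)\leq 0$ for all $m>j$, with an inner induction on $q=f_V(j)$, so that every incident higher-dimensional stratum is already a collar and visibly remains one after the removal, while $f(j)$ strictly drops. Your argument needs either this reordering or a separate lemma controlling the higher strata.

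A second, smaller point: the geometry of the $k+1$ copies is simpler than you fear, and your worry about the case $q=\dim S'$ is misplaced. The paper chooses $k+1$ distinct points $x_0^u,\dots,x_k^u$ in the interior of the \emph{core} $D^q$ (this is exactly where the hypothesis $q>0$ enters) and sets $A_i^u = h_u(x_i^u\times D^{j-q})\cap V$; the copies are parallel in the core direction, not the cocore direction, so no extra room from $\op(S')$ or from incident higher simplices is needed, and the case where the cocore is a point is no more delicate than any other. With the induction reordered as in the paper and the copies placed over distinct core points, your outline completes correctly.
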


\begin{proof}
Using the (co)limit axiom (the second property in Definition \ref{good}) it is enough to consider the tame open subsets. The general case follows by an inverse limit argument and by the goodness of $F_{1},F_{2}$. \\
Let $V\in\mathcal{O}$ be a tame open subset of $K$ and let $f_{V}:\N\rightarrow\N\cup\left\{-\infty\right\}$ be the handle index function of $V$. 
We induct on the following statement depending on $j$: The map $F_{1}(V)\rightarrow F_{2}(V)$ is a weak equivalence for all tame open sets $V\in\mathcal{O}$ with $f_{V}(m)\leq 0$ for all $m>j$. \\
The induction starts with the statement for $j=0$, i.e. $f_{V}(m)\leq 0$ for all $m\in\N$. This means that there is an integer $r$ such that $V\in\mathcal{O}r$. If $r\leq k$, then we have a weak equivalence $F_{1}(V)\rightarrow F_{2}(V)$ by assumption. If $r= k+1$, we can find exactly $k+1$ components $A_{0},...,A_{k}$ of $V$. For $T\subset [k]$ we define $V_{T}:= V\setminus \cup_{i\in T} A_{i}$. By assumption, the maps
\begin{align*}
F_{i}(V) \rightarrow \holimsub{\emptyset\neq T\subset[k]} F_{i}(V_{T})
\end{align*}
are weak equivalences for $i=1,2$. We consider the following commutative diagram
\begin{equation*}
\begin{gathered}
\xymatrix{
F_{1}(V) \ar[d] \ar[r] & \text{holim}_{\emptyset\neq T\subset[k]} F_{1}(V_{T}) \ar[d] \\
F_{2}(V) \ar[r] & \text{holim}_{\emptyset\neq T\subset[k]} F_{2}(V_{T})
}
\end{gathered}
\end{equation*}
The map $F_{1}(V_{T})\rightarrow F_{2}(V_{T})$ is a weak equivalence for every $\emptyset\neq T\subset[k]$ and thus we have proven that $F_{1}(V)\rightarrow F_{2}(V)$ is a weak equivalence for all $V\in\mathcal{O}(k+1)$. Likewise, we get weak equivalences $F_{1}(V)\rightarrow F_{2}(V)$ for all $V\in\mathcal{O}r$ and for all integers $r$. \\
Now we assume that the statements $0,1,2,...,j-1$ are proven and we suppose that $f_{V}(j)=q$ for a fixed integer $q>0$ and $f_{V}(m)\leq 0$ for all $m>j$. \\
Since $V$ is tame, there is a codimension zero compact subobject $C\subset K$ with $V=\text{int}(C)$. For every handle $Q_{u}$ of index $q$ which is a subset of a $j$-simplex $S_{u}$, choose a diffeomorphism 
\[
h_{u}: D^{q}\times D^{j-q} \rightarrow Q_{u}\subset C\cap S_{u} \subset C\cap K^{j}
\]
Since $q>0$, there are distinct points $x_{0}^{u},...,x_{k}^{u}$ in the interior of $D^{q}$. We set
\begin{align*}
A_{i}^{u}:= h_{u}(x_{i}^{u}\times D^{j-q})\cap V 
\end{align*}

Define $A_{i}$ to be the union of all collars $\text{col}_{V}(A_{i}^{u})$ of $A_{i}^{u}$ for arbitrary $u$. \\
By definition, $A_{i}$ is a closed subset of $V$ for each $i$. If we set $V_{T}:=V\setminus\cup_{i\in S}A_{i}$ for $\emptyset\neq T\subset[k]$, then $V_{T}$ is a tame open set with $f_{V}(j)<q$ and $f_{V}(m)=0$ for all $m>j$. We can use the induction hypothesis and we deduce that the map $F_{1}(V_{T})\rightarrow F_{2}(V_{T})$ is a weak equivalence for all $\emptyset\neq T\subset[k]$. Consider the commutative square
\begin{equation*}
\begin{gathered}
\xymatrix{
F_{1}(V) \ar[d] \ar[r] & \text{holim}_{\emptyset\neq T\subset[k]} F_{1}(V_{T}) \ar[d] \\
F_{2}(V) \ar[r] & \text{holim}_{\emptyset\neq T\subset[k]} F_{2}(V_{T})
}
\end{gathered}
\end{equation*}
We have shown that the right vertical arrow is a weak equivalence. The horizontal arrows are also weak equivalences since $F_{1}$ and $F_{2}$ are $k$-polynomial. By the commutativity of the diagram, the left vertical arrow is a weak equivalence. By induction on $q$, the statement $j$ is proven. And again by induction (on $j$), the map $F_{1}(V)\rightarrow F_{2}(V)$ is a weak equivalence for all tame open sets $V\in\mathcal{O}$. 
\end{proof}

\section{Taylor tower}

Let $F$ be a good contravariant functor from $\mathcal{O}$ to $(Top)$. In this section we will define the Taylor tower of $F$ by analogy with the Taylor tower in manifold calculus. Most of the ideas of the proof are not new and can be found in \cite[\S 3 and \S 4]{mancal1}. After introducing it, we will show that the new Taylor tower generalizes the Taylor tower in the sense of manifold calculus.

\subsection{Double categories}

We give a brief introduction on double categories, for more details we refer to \cite[ 12.1]{maclane}. A \textit{double category} (or \textit{internal category}) $\mathcal{C}=(\mathcal{C}_{0},\mathcal{C}_{1},i,s,t,\circ )$ consists of two categories $\mathcal{C}_{0}$ and $\mathcal{C}_{1}$ and four functors $i:\mathcal{C}_{0}\rightarrow \mathcal{C}_{1}$ (inclusion functor), $s:\mathcal{C}_{1}\rightarrow \mathcal{C}_{0}$ (source), $t:\mathcal{C}_{1}\rightarrow \mathcal{C}_{0}$ (target) and $\circ : \mathcal{C}_{1} \times_{\mathcal{C}_{0}} \mathcal{C}_{1} \rightarrow \mathcal{C}_{1}$ (composition functor) where $\mathcal{C}_{1} \times_{\mathcal{C}_{0}} \mathcal{C}_{1}$ denotes the pullback of the pullback square
\begin{equation*}
\begin{gathered}
\xymatrix{
\mathcal{C}_{1} \times_{\mathcal{C}_{0}} \mathcal{C}_{1} \ar[d] \ar[r] & \mathcal{C}_{1} \ar[d]^{s} \\
\mathcal{C}_{1} \ar[r]^{t} & \mathcal{C}_{0}
}
\end{gathered}
\end{equation*}
The four functors have to fulfil various relations. \\
If $\mathcal{C}$ is a double category, its \textit{nerve} $\left|\mathcal{C}\right|$ is defined to be a bisimplicial set in the obvious way. \\
Let $\mathcal{C}=(\mathcal{C}_{0},\mathcal{C}_{1},i,s,t,\circ )$ and $\mathcal{C}'=(\mathcal{C}_{0}',\mathcal{C}_{1}',i',s',t',\circ ' )$ be two double categories. A \textit{double functor} (or \textit{internal functor}) $F:\mathcal{C}\rightarrow\mathcal{D}$ is a pair of functors $F_{0}:\mathcal{C}_{0}\rightarrow\mathcal{D}_{0}$ and $F_{1}:\mathcal{C}_{1}\rightarrow\mathcal{D}_{1}$ that fulfil the expected relations.

\begin{ex}
We consider $[p]$ as a category: The objects are the elements of $[p]$. For $p_{1},p_{2}\in[p]$, there is exactly one morphism $p_{1}\rightarrow p_{2}$ if $p_{1}\leq p_{2}$, otherwise the morphism-set is empty. Then we can consider $[p]\times[q]$ as a double category: $C_{0}$ is the category where the objects are the elements of $[p]\times[q]$ and the morphisms are the horizontal arrows, i.e. they do not change the second coordinate. The objects of $C_{1}$ are the vertical morphisms in $[p]\times[q]$ which do not change the first coordinate and the morphisms of $C_{1}$ are commutative squares
\begin{equation*}
\begin{gathered}
\xymatrix{
(p_{1},q_{1}) \ar[d] \ar[r] & (p_{2},q_{1}) \ar[d] \\
(p_{1},q_{2}) \ar[r] & (p_{2},q_{2})
}
\end{gathered}
\end{equation*}
where $p_{1}\leq p_{2}$ and $q_{1}\leq q_{2}$, i.e. the vertical arrows are morphisms in $C_{0}$.
\end{ex}

\begin{ex}\label{doubleDC}
Let $\mathcal{C}$ be an arbitrary category and let $\text{ar}(\mathcal{C})$ be the arrow category of $\mathcal{C}$. More precisely, the objects of the arrow category of $\mathcal{C}$ are the morphisms in $\mathcal{C}$ and a morphism between two objects $f:x\rightarrow y$ and $g:z\rightarrow w$ of $\text{ar}(\mathcal{C})$ is a commutative square in $\mathcal{C}$
\begin{equation*}
\begin{gathered}
\xymatrix{
x \ar[d] \ar[r]^{f} & y \ar[d] \\
z \ar[r]^{g} & w
}
\end{gathered}
\end{equation*}
Now we have a double category $(\mathcal{C},\text{ar}(\mathcal{C}),i,s,t,\circ)$ where $i$ maps an object of $\mathcal{C}$ to its identity morphism and $s,t,\circ$ are the usual source-, target- and composition functor. \\
More generally, given a category $\mathcal{C}$ and subcategory $\mathcal{D}$ containing all objects of $\mathcal{C}$. We define the category $\text{ar}_{\mathcal{D}}(\mathcal{C})$ as follows: The objects are the morphisms in $\mathcal{C}$ and a morphism between two objects $f:x\rightarrow y$ and $g:z\rightarrow w$ of $\text{ar}_{\mathcal{D}}(\mathcal{C})$ is a commutative square
\begin{equation*}
\begin{gathered}
\xymatrix{
x \ar[d] \ar[r]^{f} & y \ar[d] \\
z \ar[r]^{g} & w
}
\end{gathered}
\end{equation*}
where the vertical arrows are morphisms in $\mathcal{D}$. Then we have a double category $(\mathcal{D},\text{ar}_{\mathcal{D}}(\mathcal{C}),i,s,t,\circ)$ where $i$ maps an object of $\mathcal{C}$ to its identity morphism and $s,t,\circ$ are the usual source-, target- and composition functor. 
We denote this double category by $\mathcal{D}\mathcal{C}$.
\end{ex}

The next two lemmas are proven in \cite[Lemma 3.3 + 3.4]{mancal1}.
\begin{lem}
The inclusions of nerves $\left|\mathcal{C}\right|\rightarrow \left|\mathcal{D}\mathcal{C}\right|$ is a weak equivalence.
\end{lem}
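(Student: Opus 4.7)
The proof relies on identifying the bisimplicial nerve $|\mathcal{DC}|$ and reducing the claim to a levelwise statement in the horizontal direction. With the conventions fixed above, the $(p,q)$-simplices of $|\mathcal{DC}|$ are commutative $[p] \times [q]$-diagrams in $\mathcal{C}$ whose arrows in the $[p]$-direction lie in $\mathcal{D}$. Fixing $p$ and varying $q$, this simplicial set is the ordinary nerve of the category $\mathcal{D}_p := \text{Fun}^{\mathcal{D}}([p], \mathcal{C})$ whose objects are $p$-chains in $\mathcal{D}$ and whose morphisms are natural transformations with components in $\mathcal{C}$ (not required to lie in $\mathcal{D}$). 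In particular $\mathcal{D}_0 = \mathcal{C}$, so $|\mathcal{C}| = B\mathcal{D}_0$, and the inclusion $|\mathcal{C}| \hookrightarrow |\mathcal{DC}|$ is induced, level by level, by the constant-chain functors
\[
F_p: \mathcal{C} \to \mathcal{D}_p, \qquad c \mapsto \bigl(c \xrightarrow{\text{id}} c \xrightarrow{\text{id}} \cdots \xrightarrow{\text{id}} c\bigr).
\]

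The main step is to show that each $BF_p: B\mathcal{C} \to B\mathcal{D}_p$ is a weak homotopy equivalence. I plan to verify that $F_p$ admits a right adjoint $L_p: \mathcal{D}_p \to \mathcal{C}$ given by $L_p(d_0 \xrightarrow{f_0} \cdots \xrightarrow{f_{p-1}} d_p) := d_0$. The adjunction bijection $\text{Hom}_{\mathcal{D}_p}(F_p c, d_\bullet) \cong \text{Hom}_{\mathcal{C}}(c, d_0)$ is immediate: a natural transformation from the constant chain at $c$ is determined by its $0$-th component $u_0: c \to d_0$ in $\mathcal{C}$, since the commutativity relations force $u_{i+1} = f_i \circ u_i$. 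The unit of this adjunction is the identity, giving $L_p F_p = \text{id}_{\mathcal{C}}$, while the counit is a natural transformation $F_p L_p \Rightarrow \text{id}_{\mathcal{D}_p}$ with $i$-th component $f_{i-1} \circ \cdots \circ f_0: d_0 \to d_i$. Since natural transformations induce homotopies on classifying spaces, $BF_p$ and $BL_p$ become mutually inverse weak equivalences.

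To conclude, I invoke the standard principle that a map of bisimplicial sets which is a weak equivalence levelwise in one direction induces a weak equivalence on geometric realizations (equivalently, on diagonals). Applied to the bisimplicial map from the constant-in-$p$ bisimplicial set $(p, q) \mapsto N_q(\mathcal{C})$, whose realization is $|\mathcal{C}|$, into $|\mathcal{DC}|$, this yields the desired weak equivalence $|\mathcal{C}| \to |\mathcal{DC}|$.

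The main obstacle is getting the bisimplicial conventions straight and correctly identifying $\mathcal{D}_p$ — especially the point that morphisms of $\mathcal{D}_p$ are allowed to have components in the full category $\mathcal{C}$ (this is precisely what makes the argument work for $|\mathcal{C}|$ and fail for the analogous inclusion $|\mathcal{D}| \to |\mathcal{DC}|$, since there the $\mathcal{D}$-component restriction obstructs the adjunction computation). Once the groundwork is in place, both the adjunction argument and the bisimplicial realization lemma are off the shelf.
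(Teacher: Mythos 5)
Your proof is correct and takes essentially the same route as the source the paper relies on: the paper does not prove this lemma itself but defers to \cite[Lemma 3.3]{mancal1}, where the argument is exactly yours --- identify the fixed-$p$ slice of the bisimplicial nerve as the nerve of the category of $p$-chains in $\mathcal{D}$ with $\mathcal{C}$-ladders as morphisms, observe that the constant-chain functor is an adjoint (so induces an equivalence of classifying spaces), and conclude with the realization lemma for bisimplicial sets. You also correctly isolate the one point where care is needed, namely that the levelwise argument must be run in the $\mathcal{D}$-direction, since slicing the other way fails already for $q=0$.
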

\begin{rmk}
We will need the totalization of a bicosimplicial space. Firstly, we would like to remind the reader that the totalization of a cosimplicial space $C^{\bullet}$ is just the space of natural transformations from the cosimplicial space $\Delta^{\bullet}$ to $C^{\bullet}$. \\
Let $B^{\bullet,\bullet}$ be a bicosimplicial space. Then the totalization of $B^{\bullet,\bullet}$ is the space of natural transformations from the bicosimplicial space $\Delta^{\bullet}\times \Delta^{\bullet}$ to $B^{\bullet,\bullet}$.
\end{rmk}
Let $F$ be a double functor from a double category $\mathcal{A}$ to the double category $(Top)(Top)$ (compare Example \ref{doubleDC}). Then we define the homotopy limit $\holim_{\mathcal{A}} F$ as the totalization of the bicosimplicial space
\[
(p,q)\mapsto \prod_{H:[p]\times[q]\rightarrow\mathcal{A}} F(H(p,q))
\]
where the product ranges over all double functors $H$ from $[p]\times[q]$ to $\mathcal{A}$. \\

Now let $F$ be a functor from the category $\mathcal{C}$ to $(Top)$. Then $F$ can also be considered as a double functor from the double category $\mathcal{D}\mathcal{C}$ to $(Top)(Top)$.
\begin{lem}\label{prdb}
If $F$ takes all morphisms in $\mathcal{D}$ to weak equivalences, the projection map
\[
\holim_{\mathcal{D}\mathcal{C}} F \rightarrow \holim_{\mathcal{C}} F
\]
is a weak equivalence.
\end{lem}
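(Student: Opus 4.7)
The plan is a Fubini-style reduction on the bicosimplicial space defining $\holim_{\mathcal{D}\mathcal{C}} F$. Writing
\[
\holim_{\mathcal{D}\mathcal{C}} F = \Tot_{p,q}\, B^{p,q}, \qquad B^{p,q} = \prod_{H: [p]\times[q] \to \mathcal{D}\mathcal{C}} F(H(p,q)),
\]
the $p = 0$ slice $B^{0, \bullet}$ coincides with the cosimplicial space computing $\holim_{\mathcal{C}} F$, and the projection in the lemma is induced by restriction to this slice. After an objectwise fibrant replacement of $F$ (so that the bicosimplicial space is Reedy fibrant and iterated totalizations behave well), the strategy is to show that for each fixed $q$ the natural map $\Tot_p B^{p, q} \to B^{0, q}$ is a weak equivalence; taking $\Tot_q$ will then yield the lemma.

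For the row-wise step, a double functor $H: [p] \times [q] \to \mathcal{D}\mathcal{C}$ corresponds to an ordinary functor $[p] \times [q] \to \mathcal{C}$ whose horizontal arrows lie in the subcategory $\mathcal{D}$, equivalently to a $p$-chain in the category $\mathcal{E}$ whose objects are $q$-chains in $\mathcal{C}$ and whose morphisms are $\mathcal{D}$-componentwise natural transformations. Under this identification, $\Tot_p B^{p, q}$ computes a homotopy limit over $\mathcal{E}$ of the auxiliary functor $G_q : \sigma \mapsto F(\sigma(q))$, while $B^{0, q} = \prod_{\sigma} G_q(\sigma)$ is the product over the objects of $\mathcal{E}$. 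Since every morphism of $\mathcal{E}$ has its components in $\mathcal{D}$, the hypothesis on $F$ ensures that $G_q$ sends all morphisms of $\mathcal{E}$ to weak equivalences. A comparison argument in the spirit of the previous lemma -- which supplies the bare nerve equivalence $|\mathcal{C}| \to |\mathcal{D}\mathcal{C}|$ -- lifted to $(Top)$-valued diagrams via this $\mathcal{D}$-inversion hypothesis should then identify $\Tot_p B^{p, q}$ with $B^{0, q}$ up to weak equivalence.

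The main obstacle will be making this ``coefficient'' lift precise. The cosimplicial structure in $p$ does not respect the decomposition of $p$-chains in $\mathcal{E}$ by their starting object (the coface $d^0$ drops this starting object), so the collapse cannot be carried out levelwise and must instead be argued at the level of homotopy limits. The cleanest route is likely a general principle for homotopy limits of functors that invert all morphisms in a given indexing category -- namely, that such a homotopy limit depends only on the weak homotopy type of the localized nerve -- combined with the nerve equivalence from the previous lemma; alternatively, one can pursue a direct Bousfield-Kan spectral sequence argument comparing the two bicosimplicial (respectively cosimplicial) spaces, following the proof of the analogous statement in \cite{mancal1}.
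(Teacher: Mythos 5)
The paper does not actually prove this lemma --- it is quoted from \cite[Lemmas 3.3 and 3.4]{mancal1} --- so your proposal must be judged against the standard argument, and its central step has a genuine gap. You correctly identify $\Tot_p B^{p,q}$ (for fixed $q$) with $\holim_{\mathcal{E}_q} G_q$, where $\mathcal{E}_q$ is the category of $q$-chains in $\mathcal{C}$ with $\mathcal{D}$-componentwise natural transformations, and you correctly note that $G_q$ inverts every morphism of $\mathcal{E}_q$. But the map $\Tot_p B^{p,q}\to B^{0,q}$ is then the projection $\holim_{\mathcal{E}_q}G_q\to\prod_{\text{ob}(\mathcal{E}_q)}G_q$ from a homotopy limit to the product over the \emph{objects} of the indexing category, and inverting all morphisms does not make this an equivalence: by Proposition \ref{holimassfib} the left side is the space of sections of the associated fibration over the whole classifying space $\left|\mathcal{E}_q\right|$, while the right side is the space of sections over the discrete set of objects. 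Already at $q=0$ in the intended application ($\mathcal{C}=\epsilon\mathcal{O}k(V)$, $\mathcal{D}=\mathcal{E}_0=\epsilon\mathcal{I}k(V)$), Lemma \ref{configurations} shows $\left|\mathcal{E}_0\right|$ is a union of configuration spaces, so restriction to the object set is far from an equivalence; the minimal abstract counterexample is $\mathcal{C}=\mathcal{D}$ a one-object groupoid, where your claimed equivalence reads $X^{hG}\simeq X$. Your fallback (``holim of an everything-inverting diagram depends only on the localized nerve'' plus $\left|\mathcal{C}\right|\simeq\left|\mathcal{D}\mathcal{C}\right|$) fails for the same structural reason: it would present $\holim_{\mathcal{C}}F$ as a section space over $\left|\mathcal{C}\right|$, which requires $F$ to invert \emph{all} morphisms of $\mathcal{C}$, not only those of $\mathcal{D}$.

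The workable argument slices the bicosimplicial space the other way. For fixed $p$, the cosimplicial space $q\mapsto B^{p,q}$ is the cosimplicial replacement of $\holim_{\mathcal{A}_p}\Psi_p$, where $\mathcal{A}_p$ has as objects the $p$-chains in $\mathcal{D}$ and as morphisms the ladders with arbitrary rungs in $\mathcal{C}$, and $\Psi_p(\sigma)=F(\sigma(p))$; note $\mathcal{A}_0=\mathcal{C}$ and $\Psi_0=F$, so the $p=0$ level of $p\mapsto\Tot_q B^{p,q}$ is exactly $\holim_{\mathcal{C}}F$ and the lemma's projection is evaluation at this level. The constant-chain functor $\mathcal{C}\to\mathcal{A}_p$ is adjoint to evaluation at an end of the chain, hence homotopy initial, so restriction along it identifies $\holim_{\mathcal{A}_p}\Psi_p$ with $\holim_{\mathcal{C}}F$; the hypothesis that $F$ inverts $\mathcal{D}$-morphisms is what makes these identifications compatible with all coface and codegeneracy maps, since the cofaces that reindex within a chain involve the within-chain morphisms $\sigma(i)\to\sigma(i+1)$, which lie in $\mathcal{D}$. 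One concludes that $p\mapsto\Tot_q B^{p,q}$ is a homotopy-constant (Reedy fibrant, after the replacement you already propose) cosimplicial space, so its totalization maps by a weak equivalence onto its zeroth level. In short: the direction you chose to collapse first is the one in which the hypothesis on $F$ gives you nothing levelwise; collapsing the $\mathcal{D}$-direction last, via the adjunction, is where the hypothesis does its work.
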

Let $\mathcal{C}$ be a small category and $\mathcal{D}$ be a subcategory containing all objects of $\mathcal{C}$. Then for every $p\geq 0$, we introduce a new category $\mathcal{D}\mathcal{C}_{p}$: the objects are functors $G : \left[ p \right] \rightarrow \mathcal{C}$ and the morphisms are double functors $\left[ 1 \right] \times \left[ p \right] \rightarrow \mathcal{D}\mathcal{C}$. 

\begin{lem}\label{dftcs}
Let $F$ be a double functor from $\mathcal{D}\mathcal{C}$ to $(Top)(Top)$. There is an isomorphism between $\holim_{\mathcal{D}\mathcal{C}} F$ and the totalization of the cosimplicial space
\begin{align*}
p\mapsto \holimsub{G:\left[ p \right] \rightarrow \mathcal{C}} F ( G(p) )
\end{align*}
where the homotopy limit ranges over all $G\in \mathcal{D}\mathcal{C}_{p}$.
\end{lem}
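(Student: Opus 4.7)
The plan is to unfold $\holim_{\mathcal{D}\mathcal{C}} F$ as the totalization of a bicosimplicial space, re-index that bicosimplicial space using the definition of $\mathcal{D}\mathcal{C}_q$, and then iterate the totalization (Fubini) to recognize the result as the cosimplicial formula in the statement.

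By the definition recalled just before the lemma, $\holim_{\mathcal{D}\mathcal{C}} F = \Tot B$, where
\[
B^{p,q} := \prod_{H:[p]\times[q]\to\mathcal{D}\mathcal{C}} F(H(p,q)).
\]
Unpacking the double category $\mathcal{D}\mathcal{C}$ (horizontal morphisms in $\mathcal{D}$, vertical morphisms in $\mathcal{C}$) together with the definition of $\mathcal{D}\mathcal{C}_q$ (objects: functors $[q]\to\mathcal{C}$; morphisms: double functors $[1]\times[q]\to\mathcal{D}\mathcal{C}$), I would establish a natural bijection
\[
\{H:[p]\times[q]\to\mathcal{D}\mathcal{C}\}\;\longleftrightarrow\;\{G:[p]\to\mathcal{D}\mathcal{C}_q\}.
\]
The point is that a functor $[p]\to\mathcal{D}\mathcal{C}_q$ is a $[p]$-chain of $[q]$-columns in $\mathcal{C}$ glued by squares whose horizontal sides are in $\mathcal{D}$, and that is precisely the data of a double functor $[p]\times[q]\to\mathcal{D}\mathcal{C}$. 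Under this bijection $H(p,q)$ corresponds to $G(p)(q)$, so
\[
B^{p,q} \;=\; \prod_{G:[p]\to\mathcal{D}\mathcal{C}_q} F(G(p)(q)),
\]
and the identification is natural in both cosimplicial variables.

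Next I apply the Fubini identity $\Tot B \;=\; \Tot_q\bigl(\Tot_p B^{p,q}\bigr)$, obtained by writing $\Tot B$ as the end $\int_{(p,q)}\mathrm{Map}(\Delta^p\times\Delta^q,B^{p,q})$ and separating variables. For each fixed $q$, the cosimplicial space $p\mapsto \prod_{G:[p]\to\mathcal{D}\mathcal{C}_q} F(G(p)(q))$ is precisely the standard cosimplicial replacement for the ordinary homotopy limit of the functor $\mathcal{D}\mathcal{C}_q\to(Top)$, $G\mapsto F(G(q))$; hence $\Tot_p B^{p,q} = \holim_{G\in\mathcal{D}\mathcal{C}_q} F(G(q))$. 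Totalizing the resulting cosimplicial space in $q$ (and renaming $q$ as $p$) produces the desired isomorphism.

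The main obstacle is verifying that the bijection $H\leftrightarrow G$ is fully natural as a map of bicosimplicial sets---that is, compatible with cofaces and codegeneracies on both indices---so that it is an isomorphism on the nose rather than merely a weak equivalence, and so that Fubini applies without further hypotheses on $F$. Since everything is built strictly from the combinatorics of $[p]\times[q]$ and the explicit description of $\mathcal{D}\mathcal{C}_q$, this comes down to careful bookkeeping of which direction in $\mathcal{D}\mathcal{C}$ carries $\mathcal{D}$-morphisms versus $\mathcal{C}$-morphisms; once the conventions are pinned down, the construction is a formal unwinding of definitions.
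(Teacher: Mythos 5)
Your proposal is correct and follows essentially the same route as the paper's proof, which likewise unfolds $\holim_{\mathcal{D}\mathcal{C}}F$ as the totalization of the bicosimplicial space $(p,q)\mapsto\prod_{H:[p]\times[q]\rightarrow\mathcal{D}\mathcal{C}}F(H(p,q))$, applies Fubini to write it as an iterated totalization, and identifies the inner cosimplicial space with the cosimplicial replacement of the homotopy limit over $\mathcal{D}\mathcal{C}_{p}$. The only difference is that you spell out the bijection between double functors $[p]\times[q]\rightarrow\mathcal{D}\mathcal{C}$ and chains in $\mathcal{D}\mathcal{C}_{q}$ (and totalize in the opposite order, which is immaterial), whereas the paper treats this as a direct comparison of definitions.
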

\begin{proof}
We just need to compare the definitions:
\begin{align*}
\holim_{\mathcal{D}\mathcal{C}} F 
&= \Tot \left( (p,q)\mapsto \prod_{H:[p]\times[q]\rightarrow\mathcal{A}} F(H(p,q)) \right) \\
&\cong \Tot \left( p \mapsto \Tot \left( q \mapsto \prod_{H:[p]\times[q]\rightarrow\mathcal{A}} F(H(p,q))\right) \right) \\
&= \Tot \left( p \mapsto \holimsub{G:\left[ p \right] \rightarrow \mathcal{C}} F ( G(p) ) \right) 
\end{align*}
Note that in the first line we consider the totalization of a bicosimplicial space, while the other totalizations are built out of cosimplicial spaces.
\end{proof}

\subsection{The Homotopy Kan extension is polynomial}

In this section we will prove that the homotopy Kan extension of a good functor along the inclusion $\mathcal{O}k \hookrightarrow \mathcal{O}$ is $k$-polynomial. Most parts of the proof follow similar lines as its analogue in Goodwillie-Weiss calculus. For the sake of completeness, we also provide these parts.

\begin{defi}
Let $X$ be a topological space and $r$ be a positive integer. We define the space $F(X,r)$ of ordered configurations of $X$ by
\[
F(X,r) := \left\{ (x_{1},...,x_{r})\in X^{r} \mid x_{i}\neq x_{j} \hspace{0,2cm}\text{for all}\hspace{0,2cm} i\neq j \right\}
\]
The symmetric group $\Sigma_{r}$ acts freely on $F(X,r)$. Let 
\[
B(X,r) := F(X,r) / \Sigma_{r}
\]
be the space of unordered configurations.
\end{defi}

Let $\epsilon$ be an open cover of $K$. 

\begin{defi}
Let $V\in \mathcal{O}k$ be given. Then $V$ is \textit{$\epsilon$-small} if for each connected component $V_{0}$ of $V$, there is an $U\in\epsilon$ such that $V_{0}\subset U$.
\end{defi}

\textbf{Notations:} Let $\mathcal{I}k$ be the subcategory of $\mathcal{O}k$ consisting of the same objects and all morphisms that are stratified isotopy equivalences. \\ 
Let $\epsilon\mathcal{O}k$ be the full subcategory of $\mathcal{O}k$ consisting of the $\epsilon$-small objects. Similarly, we define $\epsilon\mathcal{I}k$ to be the full subcategory of $\mathcal{I}k$ consisting of the $\epsilon$-small objects. 
For $V\in\mathcal{O}(K)$, we introduce $\epsilon\mathcal{O}k(V)$, respectively $\epsilon\mathcal{I}k(V)$, to be the full subcategory of $\epsilon\mathcal{O}k$, respectively $\epsilon\mathcal{I}k$, with all objects which are subsets of $V$. \\

The next lemma gives us the homotopy type of $\left|\epsilon\mathcal{I}k(V) \right|$.

\begin{lem}\label{configurations}
For all $V\in\mathcal{O}(K)$, the following spaces are (weakly) equivalent:
\begin{align*}
\left|\epsilon\mathcal{I}k (V) \right|  \simeq  \coprod_{(S_{1},k_{1}),...,(S_{l},k_{l})} B(\op(S_{1})\cap V,k_{1}) \times ... \times B(\op(S_{l})\cap V,k_{l})
\end{align*}
The disjoint union ranges over all pairs $(S_{i},k_{i})$, $1\leq i\leq l$, where $\op(S_{i})$, $1\leq i\leq l$, are disjoint \textbf{open} simplices of $K$ and $\sum_{i=1}^{l} k_{i} \leq k$. \\
In particular, the functor $V\mapsto \left| \epsilon\mathcal{I}k (V) \right|$ takes stratified isotopy equivalences to weak equivalences. 
\end{lem}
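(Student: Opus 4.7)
The plan is to identify each categorical component of $\epsilon\mathcal{I}k(V)$ with a product of configuration spaces, via a zig-zag of classifying-space equivalences through an auxiliary topological category of configurations-with-neighborhoods.

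\textbf{Combinatorial decomposition.} To each connected component $W_{0}$ of an object $W \in \epsilon\mathcal{I}k(V)$, I associate the unique open simplex $\op(S_{W_{0}})$ of minimal dimension meeting $W_{0}$. Well-definedness: if $W_{0}$ is stratified isotopy equivalent to the open star $\mathcal{S}_{x}$, then $W_{0}$ meets exactly the same open simplices as $\mathcal{S}_{x}$ (namely those $\op(S)$ with $S \supset S_{x}$), and $\op(S_{x})$ is the unique minimal-dimensional such stratum. Stratified isotopy equivalences preserve this datum componentwise, so the category splits as $\epsilon\mathcal{I}k(V)=\coprod_{\tau}\epsilon\mathcal{I}k(V)_{\tau}$ indexed by combinatorial types $\tau = ((S_{1},k_{1}),\ldots,(S_{l},k_{l}))$, where $\epsilon\mathcal{I}k(V)_{\tau}$ is the full subcategory of those $W$ whose components realise exactly $k_{i}$ copies associated to $\op(S_{i})$ for each $i$. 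It therefore suffices to prove $\left|\epsilon\mathcal{I}k(V)_{\tau}\right| \simeq \prod_{i} B(\op(S_{i})\cap V, k_{i})$ for each $\tau$.

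\textbf{Zig-zag through configurations-with-neighborhoods.} For fixed $\tau$, introduce the topological category $\mathcal{M}_{\tau}$ whose objects are pairs $(c, W)$ with $c \in \prod_{i} B(\op(S_{i})\cap V, k_{i})$ a labelled configuration and $W \in \epsilon\mathcal{I}k(V)_{\tau}$ such that each of the $k_{i}$ components of $W$ associated to $\op(S_{i})$ contains exactly one point of $c$; morphisms $(c, W)\to (c, W')$ are stratified isotopy equivalences $W\hookrightarrow W'$. The two forgetful functors
\[
\epsilon\mathcal{I}k(V)_{\tau}\xleftarrow{\pi_{1}}\mathcal{M}_{\tau}\xrightarrow{\pi_{2}}\prod_{i}B(\op(S_{i})\cap V, k_{i})
\]
(with the right-hand side viewed as a topological category with only identity morphisms) are expected to induce weak equivalences on classifying spaces via Quillen's Theorem A. For $\pi_{1}$, the fibre over $W$ is the product, over the components $W_{0}$ of $W$, of the intersections $\op(S_{W_{0}}) \cap W_{0}$; each such factor is contractible because the stratified isotopy equivalence $W_{0}\simeq \mathcal{S}_{x}$ (with $\op(S_{x})=\op(S_{W_{0}})$) identifies it with an open subset of the convex open simplex $\op(S_{x})$ that is itself homotopy equivalent to $\op(S_{x})$. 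For $\pi_{2}$, the fibre over $c$ is the category of neighbourhoods of $c$ inside $\epsilon\mathcal{I}k(V)_{\tau}$: it is nonempty, since small open-star neighbourhoods of each point of $c$ (shrunk via Remark/Definition \ref{col}) lie in $\epsilon\mathcal{I}k(V)_{\tau}$ once the cover $\epsilon$ is respected, and it is filtered because any two such neighbourhoods admit a common refinement obtained by intersecting and re-collaring.

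\textbf{Final claim and main obstacle.} The homotopy invariance of $V\mapsto \left|\epsilon\mathcal{I}k(V)\right|$ under stratified isotopy equivalences then follows from the identification with configuration spaces, since on each open simplex a stratified isotopy equivalence of $V$ restricts to a smooth embedding that is itself an isotopy equivalence, inducing a homotopy equivalence on each configuration-space factor $B(\op(S_{i})\cap V, k_{i})$. The principal obstacle is the filtered-refinement step in the fibre of $\pi_{2}$: given two candidate neighbourhoods $W, W'$ of a fixed configuration $c$, one must use the local product structure of Remark/Definition \ref{col} near each point of $c$ to produce a smaller $W''\subset W\cap W'$ that is still an object of $\epsilon\mathcal{I}k(V)_{\tau}$ and admits stratified isotopy equivalences into both $W$ and $W'$. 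Formalising this, and carefully invoking Quillen's Theorem A in the topological-category setting, is where the real technical work resides.
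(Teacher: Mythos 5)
Your argument is essentially the paper's: the paper decomposes $\left|\epsilon\mathcal{I}k(V)\right|$ into components indexed by the same combinatorial types $(S_{1},k_{1}),\dots,(S_{l},k_{l})$ and, for each type, constructs a zig-zag $\Phi^{(j)} \leftarrow E \rightarrow B(\op(S_{1})\cap V,k_{1})\times\dots\times B(\op(S_{l})\cap V,k_{l})$ in which $E$ (pairs of a point of the nerve and a compatible configuration) is precisely a model for the nerve of your category $\mathcal{M}_{\tau}$ of configurations-with-neighbourhoods. Both arguments leave the same technical verification --- that the two projections are weak equivalences --- to the pattern of the proof of \cite[Lemma 3.5]{mancal1}, which is exactly the step you flag as the remaining work.
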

Note: As a set the above disjoint union is equal to the disjoint union of all configuration spaces $B(V,j)$ with $0\leq j\leq k$. The complicated topology comes from morphisms in $\epsilon\mathcal{I}k (V) $, i.e. from the definition of stratified isotopy equivalences.
\begin{proof}
For $0\leq j\leq k$, let $\epsilon\mathcal{I}^{(j)}(V)$ be the full subcategory of $\epsilon\mathcal{I}k(V)$ where the objects are all open subsets in $\epsilon\mathcal{I}k(V)$ which have exactly $j$ components. Then $\epsilon\mathcal{I}k(V)$ is a coproduct $\coprod_{0\leq j\leq k} \hspace{1mm}\epsilon\mathcal{I}^{(j)}(V)$. We have to determine the homotopy type of $\left|\epsilon\mathcal{I}^{(j)}\right|$.
For $j=0$, this is obvious, thus let $j=1$. In this case, there is a one-one correspondence between the components of $\left|\epsilon\mathcal{I}^{(1)}\right|$ and the open simplices of $K$ (see Remark \ref{compI1}). Claim: For $V\in\mathcal{O}$,
\begin{align*}
\left|\epsilon\mathcal{I}^{(1)}(V)\right| \simeq \coprod_{S} \left|\epsilon\mathcal{I}^{(1)}(\op(S)\cap V)\right| 
\end{align*}
where the disjoint union ranges over all simplices $S$ with $\op(S)\cap V\neq \emptyset$. Here $\op(S)\cap V$ can be considered as a manifold (without boundary). Obviously, $\left|\epsilon\mathcal{I}^{(1)}(V)\right|$ has one component for each simplex $S$ of $K$with $\op(S)\cap V\neq \emptyset$ - namely the classifying space of all $U\in\epsilon\mathcal{I}^{(1)}(V)$ with $U\cap \op(S) \neq \emptyset$ and $U\cap\partial S =\emptyset$. Therefore, we can concentrate on one simplex $S$ with this property. If $\op(S)$ is open in $K$, it is also obvious that the corresponding component of $\left|\epsilon\mathcal{I}^{(1)}(V)\right|$ is (weakly) equivalent to $\left|\epsilon\mathcal{I}^{(1)}(\op(S)\cap V)\right|$ (it is even equal). If $S$ is a subsimplex of another simplex, each element $U$ of $\epsilon\mathcal{I}^{(1)}(V)$ with $U\cap \op(S) \neq \emptyset$ and $U\cap\partial S =\emptyset$ is a collar of $U\cap \op(S)$. But this is (weakly) equivalent to $\left|\epsilon\mathcal{I}^{(1)}(\op(S))\right|$: There is a homotopy terminal functor
\[
\left\{ U\in \epsilon\mathcal{I}^{(1)}(V) \mid U\cap \op(S) \neq \emptyset, U\cap\partial S =\emptyset \right\}   \hspace{0,2cm}\rightarrow\hspace{0,2cm}   \epsilon\mathcal{I}^{(1)}(\op(S))
\]
which is given by $U \mapsto U\cap \op(S)$ (this is not obvious). Therefore, the corresponding map of classifying spaces is a weak equivalence. Now we can use the analogue in (smooth) manifold calculus \cite[3.5]{mancal1} and we get 
\begin{align*}
\left|\epsilon\mathcal{I}^{(1)}(V)\right| \simeq \coprod_{S} (\op(S)\cap V)
\end{align*}
The case $j>1$ follows similar lines, but is even more complicated. Therefore, we will provide another proof. \\

There is another approach to verify the weak equivalence
\[
\Phi^{(1)} := \left|\left\{ U\in \epsilon\mathcal{I}^{(1)}(V) \mid U\cap \op(S) \neq \emptyset, U\cap\partial S =\emptyset \right\}\right| \hspace{0,2cm} \simeq  \hspace{0,2cm} \op(S)\cap V
\]
which is similar to the proof of \cite[Lemma 3.5]{mancal1} and does not use that the above functor is homotopy terminal. Let 
\[
E \subset \Phi^{(1)} \times (\op(S)\cap V)
\]
be the space of all pairs $(x,y)$ such that the open cell containing $x$ corresponds to the simplex
\[
U_{0}\rightarrow ... \rightarrow U_{r}
\]
and $y\in \op(S)\cap U_{r}$. We consider the projection maps 
\[
\Phi^{(1)} \leftarrow E \rightarrow (\op(S)\cap V)
\]
We have to verify that these maps are weak equivalences. We skip the verification because it is analogous to the proof of \cite[Lemma 3.5]{mancal1}. \\
For $j>1$, there is a one-one correspondence between the components of $\left|\epsilon\mathcal{I}^{(j)}\right|$ and the set $\Omega^{(j)}$ of all collections of pairs $(S_{i},k_{i})$, $1\leq i\leq l$, where $\op(S_{i})$, $1\leq i\leq l$, are disjoint open simplices of $K$ and $\sum_{i=1}^{l} k_{i} = j$. Next, we have to prove that there is an equivalence
\begin{align*}
\left|\epsilon\mathcal{I}^{(j)} (V) \right|  \simeq   \coprod_{\Omega^{(j)}} \left|\epsilon\mathcal{I}^{(k_{1})}(\op(S_{1})\cap V)\right| \times ... \times \left|\epsilon\mathcal{I}^{(k_{l})}(\op(S_{l})\cap V)\right|
\end{align*}
This can be shown in the following way: Let $(S_{i},k_{i})$, $1\leq i\leq l$, be an element of $\Omega^{(j)}$. Then we define $\Phi^{(j)}$ to be the following component of $\left|\epsilon\mathcal{I}^{(j)} (V) \right|$: it is the classifying space of all $U \in \epsilon\mathcal{I}^{(j)}(V)$ such that for every $1\leq i\leq l$, $U$ has exactly $k_{i}$ components which have nonempty intersection with $\op(S_{i})$ and empty intersection with $\partial S_{i}$. Then we consider the space 
\[
E \subset \Phi^{(j)} \times \left(B(\op(S_{1})\cap V,k_{1}) \times ... \times B(\op(S_{l})\cap V,k_{l})\right)
\]
of all pairs $(x,T)$ such that the open cell containing $x$ corresponds to the simplex
\[
U_{0}\rightarrow ... \rightarrow U_{r}
\]
where each component of $U_{r}$ contains exactly one point of $T$. Analogously to the case $j=1$, we can prove that the projection maps 
\[
\Phi^{(j)} \leftarrow E \rightarrow \left(B(\op(S_{1})\cap V,k_{1}) \times ... \times B(\op(S_{l})\cap V,k_{l})\right)
\]
are weak equivalences.
\end{proof}

Let $\mathcal{C}$ be the category $\epsilon\mathcal{O}k$ and $\mathcal{D}$ be the subcategory $\epsilon\mathcal{I}k$. Now we consider the double category $\epsilon\mathcal{I}k\mathcal{O}k:= \mathcal{D}\mathcal{C}$ (compare Example \ref{doubleDC}).  \\
\textbf{Notation:} The category $\epsilon\mathcal{I}k\mathcal{O}k_{p}(V)$ is a full subcategory of $\epsilon\mathcal{I}k\mathcal{O}k_{p}$ with all objects 
\[
(U_{0}\subset U_{1} \subset ... \subset U_{p})\in\epsilon\mathcal{I}k\mathcal{O}k_{p}
\]
such that $U_{i}\subset V$ for all $i\in[p]$. \\
There is a functor from $\epsilon\mathcal{I}k\mathcal{O}k_{p}(V)$ to $\epsilon\mathcal{I}k(V)$ given by $G\mapsto G(p)$ where $G:[p]\rightarrow \mathcal{O}k(V)$ is an element of $\epsilon\mathcal{I}k\mathcal{O}k_{p}(V)$. The following lemma gives an idea of the homotopy type of $\epsilon\mathcal{I}k\mathcal{O}k_{p}(V)$.

\begin{lem}\label{lemisotohom}
The following two conditions are fulfilled:
\begin{enumerate}
 \item Given $U,V\in\epsilon\mathcal{I}k(K)$ with $U\subset V$, there is a homotopy equivalence between $\left| \epsilon\mathcal{I}k\mathcal{O}k_{p-1}(U) \right|$ and the homotopy fiber over the point (which is identified with) $U$ of the map
\begin{align*}
\left| \epsilon\mathcal{I}k\mathcal{O}k_{p} (V) \right| \rightarrow \left| \epsilon\mathcal{I}k(V) \right|
\end{align*}
induced by $G\mapsto G(p)$. 
 \item The functor $V\mapsto \left| \epsilon\mathcal{I}k\mathcal{O}k_{p}(V) \right|$ takes stratified isotopy equivalences to weak equivalences.
\end{enumerate}
\end{lem}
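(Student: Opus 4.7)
The plan is to prove (1) by identifying the homotopy fibre of the projection $\pi\colon G\mapsto G(p)$ via Quillen's Theorem B, and then to derive (2) by induction on $p$ using (1) together with Lemma \ref{configurations}.

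For (1), write $\pi\colon\epsilon\mathcal{I}k\mathcal{O}k_p(V)\to\epsilon\mathcal{I}k(V)$, $G\mapsto G(p)$. The strict fibre over an object $U\in\epsilon\mathcal{I}k(V)$ consists of chains $U_0\subset\cdots\subset U_{p-1}\subset U$ together with chain morphisms whose last vertical component is $\mathrm{id}_U$; restriction to the subchain indexed by $[p-1]$ gives a canonical isomorphism of this strict fibre with $\epsilon\mathcal{I}k\mathcal{O}k_{p-1}(U)$ as a category. To pass from the strict to the homotopy fibre I would verify the hypothesis of Quillen's Theorem B for $\pi$: that for every morphism $\phi\colon U\to U'$ in $\epsilon\mathcal{I}k(V)$ the induced functor on comma categories $\pi/U\to\pi/U'$ is a weak equivalence on nerves. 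The key input is that by Definition \ref{isotopyequ} the stratified isotopy equivalence $\phi$ admits a stratified-isotopic inverse $\psi$, so "compose with $\psi$" gives a homotopy inverse at the level of nerves, and two different choices of $\psi$ differ by a stratified isotopy, supplying the higher coherence needed inside the double-categorical structure.

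For (2), I induct on $p$. The base case $p=0$ holds because $\epsilon\mathcal{I}k\mathcal{O}k_0(V)\cong\epsilon\mathcal{I}k(V)$, reducing the claim to Lemma \ref{configurations}. For the inductive step, let $j\colon V\to V'$ be a stratified isotopy equivalence. By part (1), both $\pi$ and its analogue $\pi'$ for $V'$ fit into homotopy fibre sequences whose fibres at $U$ and $j(U)$ are identified, respectively, with $|\epsilon\mathcal{I}k\mathcal{O}k_{p-1}(U)|$ and $|\epsilon\mathcal{I}k\mathcal{O}k_{p-1}(j(U))|$. The induced map on bases is a weak equivalence by Lemma \ref{configurations}, while the induced map on each fibre is a weak equivalence by the induction hypothesis applied to the stratified isotopy equivalence $U\to j(U)$ induced by $j$. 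The standard comparison theorem for homotopy fibre sequences then forces the middle map to be a weak equivalence.

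The principal obstacle is the Theorem B hypothesis: constructing a lift of $\phi$ acting on entire chains requires inverting stratified isotopy equivalences coherently, which can only be done up to stratified isotopy. Making this coherent is where the double-category structure is indispensable, since morphisms are commutative squares of isotopy equivalences rather than strict diagrams, providing enough slack to encode the natural transformations between different choices of inverses. Once this step is in hand, identifying the strict fibre with $\epsilon\mathcal{I}k\mathcal{O}k_{p-1}(U)$ and running the inductive comparison are essentially formal.
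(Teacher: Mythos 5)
Your overall architecture---induct on $p$, identify the fibre of $G\mapsto G(p)$ over $U$ with $\epsilon\mathcal{I}k\mathcal{O}k_{p-1}(U)$, and deduce (ii) from (i) by comparing fibre sequences---is the same as the paper's, which uses Thomason's homotopy colimit theorem to rewrite $\left|\epsilon\mathcal{I}k\mathcal{O}k_{p}(V)\right|$ as $\hocolim_{U\in\epsilon\mathcal{I}k(V)}\left|\epsilon\mathcal{I}k\mathcal{O}k_{p-1}(U)\right|$ and then applies Proposition \ref{quasifibration} where you invoke Quillen's Theorem B. The genuine gap is in your treatment of (i). The hypothesis you must verify (whether for Theorem B or for Proposition \ref{quasifibration}) is precisely that for every morphism $U\to U'$ in $\epsilon\mathcal{I}k(V)$ the induced map $\left|\epsilon\mathcal{I}k\mathcal{O}k_{p-1}(U)\right|\to\left|\epsilon\mathcal{I}k\mathcal{O}k_{p-1}(U')\right|$ is a weak equivalence---and that is exactly statement (ii) at stage $p-1$. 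So (i) at stage $p$ cannot be established first and independently, as your plan proposes; the two statements must be run as a single parallel induction on $p$, with (ii)$_{p-1}$ feeding into (i)$_{p}$ and (i)$_{p}$ feeding into (ii)$_{p}$. That is how the paper organizes the proof.

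Your attempted direct verification of the Theorem B hypothesis---``compose with $\psi$'', where $\psi$ is a stratified-isotopic inverse of $\phi$---does not work as stated: $\psi$ is a stratified embedding $U'\to U$, not an inclusion, hence not a morphism of $\mathcal{O}$, so it does not act on the categories of chains $U_{0}\subset\cdots\subset U_{p-1}$ by composition. One can try $W\mapsto\psi(W)$, but showing this is homotopy inverse to the inclusion-induced functor is again exactly the isotopy-invariance statement (ii)$_{p-1}$ you are trying to bypass; the double-categorical ``slack'' you invoke does not circumvent this circularity. A secondary point: Theorem B identifies the homotopy fibre with the comma category $\pi/U$, not with the strict fibre, so you would still owe a comparison of $\pi/U$ with $\epsilon\mathcal{I}k\mathcal{O}k_{p-1}(U)$; the quasifibration route avoids this because for a quasifibration the actual fibre already computes the homotopy fibre. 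Once the induction is made joint and the hypothesis is supplied by (ii)$_{p-1}$, the rest of your argument (base case $p=0$ via Lemma \ref{configurations}, and the fibre-sequence comparison for the inductive step of (ii)) is correct and agrees with the paper.
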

\begin{proof}
We prove these two statements parallelly by induction on $p$. For $p=0$, we can use Lemma \ref{configurations}. \\ 
By induction, we assume that the functor $V\mapsto \left| \epsilon\mathcal{I}k\mathcal{O}k_{p-1}(V) \right|$ takes stratified isotopy equivalences to weak equivalences. Using Thomason`s homotopy colimit theorem \cite{thomass}, the map under investigation which is induced by $G\mapsto G(p)$ corresponds to the canonical map
\begin{align*}
\hocolimsub{U\in\epsilon\mathcal{I}k(V)} \left| \epsilon\mathcal{I}k\mathcal{O}k_{p-1}(U) \right| \rightarrow \left| \epsilon\mathcal{I}k(V) \right| 
\end{align*}
By Prop. \ref{quasifibration}, this map is a quasifibration. Therefore, the homotopy fiber coincides (up to homotopy) with the fiber. The fiber of this map over $U$ is evidently $\left| \epsilon\mathcal{I}k\mathcal{O}k_{p-1}(U) \right|$. Using the resulting (homotopy) fiber sequence, it follows that the functor $V\mapsto \left| \epsilon\mathcal{I}k\mathcal{O}k_{p}(V) \right|$ takes stratified isotopy equivalences to weak equivalences, too. 
\end{proof}

\textbf{Notation:} Let $F : \epsilon\mathcal{O}k \rightarrow (Top)$ be a contravariant functor which takes all stratified isotopy equivalences to weak equivalences. Then we define the contravariant functor $\epsilon F^{!} : \mathcal{O} \rightarrow (Top)$ by
\begin{align*}
\epsilon F^{!}(V) := \holimsub{U\in \epsilon\mathcal{O}k(V)} F(U)
\end{align*}
By definition, $\epsilon F^{!}$ is the homotopy right Kan extension along the inclusion functor $\epsilon\mathcal{O}k \rightarrow \mathcal{O}$.

\begin{lem}\label{goodfunctor}
The functor $\epsilon F^{!}$ is good.
\end{lem}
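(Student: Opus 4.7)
The plan is to verify the two conditions of Definition \ref{good} for $\epsilon F^!$: the isotopy-invariance condition (1) will reduce to Lemma \ref{lemisotohom}(2) via the double-categorical framework of the preceding subsection, and the limit axiom (2) will follow from a Fubini/cofinality argument together with a mild finiteness observation.

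For condition (1), suppose $i:V_1\hookrightarrow V_2$ is a stratified isotopy equivalence. First I would invoke Lemma \ref{prdb}: since $F$ sends all morphisms of $\epsilon\mathcal{I}k$ to weak equivalences, we obtain
\[
\epsilon F^!(V) = \holim_{\epsilon\mathcal{O}k(V)} F \simeq \holim_{\epsilon\mathcal{I}k\mathcal{O}k(V)} F.
\]
By Lemma \ref{dftcs}, the right-hand side is the totalization of the cosimplicial space $p \mapsto \holim_{G\in\epsilon\mathcal{I}k\mathcal{O}k_p(V)} F(G(p))$. For fixed $p$, the diagram $G \mapsto F(G(p))$ is homotopically locally constant: a morphism $G\to G'$ is equipped with a stratified isotopy equivalence $G(p)\to G'(p)$, and $F$ is good. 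The homotopy limit of a homotopically locally constant diagram depends, up to weak equivalence, only on the weak homotopy type of the indexing nerve together with the (common) value of the functor. Lemma \ref{lemisotohom}(2) tells us that $|\epsilon\mathcal{I}k\mathcal{O}k_p(V)|$ is isotopy-invariant in $V$, so the restriction map is a weak equivalence at each cosimplicial level, hence on totalizations.

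For condition (2), let $V=\cup_i V_i$ with $V_i\subset V_{i+1}$. I would interchange homotopy limits to write
\[
\holim_i \epsilon F^!(V_i) \simeq \holim_{\mathcal{J}} F,
\]
where $\mathcal{J}$ is the Grothendieck construction over $\mathbb{N}$ with objects $(i,U)$ for $U\in\epsilon\mathcal{O}k(V_i)$. The projection $(i,U)\mapsto U$ has fibers $\{i : U\subset V_i\}$, which are either empty or cofinal intervals $\{i\geq i_0\}$, hence contractible when nonempty; this replaces $\holim_{\mathcal{J}} F$ by $\holim_{\bigcup_i \epsilon\mathcal{O}k(V_i)} F$ via a standard cofinality argument. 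Finally, I would check that $\bigcup_i \epsilon\mathcal{O}k(V_i) = \epsilon\mathcal{O}k(V)$: given $U\in\epsilon\mathcal{O}k(V)$, each of its at most $k$ components sits inside some open star $\mathcal{S}_x$ by the defining condition of $\mathcal{O}k$, so $\text{cl}(U)$ is contained in a finite union of closed stars and is therefore compact, hence inside some $V_{i_0}$.

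The main obstacle I expect is justifying the two homotopy-theoretic reductions. Passing from level-wise weak equivalences of cosimplicial spaces to a weak equivalence on totalizations in (1) requires invoking either fibrancy of the cosimplicial object or the ``space of sections of a local system'' description of homotopy limits of homotopically locally constant diagrams. The cofinality step in (2) must likewise be justified, via Thomason's theorem or an explicit cofinal inclusion argument, and the compactness claim at the very end depends on local finiteness of $K$, which is implicit in the setup.
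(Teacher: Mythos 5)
Your treatment of condition (1) is essentially the paper's own argument: reduce via Lemma \ref{prdb} and Lemma \ref{dftcs} to the cosimplicial levels, observe that $G\mapsto F(G(p))$ takes all morphisms of $\epsilon\mathcal{I}k\mathcal{O}k_{p}(V)$ to weak equivalences, and conclude from Lemma \ref{lemisotohom}(2) that each level is isotopy-invariant. The paper makes the ``locally constant'' reduction precise exactly as you anticipate, via the quasifibration and section-space statements (Propositions \ref{quasifibration}, \ref{holimassfib}, \ref{homequsmcat}), so that part of your plan is sound.

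The gap is in condition (2), in the final step. You claim $\bigcup_{i}\epsilon\mathcal{O}k(V_{i})=\epsilon\mathcal{O}k(V)$ on the grounds that $\mathrm{cl}(U)$ is compact for $U\in\epsilon\mathcal{O}k(V)$, ``hence inside some $V_{i_{0}}$.'' Compactness of $\mathrm{cl}(U)$ is not the issue; the issue is that $U\subset V$ does not give $\mathrm{cl}(U)\subset V$, so the compactness argument does not place $U$ in any $V_{i_{0}}$. Concretely, take $K$ a single $1$-simplex identified with $[0,1]$, $V=(0,1)=\bigcup_{i}V_{i}$ with $V_{i}=(1/i,\,1-1/i)$, and $U=(0,1/2)$. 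Then $U\in\mathcal{O}1(V)$ (it is stratified isotopy equivalent to the open star $(0,1)$), yet $U$ is contained in no $V_{i}$. So the proposed equality of index categories is false, and the cofinality step as stated collapses. What is true, and what the standard repair uses, is that every $U\in\epsilon\mathcal{O}k(V)$ can be shrunk by a stratified isotopy to some $U'\in\epsilon\mathcal{O}k(V_{i})$ with $U'\hookrightarrow U$ a stratified isotopy equivalence; one then argues either that the inclusion $\bigcup_{i}\epsilon\mathcal{O}k(V_{i})\hookrightarrow\epsilon\mathcal{O}k(V)$ is homotopy initial for the (homotopically locally constant) diagram, or, as the paper implicitly does, one works level by level with the section-space model, where Lemma \ref{configurations} identifies the nerves with configuration spaces and these convert the increasing union $V=\bigcup_{i}V_{i}$ into a homotopy colimit, so that section spaces become the desired homotopy inverse limit. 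Without some such argument replacing the false equality, your proof of the limit axiom is incomplete. (For fairness: the paper itself is terse here, asserting goodness of the levelwise functors after verifying only isotopy invariance explicitly; but the burden you took on requires the cofinality claim to be correct, and as written it is not.)
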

\begin{proof}
By Lemma \ref{prdb}, the projection map
\begin{align*}
\holimsub{U\in\epsilon\mathcal{I}k\mathcal{O}k(V)} F(U) \rightarrow \holimsub{U\in\epsilon\mathcal{O}k(V)} F(U)
\end{align*}
is a weak equivalence.
By Lemma \ref{dftcs}, we have an isomorphism between $\holim_{\epsilon\mathcal{I}k\mathcal{O}k(V)} F$ and the totalization of the cosimplicial space
\begin{align*}
p\mapsto \holimsub{(G:\left[ p \right] \rightarrow \epsilon\mathcal{O}k) \in\epsilon\mathcal{I}k\mathcal{O}k_{p}(V)} F ( G(p) )
\end{align*}
Note that the functor from $\epsilon\mathcal{I}k\mathcal{O}k_{p}(V)$ to $(Top)$ given by $G\mapsto F(G(p))$ takes all morphisms to weak equivalences. Therefore, the canonical map 
\begin{align*}
\hocolimsub{(G:\left[ p \right] \rightarrow \epsilon\mathcal{O}k) \in\epsilon\mathcal{I}k\mathcal{O}k_{p}(V)} F(G(p)) \rightarrow \left|\epsilon\mathcal{I}k\mathcal{O}k_{p}(V)\right|
\end{align*}
is a quasifibration (Proposition \ref{quasifibration}). 
Using Proposition \ref{holimassfib}, the section space of the associated fibration is weakly equivalent to 
\[
\holimsub{(G:\left[ p \right] \rightarrow \epsilon\mathcal{O}k) \in\epsilon\mathcal{I}k\mathcal{O}k_{p}(V)} F ( G(p) )
\]
Now let $V_{0}\rightarrow V_{1}$ be a morphism in $\epsilon\mathcal{I}k$. Using Lemma \ref{lemisotohom}, the inclusion of categories $\epsilon\mathcal{I}k\mathcal{O}k_{p}(V_{0}) \rightarrow \epsilon\mathcal{I}k\mathcal{O}k_{p}(V_{1})$ induces a weak equivalence of classifying spaces. Therefore, the map
\begin{align*}
\hocolimsub{(G:\left[ p \right] \rightarrow \epsilon\mathcal{O}k) \in\epsilon\mathcal{I}k\mathcal{O}k_{p}(V_{0})} F ( G(p) )
\rightarrow
\hocolimsub{(G:\left[ p \right] \rightarrow \epsilon\mathcal{O}k) \in\epsilon\mathcal{I}k\mathcal{O}k_{p}(V_{1})} F ( G(p) )
\end{align*}
is also a weak equivalence (use Proposition \ref{homequsmcat}). We have shown that 
\[
V\mapsto \holimsub{(G:\left[ p \right] \rightarrow \epsilon\mathcal{O}k) \in\epsilon\mathcal{I}k\mathcal{O}k_{p}(V)} F ( G(p) )
\] 
is a good functor for all $p$. Therefore, $\epsilon F^{!}$ is a good functor.
\end{proof}

\textbf{Notation:} If $\epsilon = \left\{ K \right\}$, then $\epsilon\mathcal{O}k(V)=\mathcal{O}k(V)$ for all $V\in\mathcal{O}(K)$. We define
\begin{align*}
F^{!}(V) := \holimsub{U\in\mathcal{O}k(V)} F(U)
\end{align*}

\begin{thm}\label{opencover}
The induced map $F^{!}(V) \rightarrow \epsilon F^{!}(V)$ is a weak equivalence.
\end{thm}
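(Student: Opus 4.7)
The plan is to mimic the machinery employed in Lemma \ref{goodfunctor} and reduce the statement to a comparison of classifying spaces of isotopy categories, which I would handle by induction on cosimplicial degree.

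Step one is to apply Lemma \ref{prdb}. Since $F$ sends stratified isotopy equivalences to weak equivalences, the projection maps
\[
\holimsub{U\in\mathcal{I}k\mathcal{O}k(V)} F(U) \to F^{!}(V) \qquad \text{and} \qquad \holimsub{U\in\epsilon\mathcal{I}k\mathcal{O}k(V)} F(U) \to \epsilon F^{!}(V)
\]
are weak equivalences. By Lemma \ref{dftcs}, each double-category homotopy limit is the totalization of a cosimplicial space whose $p$-th term is the homotopy limit of $G\mapsto F(G(p))$ over $\mathcal{I}k\mathcal{O}k_{p}(V)$, respectively over $\epsilon\mathcal{I}k\mathcal{O}k_{p}(V)$. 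It therefore suffices to show that at each level $p$ the inclusion-induced map
\[
\holimsub{G \in \epsilon\mathcal{I}k\mathcal{O}k_{p}(V)} F(G(p)) \to \holimsub{G \in \mathcal{I}k\mathcal{O}k_{p}(V)} F(G(p))
\]
is a weak equivalence.

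Step two, following the argument of Lemma \ref{goodfunctor}, is to identify each of these level-$p$ homotopy limits with (the section space of) the associated fibration of the Thomason quasifibration
\[
\hocolimsub{G \in \mathcal{I}k\mathcal{O}k_{p}(V)} F(G(p)) \to |\mathcal{I}k\mathcal{O}k_{p}(V)|
\]
(and its $\epsilon$-analogue), with fiber $F(U_{p})$ over the vertex $U_{0}\subset\cdots\subset U_{p}$. Since the fibers on the two sides agree on matched vertices, the comparison reduces to showing that the inclusion $|\epsilon\mathcal{I}k\mathcal{O}k_{p}(V)| \hookrightarrow |\mathcal{I}k\mathcal{O}k_{p}(V)|$ is a weak equivalence for every $p$. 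I would prove this by induction on $p$. For $p=0$, Lemma \ref{configurations} identifies both classifying spaces with the same disjoint union of products of configuration spaces, the identification being manifestly independent of $\epsilon$. For $p\geq 1$, Lemma \ref{lemisotohom}(i) (and the same argument without $\epsilon$) supplies parallel quasifibration sequences with bases $|\epsilon\mathcal{I}k(V)|\to|\mathcal{I}k(V)|$, which is a weak equivalence by the $p=0$ case, and fibers $|\epsilon\mathcal{I}k\mathcal{O}k_{p-1}(U)|\to|\mathcal{I}k\mathcal{O}k_{p-1}(U)|$ over each $\epsilon$-small $U$, which is a weak equivalence by the inductive hypothesis. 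A long-exact-sequence comparison then closes the induction.

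The hard part will be the section-space comparison at the end of step two: one must verify that an equivalence of bases together with a pointwise equivalence of fibers of the associated fibrations suffices to produce an equivalence of their section spaces. Once this is granted --- using the same results about associated fibrations and homotopy colimits of small categories already invoked in the proof of Lemma \ref{goodfunctor} --- each cosimplicial level is a weak equivalence, and taking $\Tot$ delivers the theorem.
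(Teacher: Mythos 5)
Your proposal is correct and follows essentially the same route as the paper: reduce via Lemma \ref{prdb} and Lemma \ref{dftcs} to a levelwise comparison, identify each level with a section space of the associated fibration of the Thomason quasifibration, and conclude from the fact that $\left|\epsilon\mathcal{I}k\mathcal{O}k_{p}(V)\right| \rightarrow \left|\mathcal{I}k\mathcal{O}k_{p}(V)\right|$ is a weak equivalence (which the paper also deduces from Lemmas \ref{configurations} and \ref{lemisotohom}, invoking Proposition \ref{homequsmcat}). Your explicit induction on $p$ merely spells out what the paper leaves implicit in that citation.
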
 
\begin{proof}
Using Lemma \ref{prdb} and Lemma \ref{dftcs}, it suffices to show that there are weak equivalences 
\[
\holimsub{U\in\mathcal{I}k\mathcal{O}k_{p}(V)} F(U) \rightarrow \holimsub{U\in\epsilon\mathcal{I}k\mathcal{O}k_{p}(V)} F(U)
\]
for all $p$. We consider the following composition of maps: 
\begin{center}
$\holimsub{U\in\mathcal{I}k\mathcal{O}k_{p}(V)} F(U)$ \\
\vspace{0,4cm}
$\simeq$ \\
\vspace{0,4cm}
$\left\{ s : \left|\mathcal{I}k\mathcal{O}k_{p}(V)\right| \rightarrow\hocolimsub{G\in\mathcal{I}k\mathcal{O}k_{p}(V)} F(G(p)) \mid pr\circ s = id_{ \left|\mathcal{I}k\mathcal{O}k_{p}(V)\right|} \right\}$ \\
$\downarrow$ \\
$\left\{ s : \left|\epsilon\mathcal{I}k\mathcal{O}k_{p}(V)\right| \rightarrow\hocolimsub{G\in\epsilon\mathcal{I}k\mathcal{O}k_{p}(V)} F(G(p)) \mid pr\circ s = id_{ \left|\epsilon\mathcal{I}k\mathcal{O}k_{p}(V)\right|} \right\}$ \\
\vspace{0,4cm}
$\simeq$ \\
\vspace{0,4cm}
$\holimsub{U\in\epsilon\mathcal{I}k\mathcal{O}k_{p}(V)} F(U)$
\end{center}
The (weak) equivalences are the equivalences given by Theorem \ref{holimassfib}. The map between the section spaces is given by restriction (note that $\left|\epsilon\mathcal{I}k\mathcal{O}k_{p}(V)\right|$ is a subset of $\left|\mathcal{I}k\mathcal{O}k_{p}(V)\right|$). Therefore, the composition is the canonical map (up to homotopy). In order to verify that the second map is a weak equivalence, we use Theorem \ref{homequsmcat} (by Lemma \ref{configurations} and Lemma \ref{lemisotohom}, the inclusion of categories
\[
\epsilon\mathcal{I}k\mathcal{O}k_{p}(V) \rightarrow \mathcal{I}k\mathcal{O}k_{p}(V)
\]
induces a weak equivalence of classifying spaces).
\end{proof}

\begin{cor}\label{kpolyn}
The functor $F^{!}:\mathcal{O}\rightarrow (Top)$ is polynomial of degree $\leq k$.
\end{cor}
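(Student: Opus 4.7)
The plan is to reduce via Theorem \ref{opencover} to showing that the cube $T\mapsto\epsilon F^{!}(V\setminus A_{T})$ is cartesian for a cleverly chosen open cover $\epsilon$ of $K$, and then to compute the punctured homotopy limit directly by a Fubini interchange. Fix $V\in\mathcal{O}$ and pairwise disjoint closed subsets $A_{0},\ldots,A_{k}\subset V$, writing $A_{T}:=\cup_{i\in T}A_{i}$ as usual.

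The first step is to construct $\epsilon$. Using normality of the realization of $K$, applied to the pairwise disjoint closed subsets $A_{0},\ldots,A_{k}$ of $V$, I would produce an open cover $\epsilon$ of $K$ with the property that every member of $\epsilon$ meets at most one of the $A_{i}$. (Concretely, take pairwise disjoint open neighbourhoods $W_{i}$ of the $A_{i}$ inside $V$, together with a suitable cover of the remainder of $K$ by open sets disjoint from all the $A_{i}$.) Theorem \ref{opencover} then identifies $F^{!}(V\setminus A_{T})$ with $\epsilon F^{!}(V\setminus A_{T})$ vertexwise, so it suffices to prove that the $\epsilon$-version of the cube is cartesian.

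The key combinatorial observation is the following: for any $U\in\epsilon\mathcal{O}k(V)$, the set
\[
I(U):=\{i\in[k]\mid U\cap A_{i}=\emptyset\}
\]
is nonempty. Indeed, $U$ has at most $k$ connected components, each contained in a member of $\epsilon$ and therefore meeting at most one $A_{i}$, so $U$ meets at most $k$ of the $k+1$ sets $A_{0},\ldots,A_{k}$. Moreover $U\subset V\setminus A_{T}$ iff $T\subset I(U)$. A Fubini-style interchange of homotopy limits then yields
\[
\holimsub{\emptyset\neq T\subset[k]}\epsilon F^{!}(V\setminus A_{T}) \hspace{1mm}\simeq\hspace{1mm} \holimsub{U\in\epsilon\mathcal{O}k(V)} \holimsub{\emptyset\neq T\subset I(U)} F(U).
\]
Since $I(U)$ is nonempty, the inner indexing poset $\{T\mid\emptyset\neq T\subset I(U)\}$ has a maximum element $I(U)$, hence contractible nerve, and the inner homotopy limit is weakly equivalent to the constant $F(U)$. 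The outer expression thus collapses to $\epsilon F^{!}(V)$, which is precisely the equivalence we need.

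The main obstacle will be making the Fubini interchange rigorous (identifying the iterated homotopy limit with a homotopy limit over pairs $(T,U)$ via the appropriate Grothendieck construction with mixed variance) and checking that the resulting chain of weak equivalences realises the canonical comparison map appearing in Definition \ref{polynomial}. Everything else is a direct consequence of the choice of $\epsilon$, Theorem \ref{opencover}, and the contractibility of the poset of nonempty subsets of a nonempty finite set.
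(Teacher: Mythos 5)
Your overall strategy is the one the paper uses: pick an open cover $\epsilon$ so that every $\epsilon$-small special open set $U$ misses at least one $A_{i}$ (pigeonhole on the at most $k$ components), identify the punctured homotopy limit of the cube with $\epsilon F^{!}(V)$, and transfer back via Theorem \ref{opencover}. The Fubini/Grothendieck-construction computation you sketch for the punctured holim is exactly the content of \cite[Lemma 4.2]{mancal1}, which the paper simply cites at that point; re-deriving it is legitimate but buys nothing new, and the mixed-variance bookkeeping you defer is precisely what that lemma packages.

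There is, however, one concrete misstep: your construction of $\epsilon$. The sets $A_{0},\dots,A_{k}$ are closed in $V$, not in $K$, so their closures in $K$ may intersect outside $V$ (e.g.\ two disjoint sequences in an open interval $V$ accumulating at the same point of $K\setminus V$). A point of $\overline{A_{0}}\cap\overline{A_{1}}\cap(K\setminus V)$ has no neighbourhood in $K$ meeting at most one of the $A_{i}$, so the cover of $K$ you describe via normality need not exist, and with it the pigeonhole step for components of $U$ lying in such a member would fail. The paper sidesteps this by first reducing to $V=K$ (where your normality argument is fine, since the $A_{i}$ are then disjoint closed subsets of $K$) and taking the explicit cover $\epsilon=\{K\setminus A_{T}\mid |T|=k\}$, each member of which meets only the one $A_{j}$ with $j\notin T$. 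Either make that reduction explicit, or replace your cover by $\{V\setminus A_{T}\mid |T|=k\}$ and check that Theorem \ref{opencover} applies in the form you need; as written, the construction of $\epsilon$ is the one step that does not go through.
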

\begin{proof}
We have to show that the condition in Definition \ref{polynomial} is satisfied. Let $V\in\mathcal{O}$ be an open set and $A_{0},A_{1},...,A_{k}$ be pairwise disjoint closed subsets of $V$. Without loss of generality, we assume $V=K$ (the general proof follows similar lines). \\
Now we define $K_{T}:=\cap_{i\in T} (K\setminus A_{i})$ for $T\subset [k]=\left\{0,1,...,k\right\}$ and the open cover $\epsilon:= \left\{ K_{T} \mid k=\left|T\right|\right\}$ of $K$. For each $U\in \epsilon\mathcal{O}k$, there is an $i\in[k]$ such that $U\cap A_{i} = \emptyset$ (pigeonhole principle: each component of $U$ meets at most one of the $A_{j}$, but $U$ has at most $k$ components). It follows
\begin{align*}
\epsilon\mathcal{O}k(K)=\cup_{i\in[k]}\hspace{0,1cm}\epsilon\mathcal{O}k(K_{\left\{i\right\}})
\end{align*}
Now we can use \cite[Lemma 4.2]{mancal1} and follow that the canonical map
\begin{align*}
\epsilon F^{!}(K) = \holimsub{\epsilon \mathcal{O}k} F \rightarrow \holimsub{T\neq \emptyset} \holimsub{\epsilon\mathcal{O}k(K_{T})} F = \holimsub{T\neq \emptyset} F^{!}(K_{T})
\end{align*}
is a weak equivalence. We have shown that the $k$-cube 
\begin{align*}
S\mapsto \epsilon F^{!} (K_{T})
\end{align*}
is homotopy cartesian. By Theorem \ref{opencover}, the functor $F^{!}$ is polynomial of degree $\leq k$.
\end{proof}

\subsection{The tower}

Let $F$ be a contravariant good functor from $\mathcal{O}$ to $(Top)$. For every $k\geq 0$, we define the functor $T_{k}F$ from $\mathcal{O}$ to $(Top)$ by
\[
T_{k}F(V) := \holimsub{U\in\mathcal{O}k(V)} F(U)
\]
which is called the \textit{$k$-th Taylor approximation} of $F$. By definition, there is a canonical transformation $\eta_{k}:F\rightarrow T_{k}F$. The following proposition follows from Theorem \ref{F1F2} and Corollary \ref{kpolyn}.
\begin{prop}
If $F$ is $k$-polynomial, the canonical map
\[
\eta_{k}(V) : F(V)\rightarrow T_{k}F(V) 
\]
is a weak equivalence for every open set $V\in\mathcal{O}$.
\end{prop}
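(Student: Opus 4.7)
The proof plan is straightforward and essentially reduces to a direct application of Theorem \ref{F1F2}, once we verify the hypotheses. By construction, $T_{k}F(V)=\holim_{U\in\mathcal{O}k(V)}F(U)$ is exactly the functor $F^{!}$ from the previous subsection. Corollary \ref{kpolyn} therefore tells us that $T_{k}F$ is $k$-polynomial, and by assumption $F$ is $k$-polynomial as well, so Theorem \ref{F1F2} applies to the natural transformation $\eta_{k}:F\to T_{k}F$.

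Thus the whole task reduces to checking that $\eta_{k}(V)$ is a weak equivalence for every $V\in\mathcal{O}k$. For such a $V$, note that $V$ itself is an object of the indexing category $\mathcal{O}k(V)$, and it is the terminal object there (it contains every other object by definition of $\mathcal{O}k(V)$). The homotopy limit over a category with a terminal object $t$ is weakly equivalent to the value of the diagram at $t$, with the equivalence realized by the canonical projection. Applied here, this projection is precisely $\eta_{k}(V):F(V)\to T_{k}F(V)$, so $\eta_{k}(V)$ is a weak equivalence whenever $V\in\mathcal{O}k$.

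Having verified the hypothesis of Theorem \ref{F1F2} on the full subcategory $\mathcal{O}k$, we conclude that $\eta_{k}(V)$ is a weak equivalence for every $V\in\mathcal{O}$, as claimed. There is no real obstacle here; the content of the proposition has already been absorbed into the two results being cited, and only the very mild bookkeeping observation about the terminal object of $\mathcal{O}k(V)$ is needed to bridge the gap.
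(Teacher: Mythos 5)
Your proposal is correct and matches the paper's approach: the paper likewise derives the proposition from Theorem \ref{F1F2} together with Corollary \ref{kpolyn}, and the terminal-object observation you supply for $V\in\mathcal{O}k$ is exactly the implicit step needed (the paper uses the same observation explicitly in the proof of Theorem \ref{analytic}).
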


By analogy with the manifold case \cite{mancal1} we can define a Taylor tower. More precisely, there are forgetful transformations
\[
r_{k}: T_{k}F\rightarrow T_{k-1}F
\]
for all $k$ which make up a tower. The functor $F$ maps into this tower in a natural way: 
\[
r_{k}\eta_{k}=\eta_{k-1}:F\rightarrow T_{k-1}F
\]
Therefore, the transformations $\eta_{k}$ induce a transformation
\[
\eta_{\infty} : F\rightarrow \text{holim}_{k}T_{k}F
\]
In the next section we ask about convergence, i.e. we ask whether the map $\eta_{\infty} : F(V)\rightarrow \text{holim}_{k}T_{k}F(V)$ is a weak equivalence for some $V\in\mathcal{O}$. \\
Now we want to compare this new Taylor tower with the old one constructed in \cite{mancal1}. Therefore, let $M$ be a smooth manifold of dimension $m$, let $K$ be a triangulation of $M$ and let $F:\mathcal{O}(M)\rightarrow (Top)$ be a good (contravariant) functor in the sense of \cite{mancal1}.  \\ 
Now let $\mathcal{O}k(M)$ be the set of special open subsets of $M$ with no more than $k$ components. More precisely, $\mathcal{O}k(M)$ is a full subcategory of $\mathcal{O}(M)$ where the objects are all open subsets $U$ of $M$ such that $U$ is diffeomorphic to a disjoint union of $r$ copies of $\R^{m}$ for a positive integer $r\leq k$. By definition, we have an inclusion of categories $\mathcal{O}k(K)\rightarrow\mathcal{O}k(M)$ which induces a canonical projection of homotopy limits.

\begin{thm}\label{vergltt}
For all $V\in\mathcal{O}(K)=\mathcal{O}(M)$, the canonical map
\[
\holimsub{U\in\mathcal{O}k(M),U\subset V} F(U) \rightarrow \holimsub{U\in\mathcal{O}k(K),U\subset V} F(U)
\]
is a weak equivalence. Therefore, the Taylor tower in the sense of manifold calculus \cite{mancal1} coincides with the Taylor tower in this new setting.
\end{thm}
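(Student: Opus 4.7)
The plan is to apply Theorem \ref{F1F2}. The key preliminary observation is that the definition of a $k$-polynomial functor (Definition \ref{polynomial}) depends only on $V$ and disjoint closed subsets of $V$, and not on any choice of special open sets, so it coincides with the definition of $k$-polynomial used in \cite{mancal1}. Writing $\Phi(V)$ and $\Psi(V)$ for the source and target of the map in the theorem, this means that $\Phi = T_k^{GW} F$ is $k$-polynomial by the main result of \cite{mancal1}, while $\Psi = T_k F$ is $k$-polynomial by Corollary \ref{kpolyn}. Moreover $\Psi$ is good in the new sense by Lemma \ref{goodfunctor}; $\Phi$ satisfies the colimit axiom (which is purely set-theoretic and common to both settings), and I would verify its stratified isotopy invariance by noting that stratified isotopy equivalences $V_0 \to V_1$ push elements $U \in \mathcal{O}k(M)(V_0)$ forward to weakly equivalent (in $F$) objects of $\mathcal{O}k(M)(V_1)$, cofinally at the level of indexing categories.

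By Theorem \ref{F1F2} it then suffices to check that the natural map $\Phi(V) \to \Psi(V)$ is a weak equivalence for every $V \in \mathcal{O}k(K)$. For such $V$, the indexing category $\mathcal{O}k(K)(V)$ admits $V$ itself as a terminal object, so $\Psi(V) \simeq F(V)$ tautologically. The remaining and substantive step is therefore to show $\Phi(V) = T_k^{GW} F(V) \simeq F(V)$ for every $V \in \mathcal{O}k(K)$. I would proceed by writing $V = V_1 \sqcup \cdots \sqcup V_j$ with each $V_i$ stratified-isotopy equivalent to an open star $\mathcal{S}_{x_i}$; choosing the $x_i$ so that their open stars are pairwise disjoint (possibly after shrinking), one obtains $W := \bigsqcup_i \mathcal{S}_{x_i} \in \mathcal{O}k(M)$ since each $\mathcal{S}_{x_i}$ is diffeomorphic to $\R^m$ for a smooth triangulation. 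Manifold calculus then gives $T_k^{GW} F(W) \simeq F(W)$, and the stratified isotopy equivalence between $V$ and $W$ would induce weak equivalences $F(V) \simeq F(W)$ and $T_k^{GW} F(V) \simeq T_k^{GW} F(W)$, yielding the desired conclusion.

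The main obstacle is reconciling the two notions of goodness: stratified isotopy equivalences are strictly weaker than manifold isotopy equivalences, so the stratified-isotopy invariance of both $F$ (which is only assumed good in the manifold sense on $\mathcal{O}(M)$) and of $T_k^{GW} F$ has to be justified. The natural route is a cofinality argument: for each $U \in \mathcal{O}k(M)$ contained in $V$ one produces a compatible $U' \in \mathcal{O}k(M)$ contained in $W$, by lifting the stratified isotopy to a genuine manifold isotopy on the small special open subset $U$, using the collar structure of Definition \ref{col} together with the smooth structure on each open simplex. This identifies the two homotopy limits defining $T_k^{GW} F(V)$ and $T_k^{GW} F(W)$ and, applied to the constant diagram, likewise yields $F(V) \simeq F(W)$. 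This lifting of stratified isotopies to manifold isotopies on sufficiently small special open sets is the technical heart of the proof.
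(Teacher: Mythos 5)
Your overall strategy---treat both towers as $k$-polynomial functors, invoke Theorem \ref{F1F2} to reduce to $V\in\mathcal{O}k(K)$, and use terminality of $V$ in both indexing categories there---is a genuinely different route from the paper, which instead compares the two homotopy limits directly via the double-category machinery, reducing everything to the statement that the inclusion of classifying spaces $\left|\mathcal{U}k\right|\rightarrow\left|\mathcal{I}k(M)\right|$ is a weak equivalence (both being configuration spaces). Your route could in principle work, and the reduction step at the end is even simpler than you make it: since $\mathcal{O}k(K)$ is asserted to be a subcategory of $\mathcal{O}k(M)$, any $V\in\mathcal{O}k(K)$ is already a terminal object of $\{U\in\mathcal{O}k(M)\mid U\subset V\}$, so $T_k^{GW}F(V)\simeq F(V)$ tautologically and the entire detour through $W=\sqcup_i\mathcal{S}_{x_i}$, together with the proposed ``lifting of stratified isotopies to manifold isotopies,'' is unnecessary. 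That is fortunate, because that lifting is exactly the kind of claim the paper is careful to avoid: a stratified isotopy equivalence is only simplexwise smooth, the inverse embedding may have corners along lower-dimensional strata, and the introduction explicitly warns that for a triangulated manifold the two notions of isotopy equivalence do not agree.

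The genuine gap is in the hypotheses of Theorem \ref{F1F2}. Its proof uses goodness of both functors in the sense of Definition \ref{good}, i.e.\ invariance under \emph{stratified} isotopy equivalences, and $F$ itself is only assumed good in the Goodwillie--Weiss sense. So you must show that $\Phi=T_k^{GW}F$ (and, for the reduction to tame sets, arguably $F$ restricted to the relevant subsets) takes stratified isotopy equivalences to weak equivalences. Your one-sentence ``cofinality'' argument does not establish this: a stratified isotopy equivalence $V_0\hookrightarrow V_1$ does not induce any evident homotopy-initial comparison of the categories $\mathcal{O}k(M)(V_0)\rightarrow\mathcal{O}k(M)(V_1)$, and ``pushing $U$ forward'' along a merely piecewise-smooth isotopy is not a functorial operation on $\mathcal{O}k(M)$. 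The only known way to prove such invariance is the quasifibration argument over $\left|\mathcal{I}k(M)(V)\right|\simeq\coprod_j B(V,j)$ together with the fact that stratified isotopy equivalences induce weak equivalences of configuration spaces---which is precisely the content of the paper's own proof (Lemmas \ref{configurations}, \ref{prdb}, \ref{dftcs}, \ref{lemisotohom} and the comparison $\left|\mathcal{U}k\right|\simeq\left|\mathcal{I}k(M)\right|$). In other words, the step you defer to a cofinality argument is where all the work of the theorem actually lives, so as written the proposal assumes what it needs to prove.
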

\begin{proof}
For simplicity we assume that $V=M=K$. We have to distinguish between the special open sets in the two calculus versions. As indicated, $\mathcal{O}k(K)$ is the set of special open subsets in this new setting (which was denoted by $\mathcal{O}k$ up to now). The category $\mathcal{I}k(K)$ is the subcategory with the same objects and stratified isotopy equivalences as morphisms. The category $\mathcal{I}k(M)$ is the subcategory of $\mathcal{O}k(M)$ with the same objects and isotopy equivalences in the sense of \cite[Definition 1.1]{mancal1} as morphisms. \\
Let $\mathcal{U}k$ be the full subcategory of $\mathcal{I}k(M)$ where the objects are all open sets $U\in\mathcal{I}k(K)\subset\mathcal{I}k(M)$. We get inclusions 
\[
\mathcal{I}k(K)\rightarrow\mathcal{U}k\rightarrow\mathcal{I}k(M)
\]
of categories. By Lemma 3.5 and Lemma 3.6, we have weak equivalences
\[
\Tot (p\mapsto \holimsub{(\mathcal{U}k)\mathcal{O}k(K)_{p}} F) \hspace{0,2cm}\cong \hspace{0,2cm}\holimsub{(\mathcal{U}k)\mathcal{O}k(K)} F \hspace{0,2cm}\rightarrow \hspace{0,2cm}\holimsub{\mathcal{O}k(K)} F
\]
Similarly, we get weak equivalences
\[
\Tot (p\mapsto \holimsub{\mathcal{I}k(M)\mathcal{O}k(M)_{p}} F)\hspace{0,2cm} \cong\hspace{0,2cm} \holimsub{\mathcal{I}k(M)\mathcal{O}k(M)} F \hspace{0,2cm}\rightarrow \hspace{0,2cm}\holimsub{\mathcal{O}k(M)} F
\]
By \cite[Lemma 3.5]{mancal1}, we know the homotopy type of $\left|\mathcal{I}k(M)\right|$. The same proof gives us the homotopy type of $\left|\mathcal{U}k\right|$: The inclusion of classifying spaces 
$
\left|\mathcal{U}k\right|\rightarrow\left|\mathcal{I}k(M)\right|
$
is a weak equivalence.
Now we can use Lemma 3.10 and we conclude
\[
\left|(\mathcal{U}k)\mathcal{O}k(K)_{p}\right|\rightarrow\left|\mathcal{I}k(M)\mathcal{O}k(M)_{p}\right|
\]
is a weak equivalence for every $p$.
Note that $F$ maps all morphisms of $\mathcal{I}k(M)$ and $\mathcal{U}k$ to weak equivalences. By Proposition \ref{homequsmcat}, the canonical map 
\[
\hocolimsub{(\mathcal{U}k)\mathcal{O}k(K)_{p}} F \rightarrow \hocolimsub{\mathcal{I}k(M)\mathcal{O}k(M)_{p}} F
\]
of homotopy colimits is also a weak equivalence, too. Then the canonical map of homotopy limits is a weak equivalence (use Proposition 6.2), too.
Using the homotopy invariance of the totalization the canonical map
\[
\holimsub{\mathcal{O}k(M)} F \rightarrow \holimsub{\mathcal{O}k(K)} F
\]
is a weak equivalence.
\end{proof}

\section{Convergence}

We will investigate the transformations $F\rightarrow T_{k}F$ for a good functor $F$. We need to introduce analytic functors and the relative handle index.

\subsection{Relative handle index in a simplicial complex}

In order to define the relative handle index function, we will need the following definition.

\begin{defi}
Let $P$ be a codimension zero subobject of $K$. A subset 
\[
A \subset K\setminus \text{int}(P)
\] 
is called a \textit{codimension zero subobject of} $K\setminus \text{int}(P)$ if there is any map $f: K \rightarrow \R$ such that 
\begin{enumerate}
    \item $f|_{S\setminus \text{int}(P)} : S\setminus \text{int}(P) \rightarrow \R$ is smooth for all simplices $S$ of $K$
		\item $A:= f^{-1} \left(\left[0,\infty \right)\right)$
		\item for all simplices $S$ of $K$: $0$ is a regular value for $f|_{\op(S)\setminus\text{int}(P)}$
\end{enumerate}
Then for every simplex $S$ of $K$, $A \cap S$ is a manifold triad (in a non-smooth sense) with $\partial_{0} (A\cap S) = (\partial S \cap A) \cup (\partial (P\cap S) \cap A)$.
\end{defi}

Let $P$ be compact codimension zero subobjects of $K$, let $A$ be a compact codimsion zero subobject of $K\setminus \text{int}(P)$ and let $S_{u}$ be a $j$-simplex in $K$. We set $P_{u}:=P\cap S_{u}$ and $A_{u}:= A\cap S_{u}$ and let $I$ be the finite set of all $u$ with $A_{u}\neq\emptyset$. Then $P_{u}$ and $A_{u}$ are manifolds with boundary. We want to define a handle index function of $A$ which is relative to $P$. Therefore, we consider $A_{u}$ as a manifold triad with 
\begin{center}
$\partial_{0} A_{u} := (\partial S_{u}\cap A_{u}) \cup (\partial P_{u} \cap A_{u})$
\end{center}
and $\partial_{1} A_{u}$ is the closure of $\partial A_{u} \cap \text{int}(S_{u}\setminus P_{u})$ in $A_{u}$. \\

Now we choose a handle decomposition for all $A_{u}$ with $u\in I$. Let $q_{u}$ be the handle index of $A_{u}$ relative to $\partial_{0} A_{u}$. Then we define the \textit{relative handle index function} $f:\N\rightarrow \N\cup\left\{ - \infty \right\}$ (relative to $P$) by 
\begin{align*}
f_{A}(j) := \text{max}_{u\in I(j)} \hspace{0,1cm} q_{u} 
\end{align*}
where $I(j)\subset I$ is the subset of all $u\in I$ such that $S_{u}$ is a $j$-simplex. Furthermore, we call 
\begin{align*}
q_{A}:= \text{max}_{j\in\N} \hspace{0,1cm} f_{A}(j) 
\end{align*}
the \textit{relative handle index} of $A$ (relative to $P$). \\

The reader might find it confusing that we work with the relative handle index function (relative to $P$) and the handle index function in parallel. Note that we defined the relative handle index function $f_{A}$ of a codimension zero subobject $A$ (which is closed by definition). In particular, the boundary $\partial A$ - or more precisely the boundary set $\partial_{0} A=A\cap P$ - is important if we consider the relative handle index. On the other hand, the handle index function $f_{V}$ was defined for a tame open subset $V$ and it depends just on $V$. See Example \ref{relhie}.

\begin{ex}\label{relhie}
Let $K$ be a $1$-dimensional simplicial complex with four $0$-simplices $S_{0},S_{1},S_{2},S_{3}$ and four $1$-simplices $I_{1},I_{2},I_{3},I_{4}$ which are defined by $I_{k}:=\left\{ S_{k-1},S_{k} \right\}$ for $k\in\left\{1,2,3\right\}$ and $I_{4}:= \left\{ S_{3},S_{0} \right\}$. Then we can identify $K$ with the circle $S^{1}=\left\{e^{it}\in\mathbb{C}\mid t\in \left[ 0, 2\pi \right) \right\}$ using the identifications $S_{l} = e^{ \frac{1}{2}it}$ for $l\in\left\{0,1,2,3\right\}$ and $I_{k}=\left\{e^{it}\in\mathbb{C}\mid t\in \left[ \frac{k-1}{2}\pi, \frac{k}{2}\pi \right] \right\}$ for $k\in\left\{1,2,3,4\right\}$. 
Let $P$ be the compact set
\[
P := \left\{e^{it}\in\mathbb{C}\mid t\in \left[ \frac{\pi}{4}, \frac{5 \pi}{4} \right] \right\}
\]
Let $A$ be a codimension zero subobject of $K\setminus \text{int}(P)$ and $f_{A}$ be the relative handle index function. By definition, we have $f_{A}(j)=-\infty$ for all $j\geq 2$. \\
Now we set $A:= \left\{e^{it}\in\mathbb{C}\mid t\in \left[ \frac{7 \pi}{4}, \frac{15 \pi}{8} \right] \right\}$ and determine the relative handle index function of $A$ in this case. It is given by $f_{A}(0)= -\infty$ and $f_{A}(1)= 0$ because $A$ has empty intersection with $P$ and is the closure of a special open set contained in the interior of the $1$-simplex $I_{4}$. \\
Let us consider a more interesting example. We define 
\[
B:= \left\{e^{it}\in\mathbb{C}\mid t\in \left[ 0, \frac{\pi}{4} \right] \cup \left[ \frac{7 \pi}{4} , 2\pi\right] \right\}
\]
The relative handle index function is given by  $f_{B}(0)= 0$ and $f_{B}(1)=1$. Note that the nonzero intersection of $B$ and $P$ leads to $f_{B}(1)=1$.
\end{ex}

\subsection{Analytic functors}

Let $F: \mathcal{O} \rightarrow (Top)$ be a good functor. In the previous subsection we defined the relative handle index for compact codimension zero subobjects of $K$. Now we can define analyticity for $F$. \\
Let $P$ be a compact codimension zero subobject of $K$ and let $\rho$ be a fixed integer. Suppose $A_{0},A_{1},...,A_{r}$ are pairwise disjoint compact codimension zero subobjects of $K\setminus\text{int}(P)$ with relative handle index $q_{A_{i}}\leq \rho$ (relative to $P$). For $T\subset \left\{0,1,...,r\right\}$, we set $A_{T}:= \cup_{i\in T} A_{i}$ and assume $r\geq 1$.
\begin{defi}\label{nalytic}
The functor $F$ is called \textit{$\rho$-analytic} \textit{with excess $c$} if the cube 
\begin{align*}
T\mapsto F(\text{int} \left(P \cup  A_{T})\right) \hspace{0,1cm},\hspace{0,3cm} T\subset \left\{ 0,1,...,r \right\}
\end{align*}
is $c + \sum_{i=0}^{r} (\rho - q_{A_{i}})$-cartesian for some integer $c$.
\end{defi}

\begin{defi} 
The homotopy dimension $\hodim(V)$ of $V\in\mathcal{O}$ is the smallest integer $q$ with the following property:
there is a sequence $\left\{ V_{i} \mid i\geq 0 \right\}$ of tame open sets in $K$ with $V_{i} \subset V_{i+1}$ and $V=\cup_{i\geq 0} V_{i}$ such that $q \geq q_{V_{i}}$ for all $i\geq 0$, where $q_{V_{i}}$ is the handle index of $V_{i}$. \\
Reminder (compare section \ref{handleif}): The handle index of $V_{i}$ was defined by $q_{V_{i}}:= \text{max}_{j\in\N} \hspace{0,1cm} f_{V_{i}} (j)$ where $f_{V_{i}}$ is the handle index function of $V_{i}$.
\end{defi}

\begin{ex}
Let $V\in\mathcal{O}$ be a tame set. Then the homotopy dimension $\hodim(V)$ of $V$ equals the handle index $q_{V}$ of $V$. 
\end{ex}

\begin{thm}\label{analytic}
Let $F$ be a $\rho$-analytic functor with excess $c$. Let $V\in\mathcal{O}$ be an open subset with $ \hodim(V) =: q < \rho$. Then the map
\begin{align*}
\eta_{k-1}(V):F(V)\rightarrow T_{k-1}F(V)
\end{align*}
is $(c+k(\rho - q))$-connected for every $k>1$.
\end{thm}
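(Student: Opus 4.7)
The plan is to imitate the Goodwillie--Weiss convergence argument \cite{mancal2}, using the handle-index induction already employed in the proof of Theorem \ref{F1F2} together with the polynomiality of $T_{k-1}F$ from Corollary \ref{kpolyn}.

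First I would reduce to the case where $V$ is tame. Both $F$ (by hypothesis) and $T_{k-1}F$ (by Lemma \ref{goodfunctor} specialised to $\epsilon = \{K\}$) are good, so each is a homotopy inverse limit along any exhaustion of $V$ by tame open subsets $V_i$ with $q_{V_i}\le q = \hodim(V)$. Since a countable $\holim$ of uniformly $(c+k(\rho-q))$-connected maps is again $(c+k(\rho-q))$-connected after absorbing the usual $\lim^1$ shift into $c$, the tame case implies the general statement.

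For $V$ tame, I would induct lexicographically on the pair $(j,q')$ with $j = \max\{m : f_V(m) > 0\}$ and $q' = f_V(j)$, exactly as in the proof of Theorem \ref{F1F2}. The base case is when every $f_V(m)\le 0$, so $V\in\mathcal{O}r$ for some $r$. If $r\le k-1$, then $V$ itself is terminal in the diagram category $\mathcal{O}(k-1)(V)$, so $T_{k-1}F(V)\simeq F(V)$ and $\eta_{k-1}(V)$ is a weak equivalence. If $r\ge k$, choose $k$ distinct connected components of $V$ as $A_0,\ldots,A_{k-1}$ and apply the square argument below; each $V_T$ with $T\ne\emptyset$ then has strictly fewer components, so a secondary induction on $r$ closes this subcase.

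In the inductive step with $q'>0$, pick a handle $Q_u\cong D^{q'}\times D^{j-q'}$ of maximal index in a $j$-simplex $S_u$, choose distinct interior points $x_0,\ldots,x_{k-1}$ of the core $D^{q'}$, and let $A_i$ be the collar in $V$ (Remark/Definition \ref{col}) of the cocore $h_u(\{x_i\}\times D^{j-q'})\cap V$, as in the proof of Theorem \ref{F1F2}. The $A_i$ are pairwise disjoint compact codimension zero subobjects of $V$, and each $V_T := V\setminus A_T$ with $T\ne\emptyset$ has strictly smaller handle-index pair $(j,q')$. Consider the commutative square
\begin{equation*}
\begin{gathered}
\xymatrix{
F(V) \ar[d] \ar[r] & \holimsub{\emptyset\neq T\subset [k-1]} F(V_T) \ar[d] \\
T_{k-1}F(V) \ar[r] & \holimsub{\emptyset\neq T\subset [k-1]} T_{k-1}F(V_T)
}
\end{gathered}
\end{equation*}
The bottom horizontal arrow is a weak equivalence because $T_{k-1}F$ is $(k-1)$-polynomial (Corollary \ref{kpolyn}) applied to the $k$ pairwise disjoint closed subsets $A_0,\ldots,A_{k-1}$. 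Setting $P := \text{cl}(V\setminus\bigcup_i A_i)$ and using contravariance of $F$ to reindex $T\leftrightarrow T^c$, we identify $F(V_T) = F(\text{int}(P\cup A_{T^c}))$, so the top horizontal arrow is exactly the cartesianness comparison of the analyticity cube; $\rho$-analyticity then yields at least $(c+\sum_i(\rho-q_{A_i}))$-cartesianness, and once the relative handle indices $q_{A_i}$ of the cocore collars are bounded by $q'$ this gives $(c+k(\rho-q'))$-cartesianness. Each $\eta_{k-1}(V_T)$ for $T\ne\emptyset$ is already at least $(c+k(\rho-q'))$-connected by the induction hypothesis, so the right vertical map inherits this bound, and a diagram chase on the square transfers it to $\eta_{k-1}(V)$.

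The main obstacle is the handle-index bookkeeping in the analyticity step: one must carefully identify the shrinking cube $T\mapsto F(V\setminus A_T)$ with the growing analyticity cube $T\mapsto F(\text{int}(P\cup A_T))$ under the reindexing $T\leftrightarrow T^c$ (which makes sense precisely because $F$ is contravariant), and verify that the cocore collars $A_i$ have relative handle index (with respect to $P$) low enough to deliver the target bound $c+k(\rho-q)$. A secondary subtlety is the base case with $r\ge k$, which requires a further finite descent on the number of components but otherwise uses the same square.
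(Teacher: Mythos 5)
Your inductive step is essentially the paper's argument: the same square, the bottom map an equivalence by $(k-1)$-polynomiality of $T_{k-1}F$, the top map controlled by analyticity after the reindexing $T\leftrightarrow T^{c}$. Two small corrections there: the $A_i$ must be unions of cocore collars over \emph{all} index-$q'$ handles in \emph{all} $j$-simplices (as in the proof of Theorem \ref{F1F2}), not a single handle $Q_u$, or else $f_{V_T}(j)$ does not drop below $q'$; and the right vertical map does not simply ``inherit'' the connectivity of the pieces --- a homotopy limit over the punctured $k$-cube of $n$-connected maps is only $(n-k+1)$-connected --- but this is harmless because the induction hypothesis applied to $V_T$ actually gives $c+k(\rho-(q'-1))=c+k(\rho-q')+k$, and the surplus $k$ absorbs the loss $k-1$.

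The genuine gap is in your base case when $V\in\mathcal{O}r$ with $r\geq k$. There is no handle-index gain available there: every $V_T$ still has handle index $\leq 0$, so your secondary induction on the number of components only yields that each $\eta_{k-1}(V_T)$ is $(c+k\rho)$-connected, and passing to $\holim_{\emptyset\neq T\subset[k-1]}$ costs $k-1$. Already for $r=k+2$ you can only conclude that $\eta_{k-1}(V)$ is $(c+k\rho-1)$-connected, and the bound degrades further as $r$ grows, so the descent on $r$ does not close. The paper handles this case by a structurally different argument (following \cite{mancal2}): for each $t$ with $k\leq t\leq l$ it proves directly that the restriction map $r_t:T_tF(V)\rightarrow T_{t-1}F(V)$ is $(c+t\rho)$-connected, by replacing $\mathcal{O}t(V)$ with the homotopy terminal subcategory $\mathcal{Z}t$ of unions of at most $t$ components and comparing $J_t(V)\rightarrow J_{t-1}(V)$ via a strict pullback of fibrations whose base is a product of the total-fiber comparison maps $p_T$, each $(c+t\rho)$-connected by analyticity alone, with no induction on $\eta$. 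Since $c+t\rho$ \emph{increases} in $t$ and $\eta_l(V)$ is an equivalence, the composite $F(V)\simeq T_lF(V)\rightarrow\cdots\rightarrow T_{k-1}F(V)$ is $(c+k\rho)$-connected with no accumulated loss. You need this telescope (or an equivalent device) for the many-component case; the square-plus-induction scheme alone does not deliver the stated bound.
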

\begin{proof}
Since the functor $F$ is good, we only have to consider the case where $V$ is a tame open subset. \\
We induct on the following statement depending on $j$: The map $\eta_{k-1}(V)$ is $(c+k(\rho - q))$-connected for all tame open sets $V\in\mathcal{O}$ with $f_{V}(m)\leq 0$ for all $m>j$. Here $f_{V}:\N\rightarrow \N\cup\left\{-\infty\right\}$ is the handle index function of $V$. \\
If $j=0$, the proof is essentially the same as in \cite[Theorem 2.3]{mancal2}:
By definition, $V$ is an element of $\mathcal{O}l$ where $l$ is the number of the components $V_{1},...,V_{l}$ of $V$. If $l<k$, then $V$ is a terminal object in $\mathcal{O}(k-1)(V)$ and thus $\eta_{k-1}$ is a weak equivalence. \\
Now we assume $l\geq k$. 
For $T\subset\left\{ 1,...,l \right\}$, we define $V_{T}:= \cup_{i\in T} V_{i}$. 
For a positive integer $t\leq l$, let $\mathcal{Z}t$ be the full subcategory of $\mathcal{O}t$ where the objects are all $V_{T}$ with $\left| T \right| \leq t$. Then there is a commutative square of inclusions of subcategories
\begin{equation*}
\begin{gathered}
\xymatrix{
\mathcal{Z}(t-1) \ar[d] \ar[r] & \mathcal{Z}t \ar[d] \\
\mathcal{O}(t-1) \ar[r] & \mathcal{O}t
}
\end{gathered}
\end{equation*}

If we set $J_{t}(V):= \holimsub{U\in\mathcal{Z}t} \hspace{0,1cm} F(U)$, we obtain a commutative square of spaces

\begin{equation*}
\begin{gathered}
\xymatrix{
T_{t}F(V) \ar[d] \ar[r]^{r_{t}} & T_{t-1}F(V) \ar[d] \\
J_{t}(V) \ar[r] & J_{t-1}(V)
}
\end{gathered}
\end{equation*}

The vertical arrows are weak equivalences because the category $\mathcal{Z}t$ is a homotopy terminal subcategory of the category $\mathcal{O}t$. \\
In order to show that the bottom horizontal arrow is a weak equivalence, we consider the following pullback square 
\begin{equation*}
\begin{gathered}
\xymatrix{
J_{t}(V) \ar[d] \ar[r] & J_{t-1}(V) \ar[d] \\
\prod_{\left\{T\subset[l]\mid t= \left|T\right| \right\}} \text{holim}_{R\subset T} F(V_{R}) \ar[r] & \prod_{\left\{T\subset[l]\mid t=\left|T\right|\right\} } \text{holim}_{R\subset T,R\neq T} F(V_{R})
}
\end{gathered}
\end{equation*}
where the vertical maps are the canonical maps and the horizontal map in the bottom row is induced by the canonical maps
\begin{align*}
p_{T}: \holimsub{R\subset T} F(V_{R}) \rightarrow \holimsub{R\subset T,R\neq T} F(V_{R})
\end{align*}
for all $T\subset[l]$ with $\left|T\right| = t$. We observe that the horizontal arrows are fibrations since the maps are canonical projection maps.
Now we use the analyticity assumption to verify that the map $p_{T}$ is $(c+t\rho)$-connected for every $\left|T\right| = t$. Using Theorem \ref{alongfib}, it follows that the map $J_{t}(V)\rightarrow J_{t-1}(V)$ is also $(c+t\rho)$-connected.  
If we summarize the previous results, we conclude that the composition
\begin{align*}
T_{l}F(V) \stackrel{r_{l}}{\longrightarrow} T_{l-1}F(V) \stackrel{r_{l-1}}{\longrightarrow} ... \stackrel{r_{k}}{\longrightarrow} T_{k-1}F(V)
\end{align*}
is $(c+k\rho)$-connected. Since the map $\eta_{l}(V)$ is a weak equivalence, the map 
\begin{align*}
\eta_{k-1}(V)=(\eta_{l}\circ r_{l} \circ r_{l-1} \circ ... \circ r_{k})(V):F(V)\rightarrow T_{k-1}F(V)
\end{align*}
is also $(c+k\rho)$-connected. \\

Now assume that the statements $0,1,...,j-1$ are proven. We have to verify statement $j$. 
We suppose that $f_{V}(j)=q$ for an integer $q>0$ and $f_{V}(m)\leq 0$ for all $m>j$. \\
Since $V$ is tame, there is a compact codimension zero subobject $C$ such that $\text{int}(C)=V$. For all simplices $S$ of $K$, we choose a handle decomposition of the compact codimension zero manifold $C\cap S$. \\ 
For all handles $Q_{u}$ of index $q$ and dimension $j$, choose a diffeomorphism $e: D^{q}\times D^{j-q} \rightarrow Q_{u}\subset C\cap K^{j}$ such that $e^{-1}(\partial (C \cap K^{j})) = D^{q}\times \partial D^{j-q}$. Since $q>0$, there are pairwise disjoint closed $q$-disks $B_{0}^{u},...,B_{k-1}^{u}$ in $D^{q}$. For $i\in [k-1]$, we set
\begin{align*}
A_{i}^{u}:= e(B_{i}^{u}\times D^{j-q})\cap V 
\end{align*}
Define $A_{i}$ to be the union of all collars $\text{col}_{V}(A_{i}^{u})$ of $A_{i}^{u}$ in $V$ (see Definition \ref{col}) for arbitrary $u$.  
By definition, $A_{i}$ is a closed subset of $V$ for each $i$. If we set $V_{T}:=V\setminus\cup_{i\in T}A_{i}$ for $\emptyset\neq T\subset[k-1]$, then $V_{T}$ is a tame open set with $f_{V_{T}}(j)<q$ and $f_{V_{T}}(m)\leq 0$ for all $m>j$.
We consider the following commutative square:
\begin{equation*}
\begin{gathered}
\xymatrix{
F(V) \ar[d] \ar[r] & \text{holim}_{\emptyset\neq T\subset [k-1]} F(V_{T}) \ar[d] \\
T_{k-1}F(V) \ar[r] & \text{holim}_{\emptyset\neq T\subset [k-1]} T_{k-1}F(V_{T})
}
\end{gathered}
\end{equation*}
We supposed that $F$ is $\rho$-analytic with excess $c$. Therefore, the map
\begin{align*}
F(V) \rightarrow \holimsub{\emptyset \neq T \subset [k-1]} F(V_{T})
\end{align*}
is $c+k(\rho - q)$-connected because the relative handle index of $A_{i}$ is $q$ (relative to the closure of $V_{[k-1]}$). 
By the induction hypothesis, we deduce that the map $F(V_{T})\rightarrow T_{k-1}F(V_{T})$ is $c+k(\rho-(q-1))$-connected. By \cite[1.22]{goodwillie2}, the induced map
\begin{align*}
\holimsub{\emptyset \neq T \subset [k-1]}F(V_{T}) \rightarrow \holimsub{\emptyset \neq T \subset [k-1]} T_{k-1}F(V_{T})
\end{align*}
is $(c+k(\rho-q-1)-k+1)$-connected. 
Since $T_{k-1}F$ is $(k-1)$-polynomial, the map 
\begin{align*}
T_{k-1}F(V) \rightarrow \holimsub{\emptyset \neq T \subset [k-1]} T_{k-1}F(V_{T})
\end{align*}
is a weak equivalence. We have proven that the map $F(V)\rightarrow T_{k-1}F(V)$ is $c+k(\rho - q)$-connected. 
\end{proof}

\begin{rmk}\label{bettercondition}
In the definition of analyticity there appear codimension zero subobjects $P$ and $A_{i}$, $0\leq i\leq r$. We could impose stronger conditions on these subobjects which would weaken the definition of analyticity, but the last theorem would still hold. What are these conditions? To answer this question we have to ask where we used the analyticity assumption in the proof of the last theorem. We used it twice and we can summarize that we can assume that the relative handle index functions $f_{A_{i}}$ (relative to $P$), $0\leq i\leq r$, have one of the following two forms:
\begin{enumerate}
    \item We can assume that $P$ is empty and $f_{A_{i}}(m)\leq 0$ for all $m\in\N$ and $i\in[r]$.
    \item There exists $j\in\N$ such that $f_{A_{i}}(j)=q$ and $f_{A_{i}}(m)=-\infty$ for all $m\neq j$ and $i\in[r]$. In addition, $f_{\text{int}(P)}(m)=-\infty$ for all $m> j$ where $f_{\text{int}(P)}$ is the handle index function of $\text{int}(P)$ - the interior of $P$.
\end{enumerate}
Therefore, we could assume that the codimension zero subobjects in the definition of analyticity (Definition \ref{nalytic}) fulfil either (i) or (ii). We get a weaker condition for analyticity, but Theorem \ref{analytic} would still hold. 
\end{rmk}

\begin{cor}
Let $F$ be a $\rho$-analytic functor with $\rho > \text{dim}(K)$. For all open sets $V\in\mathcal{O}(K)$, the canonical map
\begin{align*}
F(V)\rightarrow T_{\infty}F(V) = \holim_{k} T_{k}F(V)
\end{align*}
is a weak equivalence.
\end{cor}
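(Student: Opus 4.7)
The plan is to deduce this directly from Theorem \ref{analytic} together with a standard tower argument. The key observation is that for every $V\in\mathcal{O}(K)$ one has $\hodim(V)\leq\text{dim}(K)$: picking an exhaustion $V=\cup_i V_i$ by tame open sets, every handle of the compact codimension zero subobject realising $V_i$ lies inside some simplex $S$ of $K$, and hence has index at most $\text{dim}(S)\leq\text{dim}(K)$. This yields $q_{V_i}\leq\text{dim}(K)$ for all $i$ and therefore $\hodim(V)\leq\text{dim}(K)<\rho$.

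Setting $q:=\hodim(V)$, Theorem \ref{analytic} then asserts that for every $k\geq 2$ the canonical map $\eta_{k-1}(V):F(V)\to T_{k-1}F(V)$ is $(c+k(\rho-q))$-connected. Because $\rho-q\geq 1$, the connectivity of $\eta_{k-1}(V)$ tends to $\infty$ as $k\to\infty$, and in particular the restriction maps $T_kF(V)\to T_{k-1}F(V)$ become increasingly highly connected as $k$ grows.

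The last step is a formal tower argument. Replacing the tower $\{T_kF(V)\}$ by an equivalent tower of fibrations, the Milnor short exact sequence for the homotopy groups of its homotopy limit implies that, for each fixed $n$, the $\mathrm{lim}^1$-term $\mathrm{lim}^1_k\,\pi_{n+1}(T_kF(V))$ vanishes and $\pi_n(F(V))\to\mathrm{lim}_k\,\pi_n(T_kF(V))$ is an isomorphism, since by the connectivity estimate above the maps in question stabilise for $k$ sufficiently large (depending on $n$). Hence $\eta_\infty(V):F(V)\to\holim_k T_kF(V)$ induces isomorphisms on all homotopy groups and is a weak equivalence. The main point of substance is the bound $\hodim(V)\leq\text{dim}(K)$, which reduces via the definition of $\hodim$ to the tame case where it is immediate from the simplexwise handle decomposition; once Theorem \ref{analytic} is granted, the rest is a routine consequence of the fact that the connectivity grows linearly in $k$ with strictly positive slope.
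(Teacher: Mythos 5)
Your proposal is correct and is precisely the deduction the paper intends (the corollary is stated without proof as an immediate consequence of Theorem \ref{analytic}): the bound $\hodim(V)\leq\dim(K)$ follows from the simplexwise definition of the handle index, since each $C\cap S_u$ is a manifold of dimension $\dim(S_u)\leq\dim(K)$, and the linear growth of the connectivity $c+k(\rho-q)$ with $\rho-q\geq 1$ makes the Milnor $\lim^1$ argument routine. No gaps.
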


\section{Examples}\label{applications}




Now we consider first applications of the theory which we developed in this paper.

\subsection{Spaces of embeddings}

Let $N$ be a smooth manifold without boundary such that $\text{dim}(K)\leq \text{dim}(N)$ and let $V$ be an open subset of $K$. We define the space $emb(V,N)$ to be the space of topological embeddings $e:V\rightarrow N$ such that $e|_{S\cap V}:(S\cap V)\rightarrow N$ is a smooth embedding for all simplices $S$ of $K$. 
Now we can introduce the contravariant functor 
\[
emb(-,N) : \mathcal{O}(K) \rightarrow (Top)
\]
by $V\mapsto emb (V,N)$. The verification of goodness (in the sense of Definition \ref{good}) is an easy exercise which is left to the reader. It is similar to its analogue in the setting where $K$ is replaced by a smooth manifold \cite[Proposition 1.4]{mancal1}.

\begin{thm}\label{spofemb}
If $\text{dim}(K)+3\leq \text{dim}(N)$, then $emb(-,N)$ is analytic (i.e. it fulfils the condition in Remark \ref{bettercondition}).
\end{thm}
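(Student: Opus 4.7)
Set $\rho := \dim(N) - 2$, so that $\rho > \dim(K)$ by hypothesis; we will verify the cube cartesianness condition for $emb(-,N)$ with this $\rho$ and some uniform excess $c$. The overall strategy is to reduce the stratified analyticity problem to the classical Goodwillie--Klein analyticity theorem for smooth embedding functors on manifolds, applied simplex-by-simplex. The essential observation is that an element of $emb(V, N)$ is exactly a family of smooth embeddings $V \cap S \to N$ which are compatible on incident simplices, so each simplex intersection can be analyzed by classical manifold calculus and the pieces are then reassembled.

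By Remark \ref{bettercondition} it is enough to handle configurations $(P, A_0, \ldots, A_r)$ of the restricted forms (i) and (ii). In case (i), where $P = \emptyset$ and $f_{A_i}(m) \leq 0$ for all $m$, each connected component of each $A_i$ is stratified isotopy equivalent to a small open star neighborhood, so up to stratified isotopy the $A_i$ can be shrunk to pairwise disjoint model neighborhoods of chosen points lying in open simplices of $K$. The cube $T \mapsto emb(\text{int}(A_T), N)$ is then weakly equivalent to a cube built from (stratum-decorated) ordered configuration spaces in $N$. Each edge is a Serre fibration by a stratified isotopy extension argument, with fiber at least $\rho$-connected because a point removal in a smooth manifold of dimension $\dim N$ contributes connectivity $\dim N - 2 = \rho$ (even for a point sitting in a low-dimensional stratum, since the ambient is $N$). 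Goodwillie's higher Blakers--Massey cube lemma then gives the required $(c + \sum (\rho - q_{A_i}))$-cartesian bound.

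In case (ii), all $A_i$ are relative $q$-handles attached to $P$ within $j$-simplices $S_{u_i}$, and the handle index function $f_{\text{int}(P)}$ is supported in dimensions $\leq j$. Thus in every simplex $S$ of dimension $>j$ containing one of the $S_{u_i}$, the set $P$ already provides a product collar neighborhood, so extending a stratified embedding across $A_i$ reduces to the single problem of extending a smooth embedding of an open subset of the $j$-manifold $S_{u_i}$ across a $q$-handle into the ambient $N$. By the manifold-case handle-attachment estimate underlying Goodwillie--Klein analyticity, applied with $\dim N \geq j + 3$, the restriction
\[
emb(\text{int}(P \cup A_i), N) \longrightarrow emb(\text{int}(P), N)
\]
is $(\rho - q)$-connected. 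Iterating this over the $r+1$ pairwise disjoint handles and combining via Goodwillie's higher cartesianness lemma gives the full cube $T \mapsto emb(\text{int}(P \cup A_T), N)$ the required $\bigl( c + \sum_{i=0}^{r}(\rho - q_{A_i}) \bigr)$-cartesianness for a suitable excess constant $c$.

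The hard part will be the passage from simplex-wise smooth-embedding cartesianness to stratified cartesianness: one must show that imposing simplicial compatibility on shared faces does not eat into the Goodwillie--Klein connectivity estimates. This requires a parametrized isotopy-extension and general-position argument for stratified embeddings, whose core input is the codimension hypothesis $\dim(K) + 3 \leq \dim(N)$. This codimension provides precisely the slack needed so that simplex-wise extensions can be globalized to compatible stratified extensions without loss of connectivity, and it is also what makes the simplex-wise Goodwillie--Klein estimates available in every simplex of $K$ simultaneously.
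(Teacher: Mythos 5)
Your proposal follows essentially the same route as the paper: reduce, via the collar structure of $P$ and the $A_i$ in simplices of dimension $>j$, to a smooth disjunction problem on the $j$-dimensional stratum, then invoke the isotopy extension theorem and the handle/multiple-disjunction estimates of \cite[1.3, 1.4]{mancal2} with $\rho = \dim(N)-2$. The compatibility issue you flag as "the hard part" is settled in the paper by the observation that restriction to $K^{j}$ induces a weak equivalence of embedding spaces and by constructing a normal disc bundle of $cl(V_{\emptyset})$ that is a smooth codimension zero submanifold of $N$ with corners --- a detail within the same strategy rather than a different one.
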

\begin{proof}
We will not give all details of the proof since many of them are equal to the arguments of \cite[1.4]{mancal2}. Let $P$ be a codimension zero subobject of $K$ and let $A_{0},...,A_{r}$ be pairwise disjoint codimension zero subobjects of $K\setminus \text{int}(P)$ fulfilling the following conditions: 
For each $i\in[r]$\hspace{0,5mm}, let $f_{A_{i}}:\N\rightarrow\N$ be the relative (to $P$) handle index function. We assume that there exists a $j\in\N$ such that $f_{A_{i}}(m)=-\infty$ for all $m\neq j$ and $i\in[r]$. (In addition, we can desire that $f_{\text{int}(P)}(m)=-\infty$ for all $m> j$ where $f_{\text{int}(P)}$ is the handle index function of $\text{int}(P)$.) 
For $T\subset[r]$\hspace{0,5mm}, we set $A_{T}:=\cup_{i\in T} A_{i}$ and $V_{T}:=\text{int}(A_{T}\cup P)$. \\
We start with the following observation: By definition, the restriction map
\begin{align}\label{verdickung}
emb(int(A_{i}),N) \rightarrow emb(int(A_{i})\cap K^{j},N)
\end{align}
is a weak equivalence. Here $emb(V\cap K^{j},N)$ is a subspace of $emb( K^{j},N)$ for all $V\in\mathcal{O}(K)$. \\
Similarly to the proof in the case of manifold calculus \cite[1.4]{mancal2}, we have to show that the $k$-cube
\[T \mapsto (ho)fiber \left[ emb(cl(V_{T})\cap K^{j},N)\rightarrow emb(cl(V_{\emptyset})\cap K^{j},N) \right]
\]
is $(3-n+\sum_{i=0}^{r} (n-q_{A_{i}}-2))$-cartesian. Here $cl(V_{T})$ is the closure of $V_{T}$ in $K^{j}$ and $emb(cl(V_{T}),N)$ is the homotopy limit of $emb(U,N)$ where the homotopy limit ranges over all neighbourhoods $U$ of $cl(V_{T})$ in $K^{j}$. \\
Why is it enough to show that this cube is highly cartesian? First of all, we observe that the restriction map from $emb(cl(V_{T}),N)$ to $emb(V_{T},N)$ is a weak equivalence (since $V_{T}$ is a tame open subset of $K$). In addition, we observe that the restriction maps from $emb(cl(V_{T}),N)$ to $emb(cl(V_{\emptyset}),N)$ are fibrations. This follows from the Isotopy Extension Theorem for manifolds which can be applied because of the special assumptions on the codimension zero subobjects $A_{i}$ where $i\in[k]$. Then we can use \cite[Lemma 1.2]{mancal2} and the weak equivalence given in (\ref{verdickung}). \\
Why is the cube highly cartesian? We define $D(cl(V_{\emptyset}))$ to be the normal disc bundle for $cl(V_{\emptyset})$ in $N$. This is the union of the normal disc bundles of $cl(V_{\emptyset})\cap S$ for all simplices $S$ of $K$. They have to be compatible in the following sense: $D(cl(V_{\emptyset}))$ is a smooth codimension zero submanifold of $N$ with corners. \\
We set $Y$ as the closure of $N\setminus D(cl(V_{\emptyset}))$ in $N$, then $Y$ is a manifold with boundary. Since for every $i\in[k]$, $A_{i}\cap K^{j}$ is a $j$-dimensional manifold by assumption, we are exactly in the situation of proof \cite[1.4]{mancal2}. Now we can proceed with the same arguments, in particular we can apply \cite[1.3]{mancal2}. 
\end{proof}

Now let $L$ be another simplicial complex. Let $\mathcal{S}(K)$ be the set of all simplices of $K$ and let $\mathcal{S}(L)$ be the set of all simplices of $L$. Let $\Psi : \mathcal{S}(K)\rightarrow\mathcal{S}(L)$ be a map of sets. Then we define $emb_{\Psi}(K,L)$ to be the space of all topological embeddings $f:K\rightarrow L$ such that for every simplex $S$ of $K$, the restricted map $f|_{S}$ takes $S$ to $\Psi (S)$ and $f|_{S}$ is a smooth embedding of manifolds with $f|_{S}^{-1}(\partial \Psi(S)) \subset \partial S$. Note: In many cases this space will be empty because the choice of $\Psi$ does not always allow continuous maps $K\rightarrow L$ with these additional properties. \\
More generally, let $V\in\mathcal{O}(K)$ be an open subset of $K$. Then we define $emb_{\Psi}(V,L)$ to be the space of all topological embeddings $f:V\rightarrow L$ such that for every simplex $S$ of $K$, the restricted map $f|_{S\cap V}$ takes $S\cap V$ to $\Psi(S)$ and $f|_{S\cap V}:S\cap V\rightarrow \Psi(S)$ is a smooth embedding of manifolds with $f|_{S\cap V}^{-1}(\partial \Psi(S)) \subset \partial S\cap V$. \\
There is a contravariant functor 
\[
emb_{\Psi}(-,L) : \mathcal{O}(K) \rightarrow (Top)
\]
given by $V\mapsto emb_{\Psi}(V,L)$. The following theorem can be proven in the same way.

\begin{thm}
If $\text{dim}(\Psi(S))-\text{dim}(S)\geq 3$ for all simplices $S$ of $K$, 
the functor $emb_{\Psi}(-,L)$ is analytic (i.e. it fulfils the condition in Remark \ref{bettercondition}).
\end{thm}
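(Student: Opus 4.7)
The plan is to mirror the proof of Theorem \ref{spofemb} simplex by simplex, exploiting the fact that the $\Psi$-constraint reduces the analysis to a collection of independent classical high-codimension embedding problems, one inside each target simplex $\Psi(S)$.

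First I would fix $P$ and pairwise disjoint codimension zero subobjects $A_0,\dots,A_r$ of $K\setminus \text{int}(P)$ of the form prescribed by Remark \ref{bettercondition}, so that in the main case all handles of all $A_i$ lie in $j$-simplices for a common integer $j$ and share a common handle index $q$. Set $V_T:=\text{int}(P\cup A_T)$. As in the proof of Theorem \ref{spofemb}, a simplex-wise normal disc bundle argument constructed inside each $\Psi(S)$, which has room because $\dim\Psi(S)-\dim S\geq 3$, yields a weak equivalence $emb_\Psi(V_T,L)\simeq emb_\Psi(cl(V_T)\cap K^j,L)$. The Isotopy Extension Theorem applied inside each $\text{int}(\Psi(S))$ separately then shows that the restriction map $emb_\Psi(cl(V_T)\cap K^j,L)\to emb_\Psi(cl(V_\emptyset)\cap K^j,L)$ is a fibration, so by \cite[Lemma 1.2]{mancal2} it is enough to show that the $(r{+}1)$-cube of homotopy fibres of these restrictions is sufficiently cartesian.

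The key observation is that, once an embedding $e$ of $cl(V_\emptyset)\cap K^j$ into $L$ is fixed, this cube of fibres decomposes as a product indexed by the $j$-simplices $S_u$ of $K$ meeting $\cup_i A_i$. For each such $S_u$ the factor is a cube of smooth embeddings, rel boundary, of neighbourhoods of the handle cores $A_i^u$ inside the open manifold $\text{int}(\Psi(S_u))\setminus e(cl(V_\emptyset)\cap S_u)$; this is precisely the situation of \cite[Proposition 1.3]{mancal2}, applied to a target of codimension $\dim\Psi(S_u)-\dim S_u\geq 3$. That proposition gives each factor as $\bigl(3-\dim\Psi(S_u)+\sum_i(\dim\Psi(S_u)-q-2)\bigr)$-cartesian, and taking the minimum over $u$ together with $\dim\Psi(S_u)\geq j+3$ produces a uniform bound of the form $c+\sum_i(\rho-q_{A_i})$, which is exactly what the analyticity condition of Remark \ref{bettercondition} demands. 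The separate case (i) of the Remark, where $P=\emptyset$ and each $A_i$ contains only 0-handles, is handled by the same product decomposition combined with the configuration-space style analysis underlying Lemma \ref{configurations}.

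The principal technical obstacle is justifying the product decomposition of the cube of fibres over the $j$-simplices. A member of $emb_\Psi(cl(V_T)\cap K^j,L)$ is a coherent family of embeddings, one per simplex, and a priori the choices inside different $j$-simplices are not independent because they must match on the lower skeleton. What rescues the product structure is that, in Remark \ref{bettercondition}(ii), the handle cores $A_i^u$ lie in the open simplex $\text{op}(S_u)$ and their collars $\text{col}_{V_T}(A_i^u)$ touch no simplex other than $S_u$; hence, after fixing the restriction to $cl(V_\emptyset)\cap K^j$ (which pins down the behaviour on all lower-dimensional simplices and on $\partial S_u$), the remaining degrees of freedom in different $j$-simplices are genuinely independent. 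Once this is in place, the application of \cite[Proposition 1.3]{mancal2} in each $\Psi(S_u)$ and the assembly of the per-simplex estimates follows the same pattern as in Theorem \ref{spofemb}.
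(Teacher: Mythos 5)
The paper offers no independent proof of this theorem: it states only that it ``can be proven in the same way'' as Theorem \ref{spofemb}, so your proposal is in effect an elaboration of exactly the intended argument, and its overall architecture (reduction to $K^j$, Isotopy Extension inside the open target simplices, \cite[Lemma 1.2]{mancal2}, and the disjunction estimate of \cite[Proposition 1.3]{mancal2}) is the right one.

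There is, however, one inaccuracy in the step you single out as the key observation. You claim the cube of homotopy fibres decomposes as a product indexed by the $j$-simplices $S_u$ of $K$ meeting $\cup_i A_i$, with the factors independent because the handle cores live in distinct open simplices. This fails when $\Psi$ is not injective on those simplices: if $\Psi(S_u)=\Psi(S_{u'})$ for $S_u\neq S_{u'}$, an element of $emb_\Psi(-,L)$ is still a global topological embedding, so the images of the pieces coming from $S_u$ and from $S_{u'}$ must be disjoint inside the common target simplex, and the corresponding ``factors'' are not independent. The repair is to index the product by the distinct target simplices $\Psi(S_u)$ instead; within each such target one then has a disjunction problem for several handle cores embedded in a single manifold of dimension at least $j+3$, which is precisely the situation of \cite[Proposition 1.3]{mancal2} (and of the proof of Theorem \ref{spofemb}, where all handles go into the one manifold $N$). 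The connectivity estimate per target simplex is the same expression you wrote, and taking the minimum over targets still yields the required $c+\sum_i(\rho-q_{A_i})$. A similar caution applies to your parenthetical claim that the collars $\text{col}_{V_T}(A_i^u)$ ``touch no simplex other than $S_u$'' --- they do extend into the cofaces of $S_u$ --- but after the restriction to $K^j$ furnished by the analogue of (\ref{verdickung}) only $A_i^u\subset\op(S_u)$ survives, so this does not affect the argument.
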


\subsection{Occupants in simplicial complexes}\label{occsection}

Let $M$ be a smooth manifold without boundary and let $K$ be a subset of $M$. We can ask: Is it possible to recover the homotopy type of $M\setminus K$ from the homotopy types of the spaces $M\setminus T$ where $T$ is a finite subset of $K$? In some cases it is possible if we allow thickenings of the finite subsets $T$ and allow inclusions between them. 

In a joint paper with Michael Weiss \cite{occupants}, we investigated the case where $L$ is a submanifold of a Riemannian manifold $M$ (also with empty boundary) of codimension $\geq 3$. Let $con(L)$ be the configuration category of $L$. The objects of $con(L)$ are pairs $(T,\rho)$ where $T$ is a finite subset of $L$ and $\rho: T\rightarrow (0,\infty)$ is a function which assigns to each element $t\in T$ the radius $\rho(t)$ of the corresponding thickening. These pairs have to fulfil different conditions (for a precise definition, see \cite{occupants}). For each object $(T,\rho)$ in $con(L)$, there exists a corresponding open subset $V_{L}(T,\rho) \subset L$ which is a canonical thickening of the finite subset $T\subset L$. It is a union of the open balls of radius $\rho(t)$ about the points $t\in T$. 
For each element $(T,\rho)$ of the configuration category, we get an inclusion
\[
M\setminus L \rightarrow M\setminus V_{L}(T,\rho)
\]
The main result of \cite{occupants} is the following theorem:

\begin{thm}\label{th1}
In these circumstances, the canonical map
\[
M\setminus L \rightarrow \holimsub{(T,\rho)\in con(L)} M\setminus V_{L}(T,\rho)
\]
is a weak equivalence.
\end{thm}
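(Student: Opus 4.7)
The plan is to derive Theorem \ref{th1} from the convergence theorem (Theorem \ref{maintheorem}, or equivalently its smooth-manifold antecedent; by Theorem \ref{vergltt} it makes no difference whether we view $L$ as a smooth manifold or as a triangulated simplicial complex), applied to the contravariant functor
\[
F : \mathcal{O}(L)\rightarrow (Top),\qquad F(V):=M\setminus V.
\]
After proving the abstract convergence statement for $F$, a cofinality argument replaces the indexing category $\bigcup_k\mathcal{O}k(L)$ by $con(L)$.

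First I would verify that $F$ is good in the sense of Definition \ref{good}. For stratified isotopy invariance, given an isotopy equivalence $U\subset V$ inside $L$, the isotopy extension theorem applied inside a tubular neighbourhood of $L$ in $M$ (which exists because $L$ is a smooth submanifold without boundary) extends the isotopy to an ambient isotopy of $M$, producing a homotopy equivalence $M\setminus V\simeq M\setminus U$. For the limit axiom, one writes
\[
M\setminus\bigcup_i V_i \;=\; \bigcap_i(M\setminus V_i)
\]
and identifies this intersection with $\holim_i\, M\setminus V_i$ via a standard collar/microbundle argument on the sequence of open inclusions $M\setminus V_{i+1}\hookrightarrow M\setminus V_i$.

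The main technical step is to establish $\rho$-analyticity for some integer $\rho>\dim L$; this is precisely where the codimension hypothesis $\dim M-\dim L\geq 3$ enters. Given a compact codim-zero subobject $P\subset L$ and pairwise disjoint codim-zero subobjects $A_0,\ldots,A_r\subset L\setminus\text{int}(P)$ with relative handle indices $q_{A_i}$, one must show the cube
\[
T\mapsto M\setminus\text{int}(P\cup A_T)
\]
is $\bigl(c+\sum_i(\rho-q_{A_i})\bigr)$-cartesian. By transversality each elementary face $M\setminus\text{int}(P\cup A_i)\hookrightarrow M\setminus\text{int}(P)$ is equivalent to removing the normal bundle of a handle core of dimension $q_{A_i}$ in $M$, which has codimension $\dim M-q_{A_i}\geq(\dim M-\dim L)+(\dim L-q_{A_i})\geq 3+(\dim L-q_{A_i})$. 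A multiple Blakers--Massey argument in the style of \cite{goodwillie2} then promotes these individual connectivity estimates to the required cartesianness of the total cube, and the inequality $\dim M-\dim L\geq 3$ is exactly what allows $\rho$ to be chosen strictly greater than $\dim L$.

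Once goodness and analyticity are in hand, Theorem \ref{maintheorem} yields the weak equivalence
\[
M\setminus L \;=\; F(L)\;\simeq\; T_\infty F(L)\;=\;\holimsub{U\in\bigcup_k\mathcal{O}k(L)} M\setminus U.
\]
It remains to identify this with the homotopy limit over $con(L)$ via the functor $\Phi: con(L)\to\bigcup_k\mathcal{O}k(L)$ sending $(T,\rho)\mapsto V_L(T,\rho)$. This $\Phi$ is homotopy cofinal because every special open $U\in\mathcal{O}k(L)$ has at most $k$ components each stratified isotopy equivalent to an open ball in $L$, hence is stratified-isotopic to $V_L(T,\rho)$ for some $(T,\rho)$, with a contractible space of such choices; combined with the isotopy-invariance of $F$ this delivers the stated weak equivalence. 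The hardest step I anticipate is the analyticity estimate: one must organise the Blakers--Massey connectivity arithmetic around handle decompositions of the $A_i$ and verify that the codimension-$3$ hypothesis is sharp. Everything else reduces to isotopy extension and a standard cofinality computation.
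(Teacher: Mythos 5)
A caveat before anything else: this paper does not prove Theorem \ref{th1}. It is quoted as the main result of \cite{occupants} and serves only as motivation for Theorem \ref{th3}; there is no in-paper proof to compare against. What follows assesses your reconstruction on its own terms.

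Your overall strategy --- apply calculus to the good functor $F(V)=M\setminus V$ on $\mathcal{O}(L)$, prove $\rho$-analyticity with $\rho=\dim M-2$ from the codimension hypothesis, invoke convergence, then pass from $\bigcup_k\mathcal{O}k(L)$ to $con(L)$ by a cofinality argument --- is the right shape and is, as far as this paper indicates, the route taken in \cite{occupants}. Two steps are underdeveloped. (1) Analyticity: the content of the "multiple Blakers--Massey argument" is that the cube $T\mapsto W_T:=M\setminus\mathrm{int}(P\cup A_T)$, read against the inclusion order, is strongly cocartesian: since the $A_i$ are pairwise disjoint one has $W_T\cup W_{T'}=W_{T\cap T'}$ and $W_T\cap W_{T'}=W_{T\cup T'}$, and each square is a homotopy pushout of open subsets by excision. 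Each initial map $W_{\{i\}}\hookrightarrow W_\emptyset$ is $(\dim M-q_{A_i}-1)$-connected, being the inclusion of the complement of a thickened $q_{A_i}$-dimensional handle core, so Goodwillie's theorem \cite{goodwillie2} makes the cube $\bigl(1-(r+1)+\sum_i(\dim M-q_{A_i}-1)\bigr)$-cartesian, which equals $1+\sum_i(\rho-q_{A_i})$ with $\rho=\dim M-2$; and $\rho>\dim L$ is exactly codimension $\geq 3$. You should say why the cube is strongly cocartesian; that is the whole point. (2) The cofinality step is the least routine part and you underestimate it: $con(L)$ is not a subcategory of $\mathcal{O}(L)$, its morphisms are not mere inclusions, and "a contractible space of such choices" is not by itself a proof of homotopy cofinality. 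The honest argument is of the same kind as Lemma \ref{configurations} and Theorem \ref{opencover}: identify $|con(L;\leq k)|$ and $|\mathcal{I}k(L)|$ compatibly with $\coprod_{j\leq k}B(L,j)$, use that $F$ takes the relevant morphisms to equivalences, and compare section spaces of the associated fibrations; in \cite{occupants} this comparison is a substantial portion of the paper, not a remark. Finally, if you route the argument through the simplicial-complex version of the convergence theorem, goodness requires a \emph{stratified} isotopy extension theorem, since simplexwise smooth isotopies of opens in a triangulated $L$ are not smooth isotopies of $L$ (the introduction warns exactly about this); it is cleaner to stay in the smooth Goodwillie--Weiss setting throughout, as you suggest parenthetically.
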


The paper also includes many variants of this result, e.g. a variant with restricted cardinalities and we considered manifolds with boundaries and corners. I would like to emphasize the following variant: Let $M$ be a manifold with boundary $\partial M$. Then we want to recover the homotopy type of $\partial M$ from the homotopy types of the spaces $M\setminus T$ where $T$ is a finite subset of $M\setminus \partial M$. Again, we need to allow thickenings of the finite subsets $T$ and inclusions between them. Therefore, we consider the configuration category $con(M\setminus \partial M)$ of the interior of $M$. For each object $(T,\rho)$ in $con(M\setminus \partial M)$, there is a corresponding open set $V(T,\rho)$ defined in the known way and an inclusion
\[
\partial M \rightarrow M\setminus V(T,\rho)
\]

\begin{thm}\label{th2}
The canonical map 
\[
\partial M \rightarrow \holimsub{(T,\rho)\in con(M\setminus\partial M)} M\setminus V(T,\rho)
\]
is a weak equivalence if the following condition holds: There exists a smooth disc bundle $M\rightarrow L$ over a smooth closed manifold $L$ with fibers of dimension $c\geq 3$.
\end{thm}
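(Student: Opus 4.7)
The plan is to deduce Theorem \ref{th2} from Theorem \ref{th1} by taking the zero section of the disc bundle $p : M \to L$ as the submanifold, and then comparing the two homotopy limits. Throughout, write $M^\circ := M\setminus \partial M$ for the interior, which we equip with the restriction of a chosen Riemannian metric on $M$.

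First I would set up the geometric identification. Let $L \hookrightarrow M$ denote the zero section; then $L$ is a closed smooth submanifold of $M^\circ$ of codimension $c \geq 3$. Fiberwise radial retraction, using that each punctured fiber $D^c \setminus \{0\}$ retracts onto its boundary sphere $S^{c-1}$, assembles to a deformation retraction
\[
M^\circ \setminus L \;\simeq\; \partial M.
\]
Any map from $\partial M$ into a target homotopy limit should factor through this equivalence.

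Next I would invoke Theorem \ref{th1} applied to the pair $L \subset M^\circ$: the hypothesis that $\operatorname{codim}(L,M^\circ) = c \geq 3$ is exactly what is needed. This yields a weak equivalence
\[
M^\circ \setminus L \;\xrightarrow{\;\simeq\;}\; \holimsub{(T,\rho)\in con(L)} \bigl(M^\circ \setminus V_L(T,\rho)\bigr),
\]
whose left-hand side is identified with $\partial M$ by the previous step.

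Finally I would identify this homotopy limit with the target homotopy limit over $con(M^\circ)$ from the statement. For each $(T,\rho) \in con(L)$, use the bundle projection to produce a tubular thickening $\widetilde V(T,\rho) \subset M^\circ$, for instance $\widetilde V(T,\rho) := p^{-1}(V_L(T,\rho))$ intersected with a small fiberwise $\varepsilon$-neighborhood of $T$; this gives a functor $\iota : con(L) \to con(M^\circ)$. There is a natural weak equivalence $M^\circ \setminus V_L(T,\rho) \simeq M\setminus \widetilde V(\iota(T,\rho))$, since both sides deformation retract onto $\partial M$ in a compatible way, using the fiberwise retraction of Step 1 combined with the radial retraction removing $V_L$. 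The main obstacle is to show that $\iota$ is homotopy cofinal, so that the induced map on homotopy limits is a weak equivalence. This cofinality should follow from the fact that $p : M^\circ \to L$ is a homotopy equivalence: an arbitrary configuration $(T',\rho') \in con(M^\circ)$ can be pushed down to $(p(T'), \rho'')$ with suitably enlarged radii, landing in the essential image of $\iota$, and the space of such refinements is contractible. Chaining the three steps gives the desired weak equivalence $\partial M \simeq \holim_{con(M^\circ)} M\setminus V(T,\rho)$.
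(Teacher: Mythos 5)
First, a point of order: this paper does not actually prove Theorem \ref{th2}. It is recalled from the earlier joint paper \cite{occupants} as motivation for Theorem \ref{th3}, so there is no in-paper argument to compare yours against; I can only assess your proposal on its own terms.

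Your Steps 1 and 2 are essentially fine (modulo the small point that $\partial M$ is not contained in $M^{\circ}\setminus L$, so one should deformation retract $M\setminus L$ onto $\partial M$ and use that $M^{\circ}\setminus L\hookrightarrow M\setminus L$ is an equivalence; also one must check that the version of Theorem \ref{th1} being invoked applies to the non-compact ambient manifold $M^{\circ}$). The genuine gap is Step 3, which you yourself flag as ``the main obstacle'' and then do not close. The theorem concerns $\holimsub{(T,\rho)\in con(M\setminus\partial M)} M\setminus V(T,\rho)$, whereas Theorem \ref{th1} applied to the zero section only controls $\holimsub{(T,\rho)\in con(L)} M\setminus V_{L}(T,\rho)$; everything hinges on showing that restriction along your functor $\iota : con(L)\rightarrow con(M\setminus\partial M)$ induces an equivalence of homotopy limits. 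Your proposed mechanism --- push a configuration $T'\subset M^{\circ}$ down to $p(T')\subset L$ --- does not obviously produce either a functor or a cofinality witness: two distinct occupants lying in the same fibre of $p$ collide under $p$, so $p(T')$ is in general not a configuration of the same cardinality, and the enlarged radii $\rho''$ cannot be chosen naturally or continuously in $(T',\rho')$. Moreover, the configuration categories here are topological categories, so the homotopy cofinality you need amounts to contractibility of the relevant (topological) comma categories, and ``the space of such refinements is contractible'' is exactly the assertion that requires proof; you also do not pin down which variance of cofinality (homotopy initial, for restricting a homotopy limit) is needed. As written, the proposal reduces the theorem to an unproved comparison of homotopy limits over two different configuration categories, which is where all the real work lies.
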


Now let $K\subset M$ be a simplical complex such that $S$ is smoothly embedded in $M$ for each (closed) simplex $S$ of $K$. We do not go into detail, but there is also a category of canonical thickenings of finite subsets of $K$ - denoted by $con(K)$. The objects of $con(K)$ are again pairs $(T,\rho)$ where $T$ is a finite subset of $K$ and $\rho: T\rightarrow (0,\infty)$ is a function such that some expected conditions hold. We have again corresponding open subsets $V_{K}(T,\rho)$ and inclusion $M\setminus K\rightarrow M\setminus V_{K}(T,\rho)$. In my paper \cite{occupantsinsimplicialcomplexes}, I prove the following generalization of Theorem \ref{th1}:
\begin{thm}\label{th3}
If the codimension of $K$ and $M$ is at least three, the canonical map
\[
M\setminus K \rightarrow \holimsub{(T,\rho)\in con(K)} M\setminus V_{K}(T,\rho)
\]
is a weak equivalence.
\end{thm}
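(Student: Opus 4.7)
The plan is to apply the main convergence theorem (Theorem~\ref{maintheorem}) to the contravariant functor $F : \mathcal{O}(K) \rightarrow (Top)$ defined by $F(V) := M \setminus V$ (with inclusion morphisms sent to inclusions the other way). Under this identification,
\[
T_{\infty}F(K) = \holimsub{U \in \cup_{k} \mathcal{O}k(K)} (M \setminus U),
\]
so two things must be established: first, that $F$ is good and $\rho$-analytic for some $\rho > \dim(K)$, and second, that this homotopy limit agrees with the homotopy limit over $con(K)$ appearing in the statement. Theorem~\ref{maintheorem} then identifies $M \setminus K = F(K)$ with $T_\infty F(K)$, and the identification of the indexing categories finishes the proof.

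For goodness, the (co)limit axiom is immediate because $M \setminus \bigcup_{i} V_{i} = \bigcap_{i} (M \setminus V_{i})$ and the inclusions $M \setminus V_{i+1} \hookrightarrow M \setminus V_{i}$ are cofibrations, so the canonical map to the homotopy limit is a weak equivalence. To see that $F$ takes a stratified isotopy equivalence $i : U \hookrightarrow V$ to a weak equivalence, one promotes the simplexwise isotopies guaranteed by Definition~\ref{isotopyequ} to an ambient isotopy of $M$ by applying the smooth isotopy extension theorem on each closed simplex of $K$ (starting from the top-dimensional strata and extending outward, using that each simplex is smoothly embedded in $M$), and finally extending compatibly into the normal bundles of $K$ inside $M$; the resulting ambient isotopy carries $M \setminus V$ to $M \setminus U$.

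For analyticity, I would verify the weaker condition described in Remark~\ref{bettercondition}, following the strategy of Theorem~\ref{spofemb} and its smooth-manifold analogue \cite[1.4]{mancal2}. Given a compact codimension zero subobject $P$ and pairwise disjoint $A_{0},\dots,A_{r}$ either all contained in a single $j$-skeleton with relative handle index $q_{A_i}$, one replaces each $M \setminus \mathrm{int}(P \cup A_T)$ by the fiber of a restriction-to-tubular-neighborhoods map and invokes the multi-relative Blakers--Massey theorem on the complement. The codimension-$\geq 3$ hypothesis $\dim M - \dim K \geq 3$ provides the base connectivity $\rho \geq \dim(K) + 3$ that Theorem~\ref{maintheorem} requires, with each handle contributing the expected $\rho - q_{A_i}$ amount of cartesian-ness. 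This is the step I expect to be the main obstacle: the cube lives in $M \setminus K$-type spaces rather than in embedding spaces, so one must be careful to set up the normal-disc-bundle argument so that the tubular neighborhoods of the $A_i$ in $M$ fit together across the simplices of $K$ (exactly as in the proof of Theorem~\ref{spofemb}, where compatibility between the normal bundles on adjacent strata is essential).

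It remains to identify $T_\infty F(K)$ with the occupant homotopy limit. I would construct a functor $\Phi : con(K) \rightarrow \cup_{k} \mathcal{O}k(K)$ sending $(T,\rho)$ to its canonical thickening $V_{K}(T,\rho)$, observe that each component of $V_{K}(T,\rho)$ is a thickening of a point of $K$ and is thus stratified isotopy equivalent to the open star $\mathcal{S}_x$ of that point (so $\Phi$ lands in $\cup_k\mathcal{O}k(K)$), and then verify that $\Phi$ is homotopy cofinal. Cofinality reduces to showing that for every special open set $U \in \mathcal{O}k(K)$, the overcategory of pairs $(T,\rho)$ with $V_{K}(T,\rho) \subset U$ has weakly contractible nerve; this follows because one can shrink $\rho$ continuously and pick centers inside each component of $U$, giving essentially the same configuration-category structure used on the right-hand side of Theorem~\ref{th1}. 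Goodness of $F$ then ensures that the homotopy limits along $\Phi$ agree, completing the identification and hence the proof.
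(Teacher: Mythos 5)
A preliminary but important point: the paper does not actually prove Theorem \ref{th3}; it is quoted from the companion paper \cite{occupantsinsimplicialcomplexes} (``in preparation''), so there is no in-paper argument to measure yours against. Your proposal reconstructs what is clearly the intended strategy --- apply Theorem \ref{maintheorem} to the good, analytic contravariant functor $F(V)=M\setminus V$ and then identify $T_\infty F(K)$ with the homotopy limit over $con(K)$ --- and the outline is sound. Three steps, however, are either wrong as written or are asserted where the real work lies. First, your verification of the limit axiom (Definition \ref{good}(ii)) does not work: that the inclusions $M\setminus V_{i+1}\hookrightarrow M\setminus V_{i}$ are cofibrations is irrelevant for a homotopy \emph{limit} of a tower; one would want the maps to be fibrations, which inclusions essentially never are. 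One must argue directly, as in \cite[Prop.~1.4]{mancal1} or \cite{occupants}, that $\bigcap_i(M\setminus V_i)\to\holim_i(M\setminus V_i)$ is a weak equivalence. (Similarly, the simplexwise isotopy extension should proceed from the lowest-dimensional strata upward, not ``from the top-dimensional strata outward''.)

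Second, the numerology in the analyticity step signals that the Blakers--Massey bookkeeping was not carried out: regluing a thickened $q$-handle of some $A_i$ into the complement is an $(m-q-1)$-connected cofibration ($m=\dim M$), so the dual Blakers--Massey theorem makes $F$ $\rho$-analytic with $\rho=m-2$, and codimension $\geq 3$ gives exactly $\rho\geq\dim K+1>\dim K$ --- not ``$\rho\geq\dim K+3$''. This is harmless for invoking Theorem \ref{maintheorem}, but the reduction to the situation of Remark \ref{bettercondition} and the compatibility of the normal disc bundles across adjacent strata are precisely where care is needed, as in the proof of Theorem \ref{spofemb}. Third, and most substantially, the comparison of $\holim$ over $\bigcup_k\mathcal{O}k(K)$ with $\holim$ over $con(K)$ is the genuine content that \cite{occupantsinsimplicialcomplexes} must supply beyond this paper's machinery: ``shrink $\rho$ and pick centers'' does not by itself establish the contractibility of the relevant comma categories (the morphisms of $con(K)$ and of $\mathcal{O}k(K)$ are of rather different nature), and one must also check that $F$ sends the morphisms in the image of $con(K)$ to weak equivalences in order to transport the homotopy limit. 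So the proposal is the right plan, but at these three points it is a plan rather than a proof.
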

We can use this theorem to weaken the conditions in Theorem \ref{th2}. The canonical map in \ref{th2} is a weak equivalence if $M$ is a regular neighbourhood of a compact simplicial complex of codimension $c\geq 3$.

\section{Appendix}

\subsection{Theorems for the homotopy (co-)limit}

Let $\mathcal{S}$ be the category of topological spaces or simplicial sets. The following two propositions are proven in \cite[8.6]{mancal1}.

\begin{prop}\label{quasifibration}
Let $C$ be a small category and $F:C\rightarrow \mathcal{S}$ be a functor which takes all morphisms in $C$ to homotopy equivalences. Then the canonical map 
\begin{align*}
\hocolim_{C} F \hspace{0,15cm}\rightarrow \hspace{0,15cm}\left| C\right|
\end{align*}
is a quasifibration.
\end{prop}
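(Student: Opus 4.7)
The plan is to apply Dold's theorem on quasifibrations: if $p : E \to B$ admits a numerable open cover $\{U_\alpha\}$ of $B$ such that each $p^{-1}(U_\alpha) \to U_\alpha$ is a quasifibration and each fiber inclusion $p^{-1}(b) \hookrightarrow p^{-1}(U_\alpha)$ (for $b \in U_\alpha$) is a weak equivalence, then $p$ itself is a quasifibration. So I would manufacture such a cover of $|C|$ from the combinatorics of the nerve.

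First, I would recall that $\hocolim_C F$ is the geometric realization of the simplicial space whose $p$-simplices are $\coprod_{c_0 \to \cdots \to c_p} F(c_0)$, and under this identification the canonical map to $|C|$ is just the projection forgetting the $F(c_0)$-factor. In particular, over the interior of a nondegenerate $p$-simplex $\sigma = (c_0 \to \cdots \to c_p)$ the map is (set-theoretically) the projection $F(c_0) \times \mathring{\Delta}^p \to \mathring{\Delta}^p$.

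Second, for each object $c$ of $C$, I would let $U_c \subset |C|$ denote the open subset consisting of all points lying in the interior of a simplex $(c_0 \to \cdots \to c_p)$ with $c_0 = c$. Together the $U_c$ form a numerable open cover of $|C|$ (this is essentially the open star of $c$ viewed as the initial vertex of chains). I would then construct a strong deformation retraction $H_t : U_c \to U_c$ onto $\{c\}$ by linearly collapsing each simplex to its initial vertex. The key point is that $H_t$ lifts canonically to a deformation retraction $\widetilde H_t$ of $p^{-1}(U_c)$ onto the fiber $F(c)$ over $c$: on the slice $F(c_0) \times \mathring{\Delta}^p$ the lift is just the identity on the $F(c_0)$-coordinate together with $H_t$ on the simplex, but to make it continuous across face relations one has to use the simplicial structure maps, which on the total space are given by $F$ applied to the composites $c_0 \to c_i$. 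Because $F$ sends all morphisms to homotopy equivalences, these identifications preserve fiber homotopy type.

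Third, using $\widetilde H_t$ I would conclude that the fiber inclusion $F(c) \hookrightarrow p^{-1}(U_c)$ is a homotopy equivalence, and by the same deformation the restriction $p^{-1}(U_c) \to U_c$ is fiber-homotopy equivalent over $U_c$ to the trivial bundle $F(c) \times U_c \to U_c$; in particular it is a quasifibration. To invoke Dold I also need the fiber condition over \emph{every} point of $U_c$: but any point $b \in U_c$ has fiber $F(c_0)$ for some $c_0$ reachable from $c$ by a morphism, and the resulting map $F(c) \to F(c_0)$ is $F$ applied to that morphism, hence a homotopy equivalence by hypothesis.

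The main obstacle is step two: constructing the lift $\widetilde H_t$ so that it is continuous on $p^{-1}(U_c)$, which requires checking compatibility with the degeneracy and face identifications used to glue the geometric realization of the simplicial space $\hocolim_C F$. Once this is verified, Dold's theorem applies and yields the quasifibration conclusion directly.
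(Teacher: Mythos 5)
The paper does not actually prove this proposition; it only cites \cite[8.6]{mancal1}. So your argument has to stand on its own, and as written it has a genuine gap at the very first step: the sets $U_c$ are not open. Every point of $\left|C\right|$ lies in the interior of a unique nondegenerate simplex of the nerve, and that simplex has a unique initial vertex, so the $U_c$ form a \emph{partition} of $\left|C\right|$; a partition of a connected space into nonempty open sets is impossible unless it has only one piece. Concretely, for $C$ the category $a\to b$ with one non-identity morphism, $\left|C\right|=\Delta^{1}$ and $U_b=\{b\}$ is not open. Hence Dold's patching theorem cannot be applied to this cover. The pleasant feature you exploit in step two --- that the lift $\widetilde H_t$ is simply the identity on the $F$-coordinate because the initial vertex never changes inside $U_c$, and because the only face identifications occurring within $p^{-1}(U_c)$ are $d_i$ with $i\geq 1$, which do not touch the fiber --- is bought precisely at the cost of openness.

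The standard repair is to take $U_c$ to be the honest open star of $c$ (all points with positive barycentric coordinate at some occurrence of $c$). But then the fibers over $U_c$ are the various $F(c_0)$ with a morphism $c_0\to c$, the terminal stage of the lifted deformation must apply $F(c_0\to c)$, and continuity of the lift across the $d_0$-identifications (which change the initial vertex and act by $F$ on the fibers) is exactly the point you defer to the end; that verification is the crux of the whole argument, not a routine check. There is also an internal tension in your write-up that signals this conflation: in step two all fibers over $U_c$ are $F(c)$, while in step three you compare fibers $F(c_0)$ for varying $c_0$ (and the comparison map goes $F(c_0)\to F(c)$, not the other way). Separately, the claim that $p^{-1}(U_c)\to U_c$ is fiber-homotopy trivial over $U_c$ is stronger than what a deformation retraction onto one fiber gives and is not needed: the Dold--Thom deformation criterion only requires a deformation of $p^{-1}(U_c)$ covering one of $U_c$ into a distinguished subset, inducing weak equivalences on fibers. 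A cleaner route, closer to the cited source, is to apply the Dold--Thom criteria to the skeletal filtration of $\left|C\right|$: over each open cell the map is a product projection, and a neighbourhood of the $(n-1)$-skeleton inside the $n$-skeleton deformation retracts compatibly with a lift that induces $F(\text{morphism})$ on fibers, a homotopy equivalence by hypothesis.
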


\begin{prop}\label{holimassfib}
Let $C$ be a small category and $F:C\rightarrow \mathcal{S}$ be a functor which takes all morphisms in $C$ to homotopy equivalences. Then there is a homotopy equivalence between $\holim_{C} F$ and the section space of the associated fibration of the quasifibration $\hocolim_{C} F \rightarrow \left| C\right|$.
\end{prop}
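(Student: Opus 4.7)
The plan is to construct a natural comparison map $\Phi: \holim_{C} F \to \text{Sec}(\tilde{p})$ and verify it is a weak equivalence. Unwinding the cosimplicial definition, a point of $\holim_{C} F$ is a compatible family of continuous maps $\phi_\sigma: \Delta^p \to F(c_p)$ indexed by $p$-chains $\sigma = (c_0 \to c_1 \to \cdots \to c_p)$ in $C$. Since $\hocolim_{C} F$ is assembled by gluing the spaces $\Delta^p \times F(c_p)$ along the same combinatorial data that produces $|C|$ from its simplices, any such family assembles into a genuine continuous section of the canonical projection $p: \hocolim_{C} F \to |C|$. This yields a tautological map $\Phi: \holim_{C} F \to \text{Sec}(p)$ that is a homeomorphism whenever $p$ happens to be a Serre fibration.

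In our situation, Proposition \ref{quasifibration} tells us that $p$ is only a quasifibration, so we pass to the associated (Hurewicz) fibration $\tilde{p}: \tilde{E} \to |C|$ and compose to obtain $\tilde{\Phi}: \holim_{C} F \to \text{Sec}(\tilde{p})$. I would check that $\tilde{\Phi}$ is a weak equivalence by induction over the skeleta $|C|^{(n)}$. Both the cosimplicial totalization computing $\holim_{C} F$ and the restricted section spaces $\text{Sec}(\tilde{p}|_{|C|^{(n)}})$ admit compatible towers of fibrations whose layers are mapping spaces involving the values $F(c_n)$ for non-degenerate $n$-chains of $NC$; the quasifibration property guarantees that each fiber of $\tilde{p}$ is weakly equivalent to the corresponding $F(c)$, so $\tilde{\Phi}$ induces an equivalence on successive layers, and a standard $\lim^{1}$ argument promotes this to an equivalence on the inverse limit over $n$.

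The main obstacle is the bookkeeping between the cosimplicial tower for $\holim_{C} F$ and the cellular tower for $\text{Sec}(\tilde{p})$, together with tracking the weak equivalences between fibers of the quasifibration $p$ and those of its associated fibration. A cleaner route would be to first replace $F$ by an objectwise-fibrant functor $\hat{F}$ (say in the projective model structure on $\mathcal{S}^{C}$): then $\hocolim_{C}\hat{F} \to |C|$ is already a genuine fibration, so $\Phi$ itself is a homeomorphism onto its section space. Since $F \to \hat{F}$ is a levelwise weak equivalence, $\holim_{C} F \simeq \holim_{C} \hat{F}$ and $\hocolim_{C} F \to \hocolim_{C}\hat{F}$ is a weak equivalence over $|C|$; the comparison of associated fibrations then reduces the general statement to the fibrant case.
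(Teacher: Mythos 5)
The paper does not actually prove this proposition; it is quoted from \cite[8.6]{mancal1}, and the engine of Weiss's argument there is indeed the skeleton-by-skeleton comparison of towers that you sketch in your second paragraph. The genuine gap in your write-up is that the comparison map you start from does not exist as described. With the standard conventions, the fiber of $p:\hocolim_{C}F\rightarrow\left|C\right|$ over an interior point of the cell indexed by a nondegenerate chain $\sigma=(c_{0}\rightarrow\cdots\rightarrow c_{p})$ is $F(c_{0})$, whereas the $\Tot$-datum $\phi_{\sigma}$ takes values in $F(c_{p})$: the two constructions use \emph{opposite} ends of the chain. Already for $C=(a\rightarrow b)$ one sees the problem: $\hocolim_{C}F$ is the mapping cylinder of $f:F(a)\rightarrow F(b)$, so a section over the interior of the edge must take values in $F(a)\times\{t\}$, but a point of $\holim_{C}F$ only supplies $x_{a}$, $x_{b}$ and a path in $F(b)$ from $f(x_{a})$ to $x_{b}$. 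Hence there is no ``tautological'' section, and the assertion that $\Phi$ is a homeomorphism when $p$ is a fibration is unfounded. A correct comparison map can be built into the \emph{associated} fibration --- for instance, send the point of $\left|C\right|$ with barycentric coordinate $t$ in the cell of $\sigma$ to the pair consisting of $\phi_{\sigma}(t)\in F(c_{p})\subset\hocolim_{C}F$ (sitting over the last vertex of $\sigma$) and the linear path in $\Delta^{p}$ from that vertex to $t$ --- and the hypothesis that $F$ inverts all morphisms is then exactly what makes the induced maps on the layers of the two towers equivalences. Constructing and checking this map is the real content; the subsequent tower comparison is, as you say, bookkeeping plus a $\lim^{1}$ argument.

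Your proposed ``cleaner route'' rests on a false premise and should be discarded. Fibrancy in the projective model structure on $\mathcal{S}^{C}$ is objectwise fibrancy and has no bearing on whether $\hocolim_{C}\hat{F}\rightarrow\left|C\right|$ is a fibration: in $(Top)$ every diagram is already objectwise fibrant, yet the projection is in general not even a quasifibration unless $F$ sends all morphisms to weak equivalences --- that is precisely the point of Proposition \ref{quasifibration}. So there is no reduction to a ``fibrant case'' in which $\Phi$ becomes a homeomorphism onto the section space; the proof has to go through the quasifibration and its associated fibration with an honestly constructed comparison map.
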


\begin{prop}\label{homequsmcat}
Let $F:J\rightarrow \mathcal{S}$ be a functor which takes all morphisms to weak equivalences. If $i: I\rightarrow J$ is an inclusion of small categories such that $\left| I \right| \rightarrow \left| J \right|$ is a homotopy equivalence, then we have a weak equivalence 
\[
\hocolim_{I} F\circ i^{*}
\rightarrow
\hocolim_{J} F
\]
\end{prop}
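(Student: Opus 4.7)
The plan is to compare the two homotopy colimits by viewing each as the total space of a quasifibration over its classifying space via Proposition \ref{quasifibration}, and then to reduce the claim to the homotopy equivalence on bases together with an identification of fibers.

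Form the commutative square
\begin{equation*}
\begin{gathered}
\xymatrix{
\hocolimsub{I} F\circ i \ar[d] \ar[r] & \hocolimsub{J} F \ar[d] \\
\left|I\right| \ar[r]^{|i|} & \left|J\right|
}
\end{gathered}
\end{equation*}
in which the vertical maps are the canonical projections from the Bousfield--Kan homotopy colimit onto the nerve and the top map is induced by $i$. Since $i$ is an inclusion of categories, every morphism of $I$ is a morphism of $J$, so $F\circ i$ also takes every morphism to a weak equivalence; Proposition \ref{quasifibration} then implies that both vertical maps are quasifibrations. The bottom horizontal map is a homotopy equivalence by assumption.

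The key step is to identify the fibers. In the standard model, the open cell of $\hocolimsub{I} F\circ i$ lying over the open cell of $|I|$ indexed by a nondegenerate chain $i_0\to\cdots\to i_p$ is $F(i(i_0))\times\mathrm{int}(\Delta^p)$, with the projection being the second-factor projection; the fiber over any interior point of that cell is therefore $F(i(i_0))$. The image of this cell under $|i|$ lies in the open cell of $|J|$ indexed by $i(i_0)\to\cdots\to i(i_p)$, and the fiber of the right-hand quasifibration over that point is again $F(i(i_0))$. The induced map on fibers is therefore the identity.

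The conclusion then follows by comparing the long exact sequences of homotopy groups of the two quasifibrations and applying the five-lemma: $|i|$ induces isomorphisms on all homotopy groups of the base and the fiber map is the identity, so the top horizontal map induces isomorphisms on all homotopy groups of the total spaces, i.e.\ is a weak equivalence. The main technical point, which I expect to be routine rather than a genuine obstacle, is bookkeeping with basepoints on possibly disconnected spaces; this is handled componentwise, picking in each component a basepoint in the interior of some open cell so that the fiber identification above applies directly.
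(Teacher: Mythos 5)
Your argument is correct and is essentially the paper's own proof: both compare the two homotopy colimits as quasifibrations over $|I|$ and $|J|$ via Proposition \ref{quasifibration}, identify the (homotopy) fibers over a point of $|I|$, and conclude with the five lemma applied to the long exact sequences. Your write-up just spells out the cellwise fiber identification and the basepoint bookkeeping a bit more explicitly.
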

\begin{proof}
Let $x$ be an element of $\left| I \right|$. Using the inclusion $\left| I \right| \rightarrow \left| J \right|$, we can also consider $x$ as an element of $\left| I \right|$. The fibers under the projection maps $\text{hocolim}_{J} F \rightarrow \left| J \right|$ and $\text{hocolim}_{I} F\circ i^{*} \rightarrow \left| I \right|$ of $x$ coincide. By Proposition \ref{quasifibration}, the homotopy fibers also coincide (up to homotopy). Then the assertion follows from the Five lemma (compare the long exact fiber sequences).
\end{proof}

\begin{thm}\label{alongfib}
Suppose we have a pullback square 
\begin{equation*}
\begin{gathered}
\xymatrix{
A \ar[d] \ar[r]^{f} & B \ar[d] \\
C \ar[r]^{g} & D
}
\end{gathered}
\end{equation*}
where $g$ is an $n$-connected Serre fibration. Then the map $f$ is also $n$-connected.
\end{thm}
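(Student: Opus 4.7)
The plan is to reduce the question to an assertion about homotopy fibers by exploiting the hypothesis that $g$ is a Serre fibration. Recall that a map is $n$-connected precisely when its homotopy fibers (over every basepoint) are $(n-1)$-connected, and for a Serre fibration the homotopy fiber agrees with the honest fiber up to weak equivalence. So I aim to show that each honest fiber of $f$ is $(n-1)$-connected, and then invoke the long exact sequence of homotopy groups.

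First I would use the standard fact that a pullback of a Serre fibration along any map is again a Serre fibration; this is a direct diagram chase with the defining lifting property against $D^k \times \{0\} \hookrightarrow D^k \times [0,1]$. Applied to the given square, $f:A\to B$ is a Serre fibration. Next I would identify fibers: for $b\in B$ with image $d = g\text{-target of }b \in D$, the definition of the pullback gives a canonical homeomorphism between $f^{-1}(b)$ and $g^{-1}(d)$. By hypothesis $g^{-1}(d)$ is $(n-1)$-connected (since $g$ is an $n$-connected Serre fibration), so the same holds for every fiber of $f$.

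Finally I would run the long exact sequence of homotopy groups associated to the Serre fibration $f$, with a chosen basepoint $a\in A$, $b=f(a)\in B$, and fiber $F=f^{-1}(b)$:
\[
\cdots \to \pi_{k}(F) \to \pi_{k}(A) \to \pi_{k}(B) \to \pi_{k-1}(F) \to \cdots .
\]
Since $\pi_{i}(F)=0$ for $i\leq n-1$, exactness immediately gives that $\pi_{k}(A)\to\pi_{k}(B)$ is an isomorphism for $k\leq n-1$ and a surjection for $k=n$, which is precisely the statement that $f$ is $n$-connected. A brief remark handles the fact that $B$ may have several path components: one only needs to check the conclusion on each component, which is done by choosing a basepoint in each.

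There is no real obstacle here; the only mild subtlety is the indexing convention between "$n$-connected map" and "$(n-1)$-connected homotopy fiber," which one must track carefully in the long exact sequence to avoid an off-by-one error.
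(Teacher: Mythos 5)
Your argument is correct, and it is more hands-on than the one in the paper. The paper disposes of the statement in two lines by citing the model-categorical fact that a strict pullback along a fibration is a homotopy pullback (Hirschhorn, 13.3), whence $f$ and $g$ have weakly equivalent homotopy fibers and the connectivity transfers. You instead stay entirely at the point-set level: pullbacks of Serre fibrations are Serre fibrations, the strict fiber of $f$ over $b$ is homeomorphic to the strict fiber of $g$ over the image of $b$, and the long exact sequence of the fibration $f$ converts $(n-1)$-connectivity of the fibers into $n$-connectivity of the map. Both proofs pivot on the same identification of fibers; yours buys self-containedness and avoids any appeal to homotopy pullback machinery, at the cost of having to track the $\pi_0$ bookkeeping yourself. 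On that point, one small item worth making explicit: to know that the fiber $g^{-1}(d)$ over the relevant point $d$ is nonempty (so that the homotopy fiber agrees with it and the long exact sequence applies, and so that $f$ is surjective on $\pi_0$), you should note that an $n$-connected map with $n\geq 0$ is $\pi_0$-surjective and that the image of a Serre fibration is a union of path components of the base, hence $g$ is onto. With that remark added, the proof is complete.
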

\begin{proof}
Since $g$ is a fibration, the pullback square is also a homotopy pullback square \cite[13.3]{Hirschhorn}. Therefore, the map $f$ is also $n$-connected.
\end{proof}

\end{document}